\documentclass[11pt]{amsart}

%
%

\usepackage[colorlinks=true, pdfstartview=FitV, linkcolor=blue, citecolor=blue, urlcolor=blue, breaklinks=true]{hyperref}
\usepackage{amsmath,amsfonts,amssymb,amsthm,amscd,comment,paralist,etoolbox,mathtools,enumitem}
\usepackage[usenames,dvipsnames]{color}
\usepackage{mdwlist}


%
%

\makeatletter
\def\namedlabel#1#2{\begingroup
    #2%
    \def\@currentlabel{#2}%
    \phantomsection\label{#1}\endgroup
}

%
%

\leftmargin=0in
\topmargin=0pt
\headheight=0pt
\oddsidemargin=0in
\evensidemargin=0in
\textheight=8.75in
\textwidth=6.5in
\parindent=0.5cm
\headsep=0.25in
\widowpenalty10000
\clubpenalty10000

%
%

\newcommand\C{\mathbb{C}}
\newcommand\Z{\mathbb{Z}}

\newcommand\N{\mathbb{N}}
\newcommand\F{\mathbb{F}}

\newcommand\id{\mathrm{Id}}

\newcommand\cB{\mathcal{B}}

\newcommand\tr{\mathrm{tr}}

\newcommand{\md}{\textup{-mod}}
\newcommand{\rmd}{\textup{-rmod}}
\newcommand{\rR}{\textup{R}}
\newcommand{\rL}{\textup{L}}

\newcommand\pl[1]{\prescript{\ell}{}{#1}}
\newcommand\pr[1]{\prescript{r}{}{#1}}

\newcommand\twBA{\prescript{\beta}{B}{A}_A^{\alpha}}

\newcommand\ts{}

%
%

\DeclareMathOperator{\im}{im} 
\DeclareMathOperator{\Hom}{Hom}
\DeclareMathOperator{\HOM}{HOM}

%
%

\newtheorem{theo}{Theorem}[section]
\newtheorem{prop}[theo]{Proposition}
\newtheorem{lem}[theo]{Lemma}
\newtheorem{cor}[theo]{Corollary}

\theoremstyle{definition}
\newtheorem{defin}[theo]{Definition}
\newtheorem{rem}[theo]{Remark}
\newtheorem{eg}[theo]{Example}

\numberwithin{equation}{section}
\allowdisplaybreaks

\setcounter{tocdepth}{1}

%
%

\newtoggle{comments}
\newtoggle{details}
\newtoggle{prelimnote}
\newtoggle{detailsnote}




\toggletrue{detailsnote}   

\iftoggle{comments}{%
  \newcommand{\comments}[1]{
    \begin{center}
      \parbox{6.5 in}{
        \color{red}
          {\footnotesize \textbf{Comments:} #1}
        \color{black}}
    \end{center}}
}{%
  \newcommand{\comments}[1]{}
}

\iftoggle{details}{%
  \newcommand{\details}[1]{
      \ \\
      \color{OliveGreen}
        {\footnotesize \textbf{Details:} #1}
      \color{black}
      \\
  }
}{%
  \newcommand{\details}[1]{}
}

\iftoggle{prelimnote}{%
  \newcommand{\prelim}{\textsc{Preliminary version} \bigskip}
}{%
  \newcommand{\prelim}{}
}

%
\begin{document}
%

\title{Twisted Frobenius extensions of graded superrings}

\author{Jeffrey Pike}
\address{J.~Pike: Department of Mathematics and Statistics, University of Ottawa}
\email{jpike061@uottawa.ca}

\author{Alistair Savage}
\address{A.~Savage: Department of Mathematics and Statistics, University of Ottawa}
\urladdr{\url{http://AlistairSavage.ca}}
\email{alistair.savage@uottawa.ca}
\thanks{The second author was supported by a Discovery Grant from the Natural Sciences and Engineering Research Council of Canada.  The first author was supported by the Discovery Grant of the second author.}

\begin{abstract}
  We define twisted Frobenius extensions of graded superrings.  We develop equivalent definitions in terms of bimodule isomorphisms, trace maps, bilinear forms, and dual sets of generators.  The motivation for our study comes from categorification, where one is often interested in the adjointness properties of induction and restriction functors.  We show that $A$ is a twisted Frobenius extension of $B$ if and only if induction of $B$-modules to $A$-modules is twisted shifted right adjoint to restriction of $A$-modules to $B$-modules.  A large (non-exhaustive) class of examples is given by the fact that any time $A$ is a Frobenius graded superalgebra, $B$ is a graded subalgebra that is also a Frobenius graded superalgebra, and $A$ is projective as a left $B$-module, then $A$ is a twisted Frobenius extension of $B$.
\end{abstract}

\subjclass[2010]{17A70, 16W50}
\keywords{Frobenius extension, Frobenius algebra, graded superring, graded superalgebra, induction, restriction, adjuction, adjoint functors}

\prelim

\maketitle
\thispagestyle{empty}

\tableofcontents

%
\section{Introduction}
%

Frobenius algebras and their natural generalization, Frobenius extensions, have played an increasingly important role in many areas of mathematics.  For example, commutative Frobenius algebras are the same as two-dimensional topological quantum field theories.  More recently, Frobenius algebras and Frobenius extensions have become ubiquitous in the field of categorification.  Many constructions in categorical representation theory involve categories of modules over Frobenius (graded super)algebras.  There it is important that one has nice adjointness properties of the functors of induction and restriction between categories of modules over a ring and modules over a subring.  In particular, if $B$ is a subring of $A$, then induction from the category of $B$-modules to the category of $A$-modules is always left adjoint to restriction from the category of $A$-modules to the category of $B$-modules.  It is also right adjoint \emph{precisely} when $A$ is a Frobenius extension of $B$ (see \cite[Th.~5.1]{Mor65}).  However, there are many important situations when something weaker happens.  For example, if $A$ is a Frobenius algebra, $B$ is a subalgebra of $A$ that is also a Frobenius algebra, and $A$ is projective as a left $B$-module, then induction is \emph{twisted} right adjoint to restriction.  This follows, for example, from \cite[Lem.~1]{Kho01}, or see \cite[Prop.~6.7]{RS15} for the statement in the graded super setting of the current paper.  However, it turns out that the converse does not hold.  It is thus natural to ask what conditions on $A$ and $B$ correspond precisely to induction being twisted right adjoint to restriction.  (See Theorem~\ref{theo:twisted-adjointness} for the type of twisted adjointness we consider in the current paper.)  It follows from work of Morita (\cite{Mor65}) that the correct notion is that of an $(\alpha,\beta)$-Frobenius extension.

The goal of the current paper is to develop the theory of $(\alpha,\beta)$-Frobenius extensions (which we also call \emph{twisted} Frobenius extensions) in detail, with an aim towards applications to categorification.  Since, in categorical representation theory, one often wants to work in the setting of graded superrings, we adopt that generality here.  After recalling some conventions and basic facts about graded superrings in Section~\ref{sec:graded-superrings}, we introduce the notion of twisted Frobenius extensions in Section~\ref{sec:Frob-ext}.  The definition involves a certain isomorphism of bimodules.  In the case that all gradings are trivial (i.e.\ the non-graded non-super setting), our definition recovers that of \cite{Mor65}.  We prove that one has equivalent left and right formulations of the definition, and then examine the uniqueness of the data involved in the definition.  In Section~\ref{sec:Bil-Form} we give an alternative definition of twisted Frobenius extensions in terms of trace maps, bilinear forms, and dual generating sets.  We again answer the question of the uniqueness of this data.  In Section~\ref{sec:Nakayama} we define Nakayama automorphisms for twisted Frobenius extensions.  These generalize the usual Nakayama automorphisms for Frobenius extensions.

In Section~\ref{sec:adjoints} we examine one of the main motivations for the concept of twisted Frobenius extensions.  Namely, we show (Theorem~\ref{theo:twisted-adjointness}) that if $A$ is a graded superring and $B$ is a graded subring, then induction is twisted shifted right adjoint to restriction if and only if $A$ is a twisted Frobenius extension of $B$.  We conclude, in Section~\ref{sec:examples}, with some examples.  We prove (Corollary~\ref{cor:Frobalg-implies-Frobext}) that if $A$ is a Frobenius graded superalgebra, $B$ is a graded subalgebra of $A$ that is itself a Frobenius graded superalgebra, and $A$ is projective as a left $B$-module, then $A$ is a twisted Frobenius extension of $B$.  This explains the adjointness properties in this setting mentioned in the first paragraph of this introduction.  We conclude the paper with an example of a twisted Frobenius extension that is neither a usual Frobenius extension nor a Frobenius extension of the 2nd kind in the sense of \cite{NT60}.


\iftoggle{detailsnote}{
\medskip

\paragraph{\textbf{Note on the arXiv version}} For the interested reader, the tex file of the arXiv version of this paper includes hidden details of some straightforward computations and arguments that are omitted in the pdf file.  These details can be displayed by switching the \texttt{details} toggle to true in the tex file and recompiling.
}{}

\subsection*{Acknowledgements}

The authors would like to thank D.~Rosso for useful conversations and V.~Ostrik for pointing out the example of Remark~\ref{rem:non-proj-eg}.

%
\section{Graded superrings} \label{sec:graded-superrings}
%

Throughout this paper we fix an abelian group $\Lambda$ and by \emph{graded}, we mean $\Lambda$-graded.  In particular, a graded superring is a $\Lambda \times \Z_2$-graded ring.  So if $A$ is a graded superring, then we have
\[
  A = \bigoplus_{\mu \in \Lambda,\, \gamma \in \Z_2} A_{\mu,\gamma},\quad A_{\mu,\gamma} A_{\nu,\delta} \subseteq A_{\mu+\nu,\gamma+\delta},\quad \mu,\nu \in \Lambda,\, \gamma, \delta \in \Z_2.
\]
For the remainder of this section, $A$ will denote a graded superring.  To avoid repeated use of the modifiers ``graded'' and ``super'', from now on we will use the term \emph{ring} to mean graded superring and \emph{subring} to mean graded sub-superring.  Similarly, by an automorphism of a ring, we mean an automorphism as graded superrings.

We will use the term \emph{module} to mean graded supermodule.  In particular, a left $A$-module $M$ is a $(\Lambda \times\Z_2)$-graded abelian group with a left $A$-action such that
\[
  A_{\mu,\gamma} M_{\nu,\delta} \subseteq M_{\mu+\nu,\gamma+\delta}, \quad \mu,\nu \in \Lambda,\ \gamma,\delta \in \Z_2,
\]
and similarly for right modules.  If $v$ is a homogeneous element in a $\Lambda$-graded (resp.\ $\Z_2$-graded) ring or module, we will denote by $|v|$ (resp.\ $\bar v$) its degree.  Whenever we write an expression involving degrees of elements, we will implicitly assume that such elements are homogeneous.

For $M$, $N$ two $\Lambda \times \Z_2$-graded abelian groups, we define a $\Lambda \times \Z_2$-grading on the space $\HOM_\Z(M,N)$ of all $\Z$-linear maps by setting $\HOM_\Z(M,N)_{\mu,\gamma}$, $\mu \in \Lambda$, $\gamma \in \Z_2$, to be the subspace of all homogeneous maps of degree $(\mu,\gamma)$. That is,
\[
  \HOM_\Z(M,N)_{\mu,\gamma} := \{f \in \HOM_\Z(M,N)\ |\  f(M_{\nu,\delta}) \subseteq N_{\mu+\nu,\gamma+\delta} \ \forall\ \nu \in \Lambda,\ \delta \in \Z_2\}.
\]

We let $A\md$ denote the category of left $A$-modules, with set of morphisms from $M$ to $N$ given by
\[
  \Hom_A^\rL(M,N) = \{f \in \HOM_\Z(M,N)_{(0,0)}\ |\ f(am) = af(m) \ \forall\ a \in A,\ m \in M\}.
\]
Similarly, we let $A\rmd$ denote the category of right $A$-modules, with set of morphisms from $M$ to $N$ given by
\[
  \Hom_A^\rR(M,N) = \{f \in \HOM_\Z(M,N)_{(0,0)}\ |\ f(ma) = f(m)a \ \forall\ a \in A,\ m \in M\}.
\]
For $\mu \in \Lambda$ and $\gamma \in \Z_2$, we have the shift functors
\begin{gather*}
  \{\mu,\gamma\} \colon A\md \to A\md,\quad M \mapsto \{\mu,\gamma\}M, \\
  \{\mu,\gamma\} \colon A\rmd \to A\rmd,\quad M \mapsto \{\mu,\gamma\}M.
\end{gather*}
Here $\{\mu,\gamma\}M$ denotes the $\Lambda \times \Z_2$-graded abelian group that has the same underlying abelian group as $M$, but a new grading given by $(\{\mu,\gamma\}M)_{\nu,\delta}=M_{\nu-\mu,\delta+\gamma}$.  Abusing notation, we will also sometimes use $\{\mu,\gamma\}$ to denote the map $M \to \{\mu,\gamma\}M$ that is the identity on elements of $M$.  For $M \in A\md$, the left action of $A$ on $\{\mu,\gamma\}M$ is given by $a\cdot \{\mu,\gamma\} m= (-1)^{\gamma \bar a} \{\mu,\gamma\}am$, $a \in A$, $m \in M$, where $am$ is the action on $M$.  For $A \in A\rmd$, the right action of $A$ on $\{\mu,\gamma\}M$ is given by $(\{\mu,\gamma\} m) \cdot a = \{\mu,\gamma\}(ma)$, $a \in A$, $m \in M$.  The shift functors leave morphisms unchanged.  Note that we have chosen to write the shift $\{\mu,\gamma\}$ on the left since it commutes with right actions, but not left actions.

For $M,N\in A\md$, we also define the $\Lambda \times\Z_2$-graded abelian group
\[
  \HOM_A^\rL(M,N) := \bigoplus_{\mu \in \Lambda, \gamma \in \Z_2} \HOM_A^\rL(M,N)_{\mu, \gamma},
\]
where the homogeneous components are defined by
\begin{equation} \label{bighom} \ts
  \HOM_A^\rL(M,N)_{\mu,\gamma} := \{f \in \HOM_\Z(M,N)_{\mu,\gamma}\ |\ f(am)=(-1)^{\gamma \bar a} a f(m) \ \forall\ a \in A,\ m\in M\}.
\end{equation}
Note that we have an isomorphism of $\Lambda \times \Z_2$-graded abelian groups (which we will often view as identification)
\begin{equation} \label{eq:HOM-isom}
  \HOM_A^\rL(M,N)_{\mu, \gamma} \cong \HOM_A^\rL(M, \{\nu,\delta\}N)_{\mu+\nu,\gamma+\delta}.
\end{equation}
In particular, taking $\nu = -\mu$ and $\delta = \gamma$ in \eqref{eq:HOM-isom} gives the isomorphism
\begin{equation} \label{eq:Lhom-shift-identification}
  \HOM_A^\rL(M,N)_{\mu, \gamma} \cong \Hom_A^\rL(M, \{-\mu,\gamma\}N).
\end{equation}

For right modules, we define
\[
  \HOM_A^\rR(M,N)_{\mu,\gamma} := \{f \in \HOM_\Z(M,N)_{\mu,\gamma}\ |\ f(ma)=f(m)a \ \forall\ a \in A,\ m\in M\},
\]
and we have isomorphisms of $\Lambda \times \Z_2$-graded abelian groups (which we will often view as identification)
\begin{equation} \label{eq:Rhom-shift-identification}
  \HOM_A^\rR(M,N)_{\mu, \gamma} \cong \Hom_A^\rR(M, \{-\mu,\gamma\}N).
\end{equation}

If $M$ is a left $A$-module, we will sometimes use the notation $\pl{a}$ for the operator given by the left action by $a$, that is,
\begin{equation} \label{eq:left-mult-action}
  \pl{a}(m) = am ,\quad a \in A,\ m \in M.
\end{equation}
If $M$ is a right $A$-module, then for each homogeneous $a \in A$, we define a $\Z$-linear operator
\begin{equation} \label{eq:sign-right-action}
  \pr{a} \colon M\to M,\quad \pr{a}(m):=(-1)^{\bar a \bar m} ma, \quad a \in A,\ m \in M.
\end{equation}

If $A_1$ and $A_2$ are rings, then an $(A_1,A_2)$-bimodule is an abelian group that is simultaneously a left $A_1$-module and a right $A_2$-module and such that
\[
  (a_1 m)a_2 = a_1(ma_2), \quad a_1 \in A_1,\ a_2 \in A_2,\ m \in M.
\]
If $M$ is an $(A_1,A_2)$-bimodule and $N$ is a left $A_1$-module, then $\HOM_{A_1}^\rL(M,N)$ is a left $A_2$-module via the action
\begin{equation} \label{eq:left-hom-action}
  a \cdot f = (-1)^{\bar a \bar f} f \circ \pr{a},\quad a \in A_2,\ f \in \HOM_{A_1}^\rL(M,N),
\end{equation}
and $\HOM_{A_1}^\rL(N,M)$ is a right $A_2$-module via the action
\begin{equation} \label{eq:right-hom-action}
  f \cdot a = (-1)^{\bar a \bar f} (\pr{a}) \circ f,\quad a \in A_2,\ f \in \HOM_{A_1}^\rL(N,M).
\end{equation}
It is routine to verify that~\eqref{eq:HOM-isom} is an isomorphism of left $A_2$-modules and right $A_2$-modules under the actions~\eqref{eq:left-hom-action} and~\eqref{eq:right-hom-action}, respectively, when $M$ and $N$ have the appropriate structure.

Similarly, if $M$ is an $(A_1,A_2)$-bimodule and $N$ is a right $A_2$-module, then $\HOM_{A_2}^\rR(M,N)$ is a right $A_1$-module via the action
\begin{equation} \label{eq:right-rhom-action}
  f \cdot a = f \circ \pl{a},\quad a \in A_1,\ f \in \HOM_{A_2}^\rR(M,N),
\end{equation}
and $\HOM_{A_2}^\rR(N,M)$ is a left $A_1$-module via the action
\begin{equation} \label{eq:left-rhom-action}
  a \cdot f = \pl{a} \circ f,\quad a \in A_1,\ f \in \HOM_{A_2}^\rR(N,M).
\end{equation}

The tensor product $A_1 \otimes A_2 = A_1 \otimes_\Z A_2$ is also a ring, with product
\[
  (a_1 \otimes a_2)(a_1' \otimes a_2')=(-1)^{\bar a_2 \bar a_1'} a_1a_1'\otimes a_2a_2', \quad a_1,\ a_1'\in A_1,\ a_2,\ a_2'\in A_2.
\]
Notice that there is an isomorphism $A_1 \otimes A_2 \cong A_2 \otimes A_1$ of rings defined by
\[
  a_1 \otimes a_2 \mapsto (-1)^{\bar a_1 \bar a_2} a_2 \otimes a_1, \quad a_1 \in A_1,\ a_2 \in A_2.
\]

\begin{defin}[Projective bases] \label{def:proj-basis}
  A \emph{left projective basis} of a left $A$-module $M$ is a pair of sets of homogeneous elements
  \[
    \{x_i \in M_{\mu_i,\gamma_i}\}_{i\in I} \quad \text{and} \quad \{\varphi_i \in   \HOM_A^\rL(M,A)_{-\mu_i,\gamma_i}\}_{i \in I},
  \]
  where $\mu_i \in \Lambda$ and $\gamma_i \in \Z_2$ for $i \in I$, such that, for any $x\in M$, $\varphi_i(x)$ is nonzero for only finitely many $i$, and $x=\sum_{i \in I} (-1)^{\bar x \gamma_i}\varphi_i(x) x_i$.

  A \emph{right projective basis} of a right $A$-module $M$ is a pair of sets of homogeneous elements
  \[
    \{x_i \in M_{\mu_i,\gamma_i}\}_{i\in I} \quad \text{and} \quad \{\varphi_i \in   \HOM_A^\rR(M,A)_{-\mu_i,\gamma_i}\}_{i \in I},
  \]
  where $\mu_i \in \Lambda$ and $\gamma_i \in \Z_2$ for $i \in I$, such that, for any $x\in M$, $\varphi_i(x)$ is nonzero for only finitely many $i$, and $x=\sum_{i \in I} x_i \varphi_i(x)$.
\end{defin}

\begin{lem} \label{lem:proj-basis}
  A left (resp.\ right) $A$-module $M$ is projective if and only if there exists a left (resp.\ right) projective basis of $M$.  Furthermore, if $M$ is projective, then it is finitely generated if and only if it has a finite left (resp.\ right) projective basis (i.e.\ one for which the index set $I$ in Definition~\ref{def:proj-basis} is finite).
\end{lem}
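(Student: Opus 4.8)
This is the graded-super analogue of the classical dual-basis (projective-basis) lemma, so I would prove it along the classical route: a module is projective exactly when it is a direct summand of a free module, and the splitting data is precisely a projective basis. The only genuinely delicate point is the Koszul-type sign bookkeeping forced by the conventions of Section~\ref{sec:graded-superrings}, in particular the twist $(-1)^{\gamma\bar a}$ in the left action on a shifted module $\{\mu,\gamma\}M$ and in the definition~\eqref{bighom} of the graded homomorphism spaces. I treat the left-module case; the right-module case is entirely parallel, and in fact slightly simpler, since the shift $\{\mu,\gamma\}$ commutes with right actions, so no sign corrections are needed there.

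For the ``if'' direction, suppose $\{x_i \in M_{\mu_i,\gamma_i}\}_{i\in I}$ and $\{\varphi_i \in \HOM_A^\rL(M,A)_{-\mu_i,\gamma_i}\}_{i\in I}$ form a left projective basis. I set $F := \bigoplus_{i\in I}\{\mu_i,\gamma_i\}A$, a free left $A$-module with homogeneous generators $e_i := \{\mu_i,\gamma_i\}(1)$ of degree $(\mu_i,\gamma_i)$, and define $\pi \in \Hom_A^\rL(F,M)$ by $\pi(e_i) = (-1)^{\gamma_i}x_i$, together with $\sigma \in \Hom_A^\rL(M,F)$, the direct sum of the maps $M \to \{\mu_i,\gamma_i\}A$ corresponding to the $\varphi_i$ under~\eqref{eq:Lhom-shift-identification}, so that concretely $\sigma(x) = \sum_i \{\mu_i,\gamma_i\}(\varphi_i(x))$ (a finite sum lying in $F$ since $\varphi_i(x) = 0$ for all but finitely many $i$). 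After checking these are morphisms of the asserted degree, I expand $\pi\sigma(x) = \sum_i \pi(\{\mu_i,\gamma_i\}(\varphi_i(x)))$ using the shifted-module action and the fact that $\varphi_i(x)$ has $\Z_2$-degree $\gamma_i + \bar x$; the accumulated signs collapse --- this is exactly what the choice $\pi(e_i) = (-1)^{\gamma_i}x_i$ arranges --- to give $\pi\sigma(x) = \sum_i (-1)^{\bar x\gamma_i}\varphi_i(x)x_i = x$. Hence $\pi\sigma = \id_M$, so $M$ is a direct summand of the free module $F$ and is therefore projective; and if $I$ is finite, then $F$, and hence its quotient $M$, is finitely generated.

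For the ``only if'' direction, suppose $M$ is projective. I choose a surjection $\pi \colon F \twoheadrightarrow M$ in $A\md$ with $F$ free --- taking $F$ free on a finite homogeneous generating set of $M$ when $M$ is finitely generated --- and use projectivity of $M$, applied to $\id_M$, to split $\pi$: there is $\sigma \in \Hom_A^\rL(M,F)$ with $\pi\sigma = \id_M$. Fixing a homogeneous basis $\{e_i \in F_{\mu_i,\gamma_i}\}_{i\in I}$, so that $F = \bigoplus_i \{\mu_i,\gamma_i\}A$ with $e_i = \{\mu_i,\gamma_i\}(1)$, I take the coordinate maps $q_i \colon F \to A$; a short computation gives $q_i \in \HOM_A^\rL(F,A)_{-\mu_i,\gamma_i}$. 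Then, setting $x_i := (-1)^{\gamma_i}\pi(e_i) \in M_{\mu_i,\gamma_i}$ and $\varphi_i := q_i\circ\sigma \in \HOM_A^\rL(M,A)_{-\mu_i,\gamma_i}$, for each $x \in M$ only finitely many $\varphi_i(x)$ are nonzero (they are the components of $\sigma(x) \in F$), and the same sign computation as above, applied to $\sigma(x) = \sum_i\{\mu_i,\gamma_i\}(\varphi_i(x))$, yields $x = \pi\sigma(x) = \sum_i (-1)^{\bar x\gamma_i}\varphi_i(x)x_i$. Thus $\{x_i\}, \{\varphi_i\}$ is a left projective basis, and it is finite precisely when we arranged $I$ to be finite, i.e.\ when $M$ is finitely generated.

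The main --- and essentially only --- obstacle is the sign bookkeeping: checking that $\pi$, $\sigma$, and the coordinate maps $q_i$ (hence the $\varphi_i$) have the claimed $\Lambda\times\Z_2$-degrees and satisfy the twisted homomorphism condition~\eqref{bighom}, and verifying that the normalizing factor $(-1)^{\gamma_i}$ in $x_i = (-1)^{\gamma_i}\pi(e_i)$ is exactly what is needed to reproduce the sign $(-1)^{\bar x\gamma_i}$ appearing in Definition~\ref{def:proj-basis}. Once the signs are pinned down, the rest is the standard equivalence ``projective $\iff$ direct summand of a free module'', together with the evident correspondence between finiteness of the index set $I$ and finite generation of the ambient free module.
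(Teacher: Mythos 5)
Your proof is correct and follows essentially the same route as the paper's: a left projective basis is exactly the data of a splitting of a surjection from the free module $\bigoplus_{i}\{\mu_i,\gamma_i\}A$, the only real work being the Koszul signs forced by the shifted left action, which you track correctly. The one immaterial difference is in the finite-generation statement, where the paper prunes an arbitrary projective basis to the finitely many indices $i$ with $\varphi_i(y_k)\ne 0$ for some member of a finite generating set, whereas you arrange a finite-rank free cover from the outset; both work.
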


\begin{proof}
  We prove the left version of the lemma, since the right version is analogous, but with fewer signs. Let $\{x_i \in M_{\mu_i,\gamma_i}\}_{i \in I}$ and $\{\varphi_i \in \HOM_A^\rL(M,A)_{-\mu_i,\gamma_i}\}_{i \in I}$ be a left projective basis of $M$. Then the map
  \[
    g \colon \bigoplus_{i \in I} \{\mu_i,\gamma_i\}A \to M, \quad (\{\mu_i,\gamma_i\}a_i)_{i\in I} \mapsto \sum_{i\in I} (-1)^{\bar a_i \gamma_i}a_i x_i,
  \]
  is a surjective left $A$-module homomorphism since, for any $x\in M$, we have $x=g((\varphi_i(x))_{i \in I})$, where we have used the isomorphism \eqref{eq:Lhom-shift-identification} to identify $\HOM_A^\rL(M,A)_{-\mu_i, \gamma_i}$ with $\Hom_A^\rL(M, \{\mu_i,\gamma_i\}A$).
  \details{For any $a \in A$ we have
    \begin{multline*}
      g(a \cdot(\{\mu_i,\gamma_i\}a_i)_{i\in I})=g((a \cdot \{\mu_i,\gamma_i\}a_i)_{i \in I})=g((\{\mu_i,\gamma_i\}((-1)^{\bar a \gamma_i} a a_i))_{i \in I}) \\
      =\sum_{i \in I}(-1)^{\bar a_i \gamma_i} a a_ix_i= a g((\{\mu_i,\gamma_i\}a_i)_{i \in I}),
    \end{multline*}
    making $g$ a homomorphism of left $A$-modules.
  }
  We also have an $A$-module homomorphism
  \[
    \varphi \colon M \to \bigoplus_{i \in I} \{\mu_i,\gamma_i\}A, \quad x \mapsto (\varphi_i(x))_{i \in I}.
  \]
  Since $g \circ \varphi = \id_M$, $g$ is a split epimorphism, and hence $M$ is a direct summand of the free module $\bigoplus_{i\in I} \{\mu_i,\gamma_i\}A$.  Thus $M$ is projective.  The converse can be seen by reversing the argument.
  \details{
    Let $M$ be projective.  Then there exists a split epimorphism
    \[
      g\colon \bigoplus_{i \in I} \{\mu_i,\gamma_i\} A \to M,
    \]
    for some $I$ and $\mu_i \in \Lambda$, $\gamma_i \in \Z_2$, $i \in I$.  For each $i \in I$, set $x_i=g(\{\mu_i,\gamma_i\}1_i) \in M_{\mu_i,\gamma_i}$, where $\{\mu_i,\gamma_i\}1_i \in \bigoplus_{i \in I} \{\mu_i,\gamma_i\}A$ is the element with $\{\mu_i,\gamma_i\}1_A$ in the $i$-th component and zeroes elsewhere.  Then
    \[
      g((\{\mu_i,\gamma_i\}a_i)_{i\in I})
      = \sum_{i \in I} g((-1)^{\bar a_i\gamma_i}a_i\cdot \{\mu_i,\gamma_i\}1_i)
      = \sum_{i\in I}(-1)^{\bar a_i \gamma_i} a_i x_i.
    \]
    Since $g$ is a split epimorphism, there exists a left $A$-module homomorphism
    \[
      \varphi \colon M \to \bigoplus_{i \in I} \{\mu_i,\gamma_i\}A
    \]
    such that $g \circ \varphi =\id_M$.  For $i \in I$, let $\varphi_i$ be the $i$-th component of $\varphi$, viewed as a degree $(-\mu_i,\gamma_i)$ map to $A$.  Then $\varphi_i(x) = 0$ for all but finitely many $i \in I$ by the definition of direct sums, and
    \[
      x = g \circ \varphi(x) = \sum_{i \in I}(-1)^{\bar x \gamma_i} \varphi_i(x) x_i.
    \]
  }

  \details{
    For the right version, let $\{x_i \in M_{\mu_i,\gamma_i}\}_{i \in I}$ and $\{\varphi_i \in \HOM_A^\rR (M,A)_{-\mu_i,\gamma_i}\}_{i \in I}$ be a right projective basis for $M$. Then the map
    \[
       h\colon \bigoplus_{i \in I} \{\mu_i,\gamma_i\}A \to M,\quad (\{\mu_i,\gamma_i\}a_i)_{i \in I} \mapsto \sum_{i \in I} x_i a_i,
    \]
    is surjective since, for any $x \in M$, we have $x = h((\varphi_i(x))_{i \in I})$.  It is a homomorphism of right $A$-modules since, for $a \in A$, we have
    \[
      h((a_i)_{i \in I} \cdot a) = h((a_i a)_{i \in I}) = \sum_{i \in I} x_ia_ia = h((a_i)_{i \in I}) \cdot a.
    \]
    The remainder of the proof is analogous to the left version.
  }

  Now assume that $M$ is projective.  If $M$ has a finite left projective basis, then it is clearly finitely generated.   On the other hand, suppose $M$ is finitely generated and $\{x_i\}_{i \in I}$, $\{\varphi_i\}_{i \in I}$ is a left projective basis.  Choose a finite collection $y_1,\dotsc,y_n$ of generators of $M$.  Then is it easy to see that $\{x_i\}_{i \in J}$, $\{\varphi_i\}_{i \in J}$ is a finite left projective basis of $M$, where
  \[
    J = \{i \in I\ |\ \varphi_i(y_k) \ne 0 \text{ for some } k=1,2,\dotsc,n\}. \qedhere
  \]
\end{proof}

If $B$ is a subring of $A$, we define the \emph{centralizer} of $B$ in $A$ to be the subring of $A$ defined by
\begin{equation}
  C_A(B):= \{a \in A\ |\ ab=(-1)^{\bar a \bar b}ba \quad \forall\ b\in B\}.
\end{equation}

When we wish to consider a ring as a right and/or left module over some subring, we will denote this using subscripts.  For instance, if $B$ is a subring of $A$, then $\prescript{}{A}A_B$ denotes the ring $A$ considered as an $(A,B)$-bimodule.  In the remainder of the paper, we will also often use subscripts on modules to make it clear what type of module they are.  For example $\prescript{}{B}{M}_A$ will denote that $M$ is a $(B,A)$-bimodule.

Suppose $\prescript{}{A}M$ is a left $A$-module, $N_A$ is a right $A$-module, and $\alpha$ is a ring automorphism of $A$.  Then we can define the twisted left $A$-module $\prescript{\alpha}{A}{M}$ and twisted right $A$-module $N^\alpha_A$ to be equal to $\prescript{}{A}M$ and $N_A$, respectively, as graded abelian groups, but with actions given by
\begin{gather}
  \label{eq:left-twist} a \cdot m = \alpha(a) m,\quad a \in A,\ m \in \prescript{\alpha}{A}{M}, \\
  \label{eq:right-twist} n \cdot a = n \alpha(a),\quad a \in A,\ n \in N^\alpha_A,
\end{gather}
where juxtaposition denotes the original action of $A$ on $\prescript{}{A}M$ and $N_A$.  If $\alpha$ is a ring automorphism of $A$, and $B$ is a subring of $A$, then we will also use the notation $\prescript{\alpha}{B}{A}_A$ to denote the $(B,A)$-bimodule equal to $A$ as a graded abelian group, with right action given by multiplication, and with left action given by $b \cdot a = \alpha(b)a$ (where here juxtaposition is multiplication in the ring $A$), even though $\alpha$ is not a ring automorphism of $B$.  We use $\prescript{}{A}A^\alpha_B$ for the obvious right analogue.  By convention, when we consider twisted modules as above, operators such as $\pr{a}$ and $\pl{a}$ defined in \eqref{eq:left-mult-action} and \eqref{eq:sign-right-action} involve the right and left action (respectively) in the \emph{original} (i.e.\ untwisted) module.

%
\section{Twisted Frobenius extensions} \label{sec:Frob-ext}
%

In this section, we define our main objects of study: twisted Frobenius extensions.  We fix a ring $A$ and subring $B$.  We also let $\alpha$ and $\beta$ denote automorphisms of $A$ and $B$, respectively.

\begin{prop} \label{prop:Frob-LR-equivalence}
  Fix $\lambda \in \Lambda$ and $\pi \in \Z_2$. The set of conditions
  \begin{description}[style=multiline, labelwidth=0.7cm]
    \item[\namedlabel{L1}{L1}] $A$ is finitely generated and projective as a left $B$-module,
    \item[\namedlabel{L2}{L2}] $\prescript{}{A}A_B \cong \HOM^\rL_B \left( \prescript{\beta}{B}A_A^\alpha, \{\lambda,\pi\}\prescript{}{B}B_B \right)$ as $(A,B)$-bimodules,
  \end{description}
  is equivalent to the set of conditions
  \begin{description}[style=multiline, labelwidth=0.7cm]
    \item[\namedlabel{R1}{R1}] $A$ is finitely generated and projective as a right $B$-module,
    \item[\namedlabel{R2}{R2}] $\prescript{}{B}A_A \cong \HOM^\rR_B \left( \prescript{\alpha^{-1}}{A}A_B^{\beta^{-1}}, \{\lambda,\pi\}\prescript{}{B}B_B \right)$ as $(B,A)$-bimodules.
  \end{description}
\end{prop}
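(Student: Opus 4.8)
The plan is to prove the implication \ref{L1}--\ref{L2} $\Rightarrow$ \ref{R1}--\ref{R2}; the reverse implication then follows by a symmetric argument (swapping left and right, and replacing $\alpha,\beta$ by $\alpha^{-1},\beta^{-1}$), so I would only write one direction in detail. First I would dispose of the finiteness/projectivity equivalence \ref{L1} $\Leftrightarrow$ \ref{R1}. For this I would use \ref{L2} together with Lemma~\ref{lem:proj-basis}: condition \ref{L2} realizes $\prescript{}{A}A_B$ as a $\HOM_B^\rL$ of a certain twisted copy of $A$; one should observe that the bimodule on the left, $\prescript{}{A}A_B$, being finitely generated projective as a \emph{left} $B$-module is what \ref{L1} asserts, while $\prescript{\beta}{B}A_A^\alpha$ as a \emph{left} $B$-module is just $A$ up to the automorphism $\beta$, so it is finitely generated projective as a left $B$-module iff $A$ is. Applying the standard fact that $\HOM_B^\rL(-, \{\lambda,\pi\}\,\prescript{}{B}B_B)$ takes finitely generated projective left $B$-modules to finitely generated projective right $B$-modules (and is, up to the natural double-dual identification, an involution on such modules, using Lemma~\ref{lem:proj-basis} to produce dual bases), one gets that \ref{L1} and \ref{R1} are equivalent. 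The twists by $\alpha$ on the right of $A$ and by $\beta$ on the left do not affect projectivity or finite generation over $B$ on the relevant side, so they can be ignored for this part.

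The heart of the argument is deriving the bimodule isomorphism \ref{R2} from \ref{L2}. The approach is to take the given isomorphism $\Phi \colon \prescript{}{A}A_B \xrightarrow{\ \sim\ } \HOM^\rL_B\!\left( \prescript{\beta}{B}A_A^\alpha, \{\lambda,\pi\}\prescript{}{B}B_B \right)$ and apply the functor $\HOM_B^\rR\!\left(-, \{\lambda,\pi\}\,\prescript{}{B}B_B\right)$, or rather to dualize directly. Concretely, from $\Phi$ one extracts a $B$-bilinear trace-type map and then produces its ``transpose''. I would set up a natural $(B,A)$-bimodule map
\[
  \Psi \colon \prescript{}{B}A_A \longrightarrow \HOM^\rR_B\!\left( \prescript{\alpha^{-1}}{A}A_B^{\beta^{-1}}, \{\lambda,\pi\}\prescript{}{B}B_B \right),
  \quad \Psi(a)(a') = (-1)^{?}\,\Phi(a')(a),
\]
(with the sign and the precise placement of $\alpha^{-1},\beta^{-1}$ to be pinned down by bookkeeping), and check: (i) each $\Psi(a)$ is a right $B$-module map of the correct degree into $\{\lambda,\pi\}B$; (ii) $\Psi$ is a homomorphism of $(B,A)$-bimodules, where the left $B$-action and right $A$-action on the target are those coming from \eqref{eq:right-rhom-action}/\eqref{eq:left-rhom-action} applied to the bimodule $\prescript{\alpha^{-1}}{A}A_B^{\beta^{-1}}$; and (iii) $\Psi$ is bijective. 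For (iii) I would use that $A$ is finitely generated projective over $B$ on the appropriate side (now available from the first paragraph) to invoke a double-centralizer / reflexivity statement: dualizing an isomorphism into $\{\lambda,\pi\}B$ of finitely generated projective modules again yields an isomorphism. The twists are handled by the convention, stated at the end of Section~\ref{sec:graded-superrings}, that $\pl a,\pr a$ always use the \emph{untwisted} actions, so that transposing $\prescript{\beta}{B}A_A^\alpha$ through a $\HOM$ into $B$ naturally produces the module $\prescript{\alpha^{-1}}{A}A_B^{\beta^{-1}}$ on the other side.

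The main obstacle I anticipate is purely a matter of sign and twist bookkeeping rather than conceptual difficulty: verifying that the transpose map $\Psi$ really intertwines the bimodule structures, and in particular that the automorphisms come out as $\alpha^{-1}$ and $\beta^{-1}$ (not $\alpha,\beta$) and on the correct sides, while simultaneously tracking the Koszul signs introduced by \eqref{eq:left-hom-action}--\eqref{eq:left-rhom-action} and the shift $\{\lambda,\pi\}$. A clean way to organize this is to first treat the untwisted, ungraded skeleton of the argument (where it reduces to the classical fact, essentially due to Morita, that an $(A,B)$-bimodule isomorphism $A \cong \Hom_B(A,B)$ transposes to a $(B,A)$-bimodule isomorphism $A \cong \Hom_B(A,B)$ on the other side), and then reinstate the gradings, the super signs, and the automorphism twists one layer at a time, checking compatibility at each step. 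I would relegate the longest sign computations to the hidden \details blocks, stating in the main text only the definition of $\Psi$, the three properties to check, and why projectivity gives bijectivity.
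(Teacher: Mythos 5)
Your proposal follows essentially the same route as the paper's proof: deduce \ref{R1} from \ref{L1} and \ref{L2} by dualizing a finite projective decomposition of $\prescript{}{B}A$, define $\Psi$ as a (signed, $\beta$-adjusted) transpose of $\Phi$, and use reflexivity of finitely generated projective modules (the evaluation map into the double dual being an isomorphism) to get bijectivity. The only detail left implicit --- that the correct transpose is $\Psi(a)(x) = (-1)^{\bar a \bar x}\beta(\Phi(x)(a))$, with the outer $\beta$ needed to make $\Psi(a)$ right $B$-linear on $\prescript{\alpha^{-1}}{A}A_B^{\beta^{-1}}$ --- is exactly the bookkeeping you flag, and it works out as you anticipate.
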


\begin{proof}
  We prove that conditions \ref{L1} and \ref{L2} imply \ref{R1} and \ref{R2}.  The reverse implication is analogous.  Assume \ref{L1} and \ref{L2} hold.  By \ref{L1}, there exists a left $B$-module $\prescript{}{B}M$ such that
  \[
    \prescript{}{B}A \oplus \prescript{}{B}M \cong \bigoplus_{i=1}^n \{\mu_i,\gamma_i\} \prescript{}{B}B \quad \text{for some } n \in \N,\ \mu_1,\dotsc,\mu_n \in \Lambda,\ \gamma_1,\dotsc,\gamma_n \in \Z_2.
  \]
  Then $\prescript{\beta}{B}A \oplus \prescript{\beta}{B}M \cong \bigoplus_{i=1}^n \{\mu_i,\gamma_i\} \prescript{\beta}{B}B \cong \bigoplus_{i=1}^n \{\mu_i,\gamma_i\}\prescript{}{B}B$ as left $B$-modules.  So we have isomorphisms of right $B$-modules
  \begin{multline*}
    \HOM^\rL_B (\prescript{\beta}{B}A, \prescript{}{B}B_B) \oplus \HOM^\rL_B(\prescript{\beta}{B}M, \prescript{}{B}B_B) \cong \HOM^\rL_B \left( \bigoplus_{i=1}^n \{\mu_i,\gamma_i\} \prescript{}{B}B, \prescript{}{B}B_B \right) \\
    \cong \bigoplus_{i=1}^n \HOM^\rL_B \left( \prescript{}{B}B, \{-\mu_i,\gamma_i\} \prescript{}{B}B_B \right) \cong \bigoplus_{i=1}^n \{\mu_i,\gamma_i\}B_B.
  \end{multline*}
  Thus $\HOM^\rL_B (\prescript{\beta}{B}A, \prescript{}{B}B_B)$ is projective as a right $B$-module.  Hence, by \ref{L2}, we see that \ref{R1} is satisfied.

  By \ref{L2} there exists an isomorphism of $(A,B)$-bimodules $\Phi \colon \prescript{}{A}A_B \cong \HOM_B^{\rL} (\prescript{\beta}{B}A_A^\alpha,\{\lambda,\pi\}\prescript{}{B}B_B)$.  Define
  \begin{equation}\label{eq:left-right-correspondence}
    \Psi \colon \prescript{}{B}A_A \to \HOM_B^{\rR} \left( \prescript{\alpha^{-1}}{A}A_B^{\beta^{-1}},\{\lambda,\pi\} \prescript{}{B}B_B \right),\quad
    \Psi(a)(x) = (-1)^{\bar a \bar x}\beta(\Phi(x)(a)),\ a, x \in A.
  \end{equation}
  It suffices to show that $\Psi$ is an isomorphism of $(B,A)$-bimodules.  The verification that it is a homomorphism of $(B,A)$-bimodules is straightforward.
  \details{
    Let $a,a',x \in A$, $b\in B$.  We identify the codomains of $\Phi$ and $\Psi$ with $\HOM_B^{\rL} (\prescript{\beta}{B}A_A^\alpha,\prescript{}{B}B_B)_{-\lambda,\pi}$ and $\HOM_B^{\rR} \left( \prescript{\alpha^{-1}}{A}A_B^{\beta^{-1}}, \prescript{}{B}B_B \right)_{-\lambda,\pi}$ respectively.  Then we have
    \begin{multline*}
      \Psi(a)(x\cdot b) = \Psi(a)(x\beta^{-1}(b)) = (-1)^{\bar a (\bar x + \bar b)}\beta \big( \Phi(x \beta^{-1}(b))(a) \big) = (-1)^{\bar a (\bar x + \bar b) + \bar b (\bar x + \pi)} \beta \left( \pr{\beta^{-1}(b)}\circ \Phi(x)(a)\right) \\
      = (-1)^{\bar a \bar x} \beta \big( \Phi(x)(a) \beta^{-1}(b) \big)= (-1)^{\bar a \bar x} \beta \big( \Phi(x)(a) \big) b = \Psi(a)(x)b,
    \end{multline*}
    and so $\Psi(a) \in \Hom_B^{\rR} \left( \prescript{\alpha^{-1}}{A}A_B^{\beta^{-1}}, \prescript{}{B}B_B \right)$.  Furthermore,
    \begin{multline*}
      \Psi(b\cdot a)(x) = (-1)^{(\bar b + \bar a)\bar x} \beta (\Phi(x)(ba)) = (-1)^{\bar a \bar x} \beta (\beta^{-1}(b)\Phi(x)(a)) \\
      = (-1)^{\bar a \bar x} b \beta (\Phi(x)(a))
      = b \Psi(a)(x)
      = (b \cdot \Psi(a))(x).
    \end{multline*}
    Thus, $\Psi(b \cdot a) = b \cdot \Psi(a)$ and so $\Psi$ is a homomorphism of left $B$-modules.  We also have
    \begin{multline*}
      \Psi(a\cdot a')(x)
      = (-1)^{(\bar a + \bar a') \bar x} \beta (\Phi(x)(aa'))
      = (-1)^{(\bar a + \bar a') \bar x + \bar a \bar a'} \beta (\Phi(x) \circ \pr{a'} (a)) \\
      = (-1)^{\bar a \bar x + \pi \bar a' + \bar a \bar a'} \beta \big( (\alpha^{-1}(a')\cdot \Phi(x))(a) \big)
      = (-1)^{\bar a (\bar x + \bar a')} \beta \big( \Phi(\alpha^{-1}(a')x)(a) \big) \\
      = \Psi(a)(\alpha^{-1}(a')x)
      = (\Psi(a)\cdot a')(x).
    \end{multline*}
    Thus $\Psi(a \cdot a') = \Psi(a) \cdot a'$ and  so $\Psi$ is a homomorphism of left $A$-modules.  The degree shift of $\{\lambda,\pi\}$ follows immediately from the degree shift of $\{\lambda,\pi\}$ present in the isomorphism $\Phi$.
  }
  To see that $\Psi$ is an isomorphism, consider the natural map
  \begin{multline*}
    \epsilon \colon \prescript{\beta}{B}A_A^\alpha \to \HOM^{\rR}_B\left( \HOM_B^{\rL} \left( \prescript{\beta}{B}A_A^\alpha,\prescript{}{B}B_B \right),\prescript{}{B}B_B\right) \\
    \cong \HOM^{\rR}_B\left( \HOM_B^{\rL} \left( \prescript{\beta}{B}A_A^\alpha,\{\lambda,\pi\}\prescript{}{B}B_B \right), \{\lambda,\pi\}\prescript{}{B}B_B \right),\quad a \mapsto (f\mapsto f(a)).
  \end{multline*}
  If $\Psi(a)=0$ then $\Phi(x)(a)=0$ for every $x\in A$.  Then, since $\Phi$ is an isomorphism, $f(a)=0$ for every $f \in \HOM_B^{\rL} (\prescript{\beta}{B}A_A^\alpha, \prescript{}{B}B_B)$, and hence $\epsilon(a)=0$. But  $\prescript{\beta}{B}A_A^\alpha$ is a finitely generated and projective left $B$-module, and so $\epsilon$ is an isomorphism.  We conclude that $a=0$, and thus $\Psi$ is a monomorphism.   Moreover, any element of $\HOM^{\rR}_B (\HOM_B^{\rL} ( \prescript{\beta}{B}A_A^\alpha, \{\lambda,\pi\}\prescript{}{B}B_B ),\{\lambda,\pi\} \prescript{}{B}B_B )$ is of the form $f \mapsto f(a)$ for some $a\in A$, and hence any element of $\HOM^{\rR}_B \left( \prescript{}{A}A_B, \{\lambda,\pi\}\prescript{}{B}B_B \right)$ is of the form $x \mapsto \Phi(x)(a)$, so that $\Psi$ is an epimorphism.
\end{proof}

Note that~\eqref{eq:left-right-correspondence} gives an explicit one-to-one correspondence between the isomorphisms in \ref{L2} and those in \ref{R2}.

\begin{defin}[Twisted Frobenius extension] \label{def:twisted-Frob-ext}
  We call $A$ an \emph{$(\alpha,\beta)$-Frobenius extension} of $B$ of \emph{degree} $(-\lambda,\pi)$ if \ref{L1} and \ref{L2} (equivalently, \ref{R1} \and \ref{R2}) are satisfied.  We call $A$ a \emph{twisted Frobenius extension} of $B$ if there exist $\alpha, \beta, \lambda, \pi$ such that $A$ is an $(\alpha,\beta)$-Frobenius extension of $B$ of degree $(-\lambda,\pi)$.
\end{defin}

\begin{rem}
  We say the extension is of degree $(-\lambda,\pi)$ since that is the degree of the bimodule isomorphisms \ref{L2} and \ref{R2} under the identifications \eqref{eq:Lhom-shift-identification} and \eqref{eq:Rhom-shift-identification}.  Note that, if $A$ and $B$ are concentrated in degree $(0,0)$, then we recover the $(\alpha,\beta)$-Frobenius extensions of \cite{Mor65}.  In particular, an $(\id_A,\beta)$-Frobenius extension of degree $(0,0)$ is a $\beta^{-1}$-extension in the language of \cite{NT60}.  Furthermore, an $(\id_A,\id_B)$-Frobenius extension of degree $(0,0)$ is a Frobenius extension in the usual sense.
  \details{A homomorphism $\varphi \in \Hom^\rR_B(M_B^{\beta},N_B)$ has the property $\varphi(x)b=\varphi(x\cdot b)=\varphi(x\beta(b))$, and hence $\varphi(xb)=\varphi(x)\beta^{-1}(b)$.  So there is a canonical isomorphism $\Hom^\rR_B(M_B^\beta,N_B)\cong \Hom^\rR_{B,\beta^{-1}}(M_B,N_B)$. Setting $\alpha=\id_A$ in condition \ref{R2} then gives $\prescript{}{B}A_A\cong \Hom^\rR_B(\prescript{}{A}A_B^{\beta},\prescript{}{B}B_B)\cong \Hom^\rR_{B,\beta^{-1}}(\prescript{}{A}A_B,\prescript{}{B}B_B)$, which is condition $2_r$ of \cite{NT60} in the definition of a $\beta^{-1}$ extension. }
  If $B=\F$ and $\Lambda = \Z$, then an $(\id_A,\id_\F)$-Frobenius extension of degree $(-\lambda,\pi)$ is a Frobenius $\Z$-graded superalgebra of degree $(-\lambda,\pi)$ (see \cite[Def.~6.1]{RS15}).  Thus, Definition~\ref{def:twisted-Frob-ext} generalizes several definitions appearing in the literature.
\end{rem}

\begin{prop} \label{prop:Frob-data-uniqueness}
  Let $A$ be an $(\alpha,\beta)$-Frobenius extension of $B$ of degree $(-\lambda,\pi)$ and let $\alpha'$, $\beta'$ be automorphisms of $A$, $B$ respectively.  Furthermore, let $\lambda' \in \Lambda$ and $\pi' \in \Z_2$.  Then $A$ is an $(\alpha',\beta')$-Frobenius extension of $B$ of degree $(-\lambda',\pi')$ if and only if there exists an invertible element $\mu \in A_{(\lambda'-\lambda, \pi'-\pi)}$ such that
  \begin{equation} \label{eq:Frob-data-uniqueness}
    \mu \left( (\alpha')^{-1} \alpha(b) \right) \mu^{-1} = (\beta')^{-1}\beta(b) \quad \text{ for all } b \in B.
  \end{equation}
\end{prop}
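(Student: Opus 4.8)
Since \ref{L1} and \ref{R1} involve neither $\alpha$, $\beta$ nor $\lambda$, $\pi$, they hold for the primed data as soon as they hold for the unprimed data; so the statement ``$A$ is an $(\alpha',\beta')$-Frobenius extension of $B$ of degree $(-\lambda',\pi')$'' is equivalent to condition \ref{L2} for the primed data. The plan is to fix an $(A,B)$-bimodule isomorphism $\Phi$ realising the unprimed \ref{L2} and, by pre-composing with $\Phi$, rephrase the primed \ref{L2} as the existence of an isomorphism of $(A,B)$-bimodules
\[
  \HOM^\rL_B\bigl(\prescript{\beta}{B}A_A^\alpha,\ \{\lambda,\pi\}\prescript{}{B}B_B\bigr)
  \;\cong\;
  \HOM^\rL_B\bigl(\prescript{\beta'}{B}A_A^{\alpha'},\ \{\lambda',\pi'\}\prescript{}{B}B_B\bigr).
\]

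Next I would run this through the $B$-dual. Using \eqref{eq:HOM-isom} to pull the grading shifts out of the second argument and then applying the contravariant functor $\HOM^\rL_B(-,\prescript{}{B}B_B)$, the displayed isomorphism becomes equivalent to an isomorphism of $(B,A)$-bimodules between $\prescript{\beta}{B}A_A^\alpha$ and a grading-shifted, twisted copy of $\prescript{\beta'}{B}A_A^{\alpha'}$. This is legitimate because $\HOM^\rL_B(-,\prescript{}{B}B_B)$ restricts to an anti-equivalence between finitely generated projective left and right $B$-modules --- the double dual being the canonical isomorphism $\epsilon$ from the proof of Proposition~\ref{prop:Frob-LR-equivalence} --- and $\prescript{\beta}{B}A$, $\prescript{\beta'}{B}A$ are finitely generated projective as left $B$-modules by \ref{L1} (twisting the $B$-action by an automorphism preserves this, since $\prescript{\beta}{B}B\cong\prescript{}{B}B$ via $\beta^{-1}$). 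Applying the unprimed \ref{L2} a second time, to re-express the $B$-dual of $\prescript{}{B}A_A$ in terms of $\prescript{}{A}A_B$, collapses the whole condition to the existence of an isomorphism of $(A,B)$-bimodules
\[
  \prescript{}{A}A_B \;\cong\; \{\lambda'-\lambda,\ \pi'-\pi\}\ \prescript{\eta}{A}A_B^{\theta},
\]
where $\eta$ and $\theta$ are the automorphisms of $A$ and $B$ coming out of the two sets of data, namely $(\alpha')^{-1}\alpha$ and $(\beta')^{-1}\beta$. Pinning down which automorphisms appear, in which order, and the sign on the shift is the finicky part of this step.

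Finally I would classify the $(A,B)$-bimodule homomorphisms $g\colon \prescript{}{A}A_B \to \{\lambda'-\lambda,\pi'-\pi\}\prescript{\eta}{A}A_B^{\theta}$ by hand. Left $A$-linearity forces $g$ to be determined by $\mu:=g(1_A)$, via a formula of the shape $g(a)=\pm\,\eta(a)\,\mu$ (or its left--right mirror image), with $\mu$ homogeneous of degree $(\lambda'-\lambda,\pi'-\pi)$; such a map is right $B$-linear if and only if $\mu$ conjugates $\theta(b)$ into $\eta(b)$ for every $b\in B$ (possibly up to a sign), which, after rearranging and perhaps passing to $\mu^{-1}$, is exactly \eqref{eq:Frob-data-uniqueness}; and $g$ is an isomorphism if and only if $\mu$ is invertible in $A$ --- the ``if'' is immediate, and the ``only if'' follows because $g^{-1}$ is again a map of this shape, so $g(1_A)$ has both a left and a right inverse. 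Stringing the equivalences together yields the proposition.

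The main obstacle is not the conceptual skeleton but the sign and shift bookkeeping that pervades the second and third steps: the shift functors do not commute with left actions (they introduce Koszul signs), the $B$-dual interchanges the left and right twists and turns a shift by $\nu$ into a shift by $-\nu$, and the super conventions for $\pr{a}$, $\pl{a}$ and for the induced module actions \eqref{eq:left-hom-action}--\eqref{eq:right-hom-action} all contribute signs. One has to verify that these cancel exactly so that the automorphisms in the reduced condition are $(\alpha')^{-1}\alpha$ and $(\beta')^{-1}\beta$, that the degree of $\mu$ comes out as $(\lambda'-\lambda,\pi'-\pi)$ and not its negative, and that \eqref{eq:Frob-data-uniqueness} emerges with no spurious Koszul sign.
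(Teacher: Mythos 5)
Your overall skeleton matches the paper's at the two ends: the observation that \ref{L1} is independent of $(\alpha,\beta,\lambda,\pi)$, and the final classification of bimodule isomorphisms between $\prescript{}{A}A_B$ and a shifted, twisted copy of itself via the image of $1_A$ (including the argument that $g(1_A)$ is invertible because $g^{-1}$ has the same shape) are exactly what the paper does. Where you diverge is the middle reduction. The paper does not dualize: it writes down the explicit isomorphism $\Gamma\colon \HOM^\rL_B(\prescript{\beta}{B}A^{\alpha}_A,\prescript{}{B}B_B)\to \prescript{\sigma}{}\HOM^\rL_B(\prescript{\beta'}{B}A^{\alpha'}_A,\prescript{}{B}B_B)^{\varsigma}$, $\varphi\mapsto\varsigma\circ\varphi$, with $\sigma=(\alpha')^{-1}\alpha$ and $\varsigma=(\beta')^{-1}\beta$, and then composes $(\Phi'_{\varsigma\sigma})^{-1}\Gamma\Phi$ to land directly on an $(A,B)$-bimodule isomorphism $\prescript{}{A}A_B\cong\{\lambda-\lambda',\pi-\pi'\}\prescript{\sigma}{A}A^{\varsigma}_B$. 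This buys a very short path on which the twists $\sigma,\varsigma$ appear by construction, so \eqref{eq:Frob-data-uniqueness} drops out with no reconciliation needed.

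Your dualization route is workable in principle (the anti-equivalence on finitely generated projectives is available by \ref{L1}, and the double dual is the map $\epsilon$ from Proposition~\ref{prop:Frob-LR-equivalence}), but be aware that the ``finicky bookkeeping'' you defer hides more than Koszul signs. The natural output of dualizing an isomorphism between the two $\HOM$ spaces is a $(B,A)$-bimodule isomorphism $\prescript{\beta'}{B}A^{\alpha'}_A\cong\{\cdot\}\prescript{\beta}{B}A^{\alpha}_A$; classifying \emph{that} by the image of $1_A$ yields a condition of the shape $\nu\,\alpha(\alpha')^{-1}\beta'(b)\,\nu^{-1}=\beta(b)$, which is not literally \eqref{eq:Frob-data-uniqueness} and is not converted into it by a simple substitution, since $\alpha,\alpha'$ need not commute and $(\alpha')^{-1}(b)$ need not lie in $B$. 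To land on the stated form you must first pass from the $(B,A)$-side back to the $(A,B)$-side, which is itself the nontrivial correspondence \eqref{eq:left-right-correspondence} of Proposition~\ref{prop:Frob-LR-equivalence} rather than a formality. So the step you flag as bookkeeping is the one place where your argument still needs a genuine (if routine) lemma, not just sign-chasing; with that supplied, the proof goes through.
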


\begin{proof}
  Set $\varsigma = (\beta')^{-1}\beta$ and $\sigma = (\alpha')^{-1}\alpha$.  Then we have an isomorphism of $(A,B)$-bimodules
  \[
    \Gamma \colon \HOM^\rL_B (\prescript{\beta}{B}A^{\alpha}_A, \prescript{}{B}B_B) \to \prescript{\sigma}{}\HOM_B^{\rL}(\prescript{\beta'}{B}A^{\alpha'}_{A}, \prescript{}{B}B_B)^{\varsigma}, \quad
    \varphi \mapsto \varsigma \circ \varphi.
  \]
  \details{
    For $\varphi \in \Hom^\rL_B (\prescript{\beta}{B}A^{\alpha}_A, \prescript{}{B}B_B)$, we have $\Gamma(\varphi)\in \prescript{\varsigma}{}\Hom_B^{\rL}(\prescript{\beta'}{B}A^{\alpha'}_{A},\prescript{}{B}B_B)^{\sigma}$ since, for all $x \in \prescript{\beta}{B}A^{\alpha}_{A}$,
    \[
      \varsigma \circ \varphi(b\cdot x) = \varsigma \circ \varphi (\beta'(b)x) = \varsigma(\beta^{-1} \beta'(b) \varphi(x)) = b \varsigma \varphi(x).
    \]
    The map $\Gamma$ is a homomorphism of right $B$-modules since
    \[
      \Gamma(\varphi \cdot b) = (-1)^{\bar b \bar \varphi} \Gamma \left( \pr{b} \circ \varphi \right) = (-1)^{\bar b \bar \varphi} \varsigma \circ \pr{b} \circ \varphi = (-1)^{\bar b \bar \varphi} \left( \pr{\varsigma(b)} \right) \circ \varsigma \circ \varphi = (-1)^{\bar b \bar \varphi} \left( \pr{\varsigma(b)} \right) \circ \Gamma(\varphi) = \Gamma(\varphi) \cdot b.
    \]
    It is homomorphism of left $A$-modules since
    \[
      \Gamma(a \cdot \varphi) = (-1)^{\bar a \bar \varphi} \Gamma \left(\varphi \circ \pr{\alpha(a)} \right) = (-1)^{\bar a \bar \varphi} \varsigma \circ \varphi \circ \pr{\alpha(a)} = (-1)^{\bar a \bar \varphi} \varsigma \circ \varphi \circ \pr{\alpha'(\sigma (a))}= a \cdot (\varsigma \circ \varphi) = a \cdot \Gamma(\varphi).
    \]
    It is clearly invertible with inverse $\varphi \mapsto \varsigma^{-1} \circ \varphi$.
  }
  Also, since $A$ is an $(\alpha,\beta)$-Frobenius extension of $B$ of degree $(-\lambda,\pi)$, there is an isomorphism of $(A,B)$-bimodules $\Phi \colon \prescript{}{A}A_B \to \HOM^{\rL}_B (\prescript{\beta}{B}A^{\alpha}_A, \{\lambda,\pi\}\prescript{}{B}B_B)$.

  Now suppose that $A$ is also an $(\alpha',\beta')$-Frobenius extension of degree $(-\lambda',\pi')$.  Then we have an $(A,B)$-bimodule isomorphism $\Phi' \colon \prescript{}{A}A_B \to \Hom^{\rL}_B (\prescript{\beta'}{B}A^{\alpha'}_A, \{\lambda', \pi'\} \prescript{}{B}B_B)$.  This induces another isomorphism of $(A,B)$-bimodules $\Phi'_{\varsigma\sigma} \colon \prescript{\sigma}{A}A_B^{\varsigma} \to \prescript{\sigma}{}\Hom^{\rL}_B (\prescript{\beta'}{B}A^{\alpha'}_A, \{\lambda',\pi'\} \prescript{}{B}B_B)^\varsigma$.  Consider the $(A,B)$-bimodule isomorphism
  \[
    \rho := (\Phi_{\varsigma\sigma}')^{-1} \Gamma \Phi \colon \prescript{}{A}A_B \xrightarrow{\cong} \{\lambda-\lambda',\pi-\pi'\} \prescript{\sigma}{A}A^\varsigma_B.
  \]
  We claim that $\rho(1_A)$ is an invertible element in $A$.  Indeed, for any $x \in A$, we have $x\rho(1_A) = \rho(\sigma^{-1}(x)1_A) = \rho(\sigma^{-1}(x))$.  Thus $A\rho(1_A) = \rho\sigma^{-1}(A)=A$, and if $x\rho(1_A)=0$ then $x=0$, which shows that right multiplication by $\rho(1_A)$ is a left $A$-module automorphism of $A$.  Hence $\rho(1_A)$ has a left inverse $a$ in $A$. Then
  \[
    (1_A - \rho(1_A)a)\rho(1_A) = 0_A \implies 1_A = \rho(1_A)a,
  \]
  and so $a$ is also a right inverse to $\rho(1_A)$.

  Finally, for any $b \in B$, we have
  \[
    \rho(1_A) \varsigma(b) \rho(1_A)^{-1} = \rho(1_A b) \rho(1_A)^{-1} = \rho(b 1_A) \rho(1_A)^{-1} = \sigma(b) \rho(1_A) \rho(1_A)^{-1} = \sigma(b),
  \]
  so setting $\mu=\rho(1_A)^{-1}$ gives~\eqref{eq:Frob-data-uniqueness}.

  Conversely, suppose there is some invertible element $\mu \in A_{(\lambda'-\lambda, \pi'-\pi)}$ such that~\eqref{eq:Frob-data-uniqueness} is satisfied, that is, such that $\mu \sigma(b) \mu^{-1} = \varsigma(b)$ for all $b \in B$.  Then the map
  \[
    \rho \colon \prescript{}{A}A_B \to \{\lambda-\lambda', \pi-\pi'\} \prescript{\sigma}{A}A^\varsigma_B,\quad \rho(x)=\sigma(x) \mu^{-1},
  \]
  is an isomorphism of $(A,B)$-bimodules.
  \details{
    The map is clearly invertible.  For $a, x \in A$ and $b \in B$, we have
    \begin{gather*}
      \rho(a \cdot x) = \rho(ax) = \sigma(a x) \mu^{-1} = \sigma(a) \sigma(x) \mu^{-1} = \sigma(a) \rho(x), \quad \text{and} \\
      \rho(x \cdot b) = \rho(x b) = \sigma(x b) \mu^{-1} = \sigma(x) \sigma(b) \mu^{-1} = \sigma(x) \mu^{-1} \varsigma(b) = \rho(x) \cdot b.
    \end{gather*}
  }
  We then have an isomorphism of $(A,B)$-bimodules
  \[
    \Gamma \Phi \rho^{-1} \colon \prescript{\sigma}{A}A_B^\varsigma \cong \prescript{\sigma}{}\HOM^{\rL}_B(\prescript{\beta'}{B}A^{\alpha'}_A,\{\lambda',\pi'\} \prescript{}{B}B_B )^\varsigma .
  \]
  It follows that we have $\prescript{}{A}A_B\cong \HOM^{\rL}_B(\prescript{\beta'}{B}A^{\alpha'}_A, \{\lambda',\pi'\} \prescript{}{B}B_B)$, and so $A$ is an $(\alpha',\beta')$-Frobenius extension of degree $(-\lambda',\pi')$.
\end{proof}

\begin{rem}
  If the abelian group $\Lambda$ is free, then all invertible elements of $A$ lie in $\Lambda$-degree zero.  This forces $\lambda=\lambda'$ in Proposition~\ref{prop:Frob-data-uniqueness}.  If $\alpha = \alpha' = \id_A$ and all gradings are trivial (that is, $A$ is concentrated in degree $(0,0)$), then Proposition~\ref{prop:Frob-data-uniqueness} reduces to \cite[Prop.~2]{NT60}.
\end{rem}

\begin{cor} \label{cor:Frob-Nakayama-case}
  If $A$ is an $(\alpha,\beta)$-Frobenius extension of $B$ of degree $(-\lambda,\pi)$ and $\alpha(B)=B$, then $A$ is an $(\id_A,\beta \circ \alpha^{-1})$-Frobenius extension of $B$ of degree $(-\lambda,\pi)$.  In particular, if $A$ is concentrated in degree $(0,0)$, then $A$ is a $\beta \circ \alpha^{-1}$-extension of $B$ in the language of \cite{NT60}.
\end{cor}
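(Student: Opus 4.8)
\emph{Approach.} The plan is to read this off directly from Proposition~\ref{prop:Frob-data-uniqueness}. Before applying it, I would first check that the statement even makes sense: since $\alpha$ is an automorphism of $A$ with $\alpha(B)=B$, its restriction $\alpha|_B$ is an automorphism of $B$, hence so is $\alpha^{-1}|_B$, and therefore $\beta\circ\alpha^{-1}$ is a genuine automorphism of $B$. In particular $\beta\circ\alpha^{-1}$ maps $B$ into $B$, and this is precisely where the hypothesis $\alpha(B)=B$ enters.

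\emph{Main step.} I would then invoke Proposition~\ref{prop:Frob-data-uniqueness} with the choices $\alpha'=\id_A$, $\beta'=\beta\circ\alpha^{-1}$, $\lambda'=\lambda$, and $\pi'=\pi$. Since $\lambda'-\lambda=0$ and $\pi'-\pi=0$, the identity $1_A$ is an invertible element of $A_{(\lambda'-\lambda,\,\pi'-\pi)}=A_{(0,0)}$, and I claim $\mu=1_A$ satisfies \eqref{eq:Frob-data-uniqueness}. Indeed $(\alpha')^{-1}\alpha=\alpha$, while $(\beta')^{-1}\beta=(\beta\circ\alpha^{-1})^{-1}\circ\beta=\alpha\circ\beta^{-1}\circ\beta=\alpha$, so both sides of \eqref{eq:Frob-data-uniqueness} equal $\alpha(b)$ and the condition degenerates to the tautology $1_A\,\alpha(b)\,1_A=\alpha(b)$ for $b\in B$. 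Hence Proposition~\ref{prop:Frob-data-uniqueness} yields that $A$ is an $(\id_A,\beta\circ\alpha^{-1})$-Frobenius extension of $B$ of degree $(-\lambda,\pi)$, which is the first assertion.

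\emph{The ``in particular''.} This is a specialization: when $A$ is concentrated in degree $(0,0)$, so is $B$, and then the bimodule isomorphism in \ref{L2} forces $\lambda=0$ and $\pi=0$, so the first part produces an $(\id_A,\beta\circ\alpha^{-1})$-Frobenius extension of degree $(0,0)$. The identification of such extensions with extensions in the sense of \cite{NT60}, recorded in the Remark following Definition~\ref{def:twisted-Frob-ext}, then applies with $\beta$ there replaced by $\beta\circ\alpha^{-1}$, giving the claim.

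\emph{Expected obstacle.} I do not anticipate a substantive difficulty, since the whole corollary is a repackaging of Proposition~\ref{prop:Frob-data-uniqueness}. The only points requiring attention are (i) the well-definedness observation above, which is exactly what the hypothesis $\alpha(B)=B$ provides, and (ii) the small amount of bookkeeping needed to verify that $(\beta')^{-1}\beta$ simplifies to $\alpha$, so that the trivial choice $\mu=1_A$ meets the requirement of Proposition~\ref{prop:Frob-data-uniqueness}.
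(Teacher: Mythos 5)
Your proposal is correct and follows exactly the paper's own argument: the paper's proof is precisely the application of Proposition~\ref{prop:Frob-data-uniqueness} with $\mu=1_A$, $\alpha'=\id_A$, and $\beta'=\beta\circ\alpha^{-1}$, and your verification that both sides of \eqref{eq:Frob-data-uniqueness} reduce to $\alpha(b)$ (which uses $\alpha(B)=B$ to make $\beta\circ\alpha^{-1}$ a legitimate automorphism of $B$) is the right bookkeeping. Your added remarks on well-definedness and on why triviality of the grading forces $(\lambda,\pi)=(0,0)$ are fine elaborations of details the paper leaves implicit.
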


\begin{proof}
  This follows immediately from Proposition~\ref{prop:Frob-data-uniqueness}, taking $\mu=1_A$, $\alpha' = \id_A$ and $\beta' = \beta \circ \alpha^{-1}$.
\end{proof}

\begin{rem}
  The converse to Corollary~\ref{cor:Frob-Nakayama-case} is false.  For example, suppose $\F$ is a field and let $S_n$ denote the symmetric group on $n$ elements.  Then $\F S_n$ is a Frobenius algebra with Nakayama automorphism $\psi_n$ given by conjugation by the longest element.  It follows from Corollary~\ref{cor:Frobalg-implies-Frobext} that $\F S_n$ is a $(\psi_n,\psi_m)$-Frobenius extension of $\F S_m$ for $m < n$.  Note that $\psi_n(s_1)=s_{n-1}$, and hence $\psi_n$ does not fix $\F S_m$.  However, if we take $\mu$ to be the longest element of $\F S_n$, $\alpha'=\id_A$, and $\beta'=\psi_m$ in Proposition~\ref{prop:Frob-data-uniqueness}, we see that $\F S_n$ is an $(\id_A, \psi_m)$-Frobenius extension of $\F S_m$.  Nonetheless, there do exist extensions $A$ of $B$ which are $(\alpha,\beta)$-Frobenius for some $\alpha,\beta$, but are not $(\id_A,\beta')$-extensions for any automorphism $\beta'$ of $B$ (see Example \ref{eg:Nilcoxeter}).
\end{rem}

\begin{prop} \label{prop:Frob-data-uniqueness2}
  Let $A$ be an $(\alpha,\beta)$-Frobenius extension of $B$ and let
  \[
    \Phi \colon \prescript{}{A}A_B \cong \Hom^\rL_B \left( \prescript{\beta}{B}A_A^\alpha, \{\lambda,\pi\} \prescript{}{B}B_B \right)
  \]
  be an isomorphism of $(A,B)$-bimodules.  Then a second map
  \[
    \Phi' \colon \prescript{}{A}A_B \to \Hom^\rL_B \left( \prescript{\beta}{B}A_A^\alpha, \{\lambda,\pi\} \prescript{}{B}B_B \right)
  \]
  is an isomorphism of $(A,B)$-bimodules if and only if there is an invertible element $\mu \in A_{0,0}$ that commutes with all elements of $B$, and such that
  \[
    \Phi^{-1} \Phi' (a) = a \mu \quad \text{for all } a \in A.
  \]
\end{prop}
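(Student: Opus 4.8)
The plan is to analyze the composite $\vartheta := \Phi^{-1}\Phi'$, which is an endomorphism of the $(A,B)$-bimodule $\prescript{}{A}A_B$ whenever $\Phi'$ is a bimodule homomorphism, and to show that such endomorphisms are exactly right multiplications by elements of the centralizer $C_A(B)$ in degree $(0,0)$, with $\Phi'$ an isomorphism precisely when that element is invertible. First I would observe that $\Phi'$ being a homomorphism of $(A,B)$-bimodules is equivalent (since $\Phi$ is a bimodule isomorphism) to $\vartheta := \Phi^{-1}\Phi' \colon \prescript{}{A}A_B \to \prescript{}{A}A_B$ being a homomorphism of $(A,B)$-bimodules; likewise $\Phi'$ is an isomorphism iff $\vartheta$ is. So the statement reduces to: the $(A,B)$-bimodule automorphisms of $\prescript{}{A}A_B$ are precisely the maps $a \mapsto a\mu$ for $\mu \in A_{0,0}$ invertible and commuting with $B$, and the bimodule endomorphisms are the maps $a \mapsto a\mu$ for $\mu \in C_A(B) \cap A_{0,0}$ arbitrary.

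Next I would pin down $\mu$. Since $\vartheta$ is a left $A$-module map, for all $a \in A$ we have $\vartheta(a) = \vartheta(a \cdot 1_A) = a\,\vartheta(1_A)$, so setting $\mu := \vartheta(1_A)$ gives $\vartheta(a) = a\mu$ for all $a$, i.e.\ $\Phi^{-1}\Phi'(a) = a\mu$. Because $\Phi$ and $\Phi'$ are both degree-$(0,0)$ maps (morphisms in $A\md$ — recall the codomains of $\Phi,\Phi'$ are identified via \eqref{eq:Lhom-shift-identification} with $\Hom_B^{\rL}(\prescript{\beta}{B}A_A^\alpha, \{\lambda-\mu,\pi\}\dots)$ in the appropriate degree, and the hypothesis places both in the same Hom-space), the composite $\vartheta$ is degree-preserving, hence $\mu = \vartheta(1_A) \in A_{0,0}$. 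That $\mu$ commutes with $B$ in the required (super)sense comes from the right $B$-linearity of $\vartheta$: for $b \in B$, $\mu b = \vartheta(1_A) b = \vartheta(1_A \cdot b) = \vartheta(b) = \vartheta(b \cdot 1_A) = b\,\vartheta(1_A) = b\mu$, where the left action of $b$ on $\prescript{}{A}A_B$ is just left multiplication in $A$; since both $\mu$ and $b$ are handled with their honest (ungraded-in-the-relevant-factor) signs, one gets $\mu b = (-1)^{\bar\mu\bar b} b\mu$, i.e.\ $\mu \in C_A(B)$, and as $\bar\mu = 0$ this is genuine commutativity.

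Finally I would handle invertibility, which is the only step requiring a small argument rather than bookkeeping. If $\vartheta = \Phi^{-1}\Phi'$ is an isomorphism then right multiplication by $\mu$ is a bijection of $A$; in particular it is surjective, so $1_A = a\mu$ for some $a$, giving $\mu$ a left inverse, and the standard trick $(1_A - \mu a)\mu = \mu - \mu a \mu = \mu - \mu\cdot 1_A = 0$ together with injectivity of right multiplication by $\mu$ yields $\mu a = 1_A$, so $\mu$ is invertible in $A$ (this is the same argument used for $\rho(1_A)$ in the proof of Proposition~\ref{prop:Frob-data-uniqueness}). Conversely, if $\mu \in A_{0,0}$ is invertible and commutes with $B$, then $a \mapsto a\mu$ is a well-defined degree-$(0,0)$ endomorphism of $\prescript{}{A}A_B$ (left $A$-linearity is associativity; right $B$-linearity and degree-preservation use $\mu \in C_A(B)\cap A_{0,0}$) with inverse $a \mapsto a\mu^{-1}$, so $\Phi' := \Phi \circ (a \mapsto a\mu)$ is a bimodule isomorphism; and conversely any $\Phi'$ with $\Phi^{-1}\Phi'(a) = a\mu$ factors this way. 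The main obstacle is purely notational: keeping the twistings $\alpha,\beta$ and the shift $\{\lambda,\pi\}$ from contaminating the computation — but these live entirely inside the fixed codomain Hom-space and are identical for $\Phi$ and $\Phi'$, so they cancel in $\vartheta = \Phi^{-1}\Phi'$ and play no role; the substantive content is just the "endomorphisms of the regular bimodule are right multiplications, invertible ones by units" principle applied in the graded-super category.
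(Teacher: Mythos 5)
Your proof is correct and takes essentially the same route as the paper's: the paper simply specializes the proof of Proposition~\ref{prop:Frob-data-uniqueness} to $\alpha=\alpha'$, $\beta=\beta'$, where $\rho=(\Phi')^{-1}\Phi$ is the inverse of your $\vartheta$, $\mu=\rho(1_A)^{-1}=\Phi^{-1}\Phi'(1_A)$, and both the commutation with $B$ via bimodule linearity and the left-inverse-implies-two-sided-inverse trick are exactly the steps you give. Your write-up is just more self-contained and slightly more explicit about the converse direction, which is fine.
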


\begin{proof}
  Consider the proof of Proposition~\ref{prop:Frob-data-uniqueness} with $\alpha=\alpha'$ and $\beta=\beta'$, so that $\varsigma = \id_B$ and $\sigma = \id_A$.  Then the map $\Gamma$ in the proof of Proposition~\ref{prop:Frob-data-uniqueness} is the identity map.  Moreover, it is shown there that $\rho(1_A)$ is invertible and that, for all $b \in B$, we have $ \rho(1_A) \varsigma(b)=\sigma(b)\rho(1_A)$, and hence $\rho(1_A)$ commutes with all elements of $B$.  Finally, we have
  \[
    a\rho(1_A)^{-1} = a\Phi^{-1} \Phi'(1_A) = \Phi^{-1}\Phi'(a),
  \]
  and so taking $\mu=\rho(1_A)^{-1}$ completes the proof of the proposition.
\end{proof}

\begin{rem}
  In the case that $\alpha = \id_A$ and all gradings are trivial, Proposition~\ref{prop:Frob-data-uniqueness2} can be found in \cite[Prop.~3]{NT60}.
\end{rem}

%
\section{Trace characterization} \label{sec:Bil-Form}
%

In this section, we give an alternative characterization of twisted Frobenius extensions in terms of trace maps and bilinear forms.  We continue to let $A$ denote a ring and $B$ denote a subring of $A$.  We let $\alpha$ and $\beta$ denote automorphisms of $A$ and $B$, respectively, and fix $\lambda \in \Lambda$ and $\pi \in \Z_2$.

Recall the one-to-one correspondence between isomorphisms in \ref{L2} and those in \ref{R2} given by~\eqref{eq:left-right-correspondence}.  Note that for any $a_1,a_2 \in A$, we have $\beta^{-1}\Psi(a_1)(a_2) = (-1)^{\bar a_1 \bar a_2}\Phi(a_2)(a_1)$.  On the other hand, we have
\begin{gather*}
  \Psi(a_1)(a_2) = (\Psi(1_A)\cdot a_1)(a_2) = \Psi(1_A)(a_1\cdot a_2)=\Psi(1_A)(\alpha^{-1}(a_1)a_2), \quad \text{and} \\
  \Phi(a_2)(a_1) = (a_2\cdot \Phi(1_A))(a_1) = (-1)^{\bar a_1 \bar a_2} \Phi(1_A)(a_1\cdot a_2) = (-1)^{\bar a_1 \bar a_2} \Phi(1_A)(a_1\alpha(a_2)).
\end{gather*}
Therefore
\begin{equation}\label{eq:Bil-Form}
  \beta^{-1}\Psi(1_A)(\alpha^{-1}(a_1)a_2) = \Phi(1_A)(a_1\alpha(a_2)) \quad \text{for all } a_1,a_2 \in A.
\end{equation}

\begin{prop} \label{prop:left-trace-map}
  If
  \[
    \Phi\colon \prescript{}{A}A_B \to \HOM_B^{\rL} \left( \prescript{\beta}{B}A_A^\alpha, \{\lambda,\pi\} \prescript{}{B}B_B \right)
  \]
  is an isomorphism of $(A,B)$-bimodules, then the map
  \[
    \tr^\rL := \Phi(1_A) \colon \prescript{\beta}{B}A^\alpha_B \to \{\lambda,\pi\} \prescript{}{B}B_B
  \]
  is a homomorphism of $(B,B)$-bimodules satisfying:
  \begin{description}[style=multiline, labelwidth=0.7cm]
    \item[\namedlabel{L3}{L3}] if $\tr^\rL(Aa)=0$ for some $a\in A$ then $a=0$,
    \item[\namedlabel{L4}{L4}] for every $\varphi \in \HOM_B^{\rL} (\prescript{\beta}{B}A, \{\lambda,\pi\}\prescript{}{B}B)$, there exists an $a \in A$ such that $\varphi = \tr^\rL \circ \pr{a}$.
  \end{description}

  Conversely, if there is a homomorphism of $(B,B)$-bimodules
  \[
    \tr^\rL \colon \prescript{\beta}{B}A^\alpha_B \to \{\lambda,\pi\} \prescript{}{B}B_B
  \]
  satisfying \ref{L3} and \ref{L4}, then $\tr^\rL= \Phi(1_A)$ for some $(A,B)$-bimodule isomorphism
  \[
    \Phi\colon \prescript{}{A}A_B \cong \Hom_B^{\rL} \left( \prescript{\beta}{B}A_A^\alpha, \{\lambda,\pi\} \prescript{}{B}B_B \right).
  \]
\end{prop}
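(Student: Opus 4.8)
The plan is to unwind the $(A,B)$-bimodule structure on $T:=\HOM_B^\rL(\prescript{\beta}{B}A_A^\alpha, \{\lambda,\pi\}\prescript{}{B}B_B)$ and to exploit the identity $\Phi(a)(x) = (-1)^{\bar a \bar x}\Phi(1_A)(x\alpha(a))$ for $a,x\in A$, which is established in the computation preceding~\eqref{eq:Bil-Form} (it follows from left $A$-linearity of $\Phi$ and the formula~\eqref{eq:left-hom-action} for the left $A$-action on $T$). Writing $\tr^\rL=\Phi(1_A)$, this identity reduces every claim to a computation with $\tr^\rL$. I would also use throughout that a morphism of graded bimodules has degree $(0,0)$, so $\Phi$ preserves degrees and $\tr^\rL$ is a homogeneous element of $T$ of degree $(0,0)$.

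\textbf{Forward direction.} That $\tr^\rL$ is a left $B$-module homomorphism $\prescript{\beta}{B}A \to \{\lambda,\pi\}\prescript{}{B}B$ is immediate: it is the defining condition~\eqref{bighom} for a degree-$(0,0)$ element of $T$. For right $B$-linearity, I would combine the displayed identity at $a=b\in B$, namely $\Phi(b)(x)=(-1)^{\bar b\bar x}\tr^\rL(x\alpha(b))$, with the fact that $\Phi$ is a homomorphism of right $B$-modules, so $\Phi(b)=\Phi(1_A)\cdot b=\tr^\rL\cdot b$; expanding $\tr^\rL\cdot b$ via~\eqref{eq:right-hom-action} gives $(\tr^\rL\cdot b)(x)=(-1)^{\bar b\bar x}\tr^\rL(x)b$, and comparing yields $\tr^\rL(x\alpha(b))=\tr^\rL(x)b$, which is exactly right $B$-linearity of $\tr^\rL\colon\prescript{\beta}{B}A^\alpha_B\to\{\lambda,\pi\}\prescript{}{B}B_B$. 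For~\ref{L3}: if $\tr^\rL(Aa)=0$, then by the displayed identity $\Phi(\alpha^{-1}(a))(x)=(-1)^{\bar a\bar x}\tr^\rL(xa)=0$ for all $x$, so $\Phi(\alpha^{-1}(a))=0$, whence $\alpha^{-1}(a)=0$ by injectivity of $\Phi$ and so $a=0$. For~\ref{L4}: an element $\varphi$ of the full graded group $\HOM_B^\rL(\prescript{\beta}{B}A, \{\lambda,\pi\}\prescript{}{B}B)$ is in particular an element of $T$ (the $\alpha$-twist affects only the right $A$-action on the Hom, not which maps are left $B$-linear), so surjectivity of $\Phi$ gives $a'$ with $\Phi(a')=\varphi$, and then the displayed identity shows $\varphi=\tr^\rL\circ\pr{\alpha(a')}$.

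\textbf{Converse direction.} Given $\tr^\rL$ as in the statement, I would define $\Phi\colon\prescript{}{A}A_B\to T$ by $\Phi(a)=a\cdot\tr^\rL$, using the left $A$-action~\eqref{eq:left-hom-action} on $T$; this is legitimate because $\tr^\rL$, being left $B$-linear, is a degree-$(0,0)$ element of $T$. Then $\Phi$ is a homomorphism of left $A$-modules by associativity of the $A$-action, and $\Phi(1_A)=1_A\cdot\tr^\rL=\tr^\rL$. Since $T$ is an $(A,B)$-bimodule, its left $A$- and right $B$-actions commute, so $\Phi$ is right $B$-linear as soon as $b\cdot\tr^\rL=\tr^\rL\cdot b$ for all $b\in B$; expanding both sides exactly as in the forward direction, this is equivalent to $\tr^\rL(x\alpha(b))=\tr^\rL(x)b$, which is the given right $B$-linearity of $\tr^\rL$. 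Finally, $\Phi$ is injective: $\Phi(a)=a\cdot\tr^\rL=0$ forces $\tr^\rL(x\alpha(a))=0$ for all $x$, i.e.\ $\tr^\rL(A\alpha(a))=0$, so $\alpha(a)=0$ by~\ref{L3} and $a=0$; and $\Phi$ is surjective because~\ref{L4} writes any $\varphi\in T$ as $\tr^\rL\circ\pr{a}$, which equals $\Phi(\alpha^{-1}(a))$ by the displayed identity. Thus $\Phi$ is an $(A,B)$-bimodule isomorphism with $\Phi(1_A)=\tr^\rL$.

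\textbf{Expected obstacle.} I do not anticipate a genuine difficulty; the work is bookkeeping with Koszul signs, degree shifts, and which automorphism twists which side of each bimodule. The one point demanding care is recognizing that the \emph{full} graded $\HOM_B^\rL(\prescript{\beta}{B}A_A^\alpha, \{\lambda,\pi\}\prescript{}{B}B_B)$ — not merely its degree-$(0,0)$ part — is the codomain of the bimodule isomorphism in~\ref{L2}, so that~\ref{L4} is literally the surjectivity of $\Phi$ (and its negation of~\ref{L3}, its injectivity). Once the identity $\Phi(a)(x)=(-1)^{\bar a\bar x}\tr^\rL(x\alpha(a))$ is in hand, both directions are essentially automatic.
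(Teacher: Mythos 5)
Your proof is correct and follows essentially the same route as the paper's: both hinge on the identity $\Phi(a)=\tr^\rL\circ\pr{\alpha(a)}$, with \ref{L3} and \ref{L4} amounting precisely to injectivity and surjectivity of $\Phi$, and the converse defined by the same formula. The only cosmetic difference is that you obtain right $B$-linearity of $\tr^\rL$ by comparing the two evaluations of $\Phi(b)=\Phi(1_A\cdot b)$, whereas the paper routes this step through the left--right correspondence $\Psi$ and identity~\eqref{eq:Bil-Form}; the two computations are equivalent bookkeeping.
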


\begin{proof}
  Since $\Phi(1_A) \in \Hom_B^{\rL}(\prescript{\beta}{B} A^\alpha_A, \{\lambda,\pi\} \prescript{}{B}B_B ) $, $\tr^\rL$ is a homomorphism of left $B$-modules.  Now, for $b \in B$ and $x\in A$, we have
  \begin{multline*}
    \tr^\rL(x\cdot b) = \Phi(1_A)(x \alpha(b)) = \beta^{-1} \Psi(1_A) (\alpha^{-1}(x)b)
    = \beta^{-1} (\Psi(1_A) (\alpha^{-1}(x))\beta(b)) \\
    = \beta^{-1} (\Psi(1_A) (\alpha^{-1}(x))) b
    = \Phi(1_A)(x)b
    = \tr^\rL(x) b,
  \end{multline*}
  where we used~\eqref{eq:Bil-Form} in the second and fifth equalities.  Thus, $\tr^\rL$ is also a homomorphism of right $B$-modules.

  If $\tr^\rL(Aa)=0$ for some $a \in A$ then $0 = \Phi(1_A)(Aa)= \pm \Phi(\alpha^{-1}(a))(A)$, and hence $\alpha^{-1}(a)=0$, so that $a=0$.  Thus $\tr^\rL$ satisfies property \ref{L3}.  Now let $\varphi \in \HOM_B^{\rL} (\prescript{\beta}{B}A, \{\lambda, \pi\} \prescript{}{B}B) = \HOM_B^{\rL} (\prescript{\beta}{B}A_A^\alpha, \{\lambda, \pi\} \prescript{}{B}B_B)$ (equality of abelian groups).  Then there exists $a' \in A$ such that $\varphi=\Phi(a')$.  Thus, for $x \in A$, we have
  \[
    \varphi = \Phi(a') = a' \cdot \Phi(1_A) = \Phi(1_A) \circ \pr{\alpha(a')} = \tr^\rL \circ \pr{\alpha(a')},
  \]
  and so taking $a=\alpha(a')$ shows that $\tr^\rL$ satisfies property \ref{L4}.

  On the other hand, let $\tr^\rL\colon \prescript{\beta}{B}A^\alpha_B \to \{\lambda, \pi\} \prescript{}{B}B_B $ be a homomorphism of $(B,B)$-bimodules satisfying \ref{L3} and \ref{L4}.  Define a map
  \[
    \Phi \colon \prescript{}{A}A_B \cong \HOM^\rL_B \left( \prescript{\beta}{B}A_A^\alpha, \{\lambda,\pi\} \prescript{}{B}B_B \right),\quad \Phi(a) = \tr^\rL \circ \pr{\alpha(a)}.
  \]
  The map $\Phi$ is clearly a homomorphism of left $A$-modules, and it is a homomorphism of right $B$-modules since $\tr^\rL$ is.
  \details{
    For $a,x \in A$ and $b \in B$, we have
    \[
      \Phi(a \cdot x) = \Phi(ax) = \tr^\rL \circ \pr{\alpha(ax)} = (-1)^{\bar x \bar a} \tr^\rL \circ \pr{\alpha(x)} \circ \pr{\alpha(a)} = (-1)^{\bar x \bar a} \Phi(x) \circ \pr{\alpha(a)} = a \cdot \Phi(x)
    \]
    and
    \begin{multline*}
      \Phi(x \cdot b) = \Phi(xb) = \tr^\rL \circ \pr{\alpha(xb)} = (-1)^{\bar x \bar b} \tr^\rL \circ \pr{\alpha(b)} \circ \pr{\alpha(x)} \\
      = (-1)^{(\bar x + \pi) \bar b} \big( \pr{b} \big) \circ \tr^\rL \circ \pr{\alpha(x)} =  (-1)^{(\bar x + \pi) \bar b} \big( \pr{b} \big) \circ \Phi(x) = \Phi(x) \cdot b,
    \end{multline*}
    where we have viewed $\tr^\rL$ and $\Phi$ as degree $(-\lambda,\pi)$ maps via the isomorphisms \eqref{eq:Lhom-shift-identification} and \eqref{eq:Rhom-shift-identification}.  Hence $\Phi$ is a homomorphism of $(A,B)$-bimodules.
  }
  Also, $\Phi$ is injective by \ref{L3} and surjective by \ref{L4}, and hence is an isomorphism.   It is clear that $\Phi(1_A)=\tr^\rL$.
\end{proof}

\begin{cor} \label{cor:left-trace-map}
  Assuming that \ref{L1} holds, $A$ is an $(\alpha,\beta)$-Frobenius extension of $B$ of degree $(-\lambda,\pi)$ if and only if there exists a homomorphism of $(B,B)$-bimodules $\tr^\rL\colon \prescript{\beta}{B}A^\alpha_B \to \{\lambda,\pi\} \prescript{}{B}B_B$ satisfying \ref{L3} and \ref{L4}.
\end{cor}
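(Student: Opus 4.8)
The plan is to obtain this corollary as an immediate consequence of Proposition~\ref{prop:left-trace-map} together with Definition~\ref{def:twisted-Frob-ext}. The first thing I would do is unwind the definition: saying that $A$ is an $(\alpha,\beta)$-Frobenius extension of $B$ of degree $(-\lambda,\pi)$ means precisely that conditions \ref{L1} and \ref{L2} both hold. Since \ref{L1} is assumed throughout the statement, the corollary reduces to showing that \ref{L2} is equivalent to the existence of a $(B,B)$-bimodule homomorphism $\tr^\rL \colon \prescript{\beta}{B}A^\alpha_B \to \{\lambda,\pi\}\prescript{}{B}B_B$ satisfying \ref{L3} and \ref{L4}. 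But this is exactly the content of Proposition~\ref{prop:left-trace-map}, so the two halves of that proposition give the two directions of the corollary.

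In more detail: for the forward direction I would assume \ref{L2}, i.e.\ fix an isomorphism of $(A,B)$-bimodules $\Phi \colon \prescript{}{A}A_B \cong \HOM^\rL_B(\prescript{\beta}{B}A_A^\alpha, \{\lambda,\pi\}\prescript{}{B}B_B)$, and invoke the first assertion of Proposition~\ref{prop:left-trace-map} to conclude that $\tr^\rL := \Phi(1_A)$ is a $(B,B)$-bimodule homomorphism satisfying \ref{L3} and \ref{L4}. For the converse, given such a $\tr^\rL$, I would apply the second assertion of Proposition~\ref{prop:left-trace-map} to produce an $(A,B)$-bimodule isomorphism $\Phi \colon \prescript{}{A}A_B \cong \HOM^\rL_B(\prescript{\beta}{B}A_A^\alpha, \{\lambda,\pi\}\prescript{}{B}B_B)$ (with $\Phi(1_A) = \tr^\rL$); this establishes \ref{L2}, and together with the assumed \ref{L1} it says that $A$ is an $(\alpha,\beta)$-Frobenius extension of $B$ of degree $(-\lambda,\pi)$, as required.

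I do not anticipate any real obstacle: the substantive work has already been carried out in Proposition~\ref{prop:left-trace-map}, and this corollary is simply a restatement of that proposition in the terminology of Definition~\ref{def:twisted-Frob-ext}. The only point that requires a moment's care is the bookkeeping of the degree: the shift $\{\lambda,\pi\}$ appearing in \ref{L2} corresponds, via the identification \eqref{eq:Lhom-shift-identification}, to the bimodule isomorphism having degree $(-\lambda,\pi)$, which matches the degree convention fixed in Definition~\ref{def:twisted-Frob-ext}, so the two uses of the phrase ``degree $(-\lambda,\pi)$'' are consistent.
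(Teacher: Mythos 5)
Your proposal is correct and is exactly the argument the paper intends: the corollary is stated without proof precisely because it is the immediate combination of Definition~\ref{def:twisted-Frob-ext} (which makes the extension condition equal to \ref{L1} plus \ref{L2}) with the two halves of Proposition~\ref{prop:left-trace-map}. Your remark about the degree bookkeeping via \eqref{eq:Lhom-shift-identification} is also consistent with the paper's conventions.
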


The following is the right-module analogue of Proposition~\ref{prop:left-trace-map}.

\begin{prop}\label{prop:right-trace-map}
  If
  \[
    \Psi \colon \prescript{}{B}A_A \to \HOM_B^{\rR} \left( \prescript{\alpha^{-1}}{A}A_B^{\beta^{-1}}, \{\lambda,\pi\} \prescript{}{B}B_B \right)
  \]
  is an isomorphism of $(B,A)$-bimodules, then the map
  \[
    \tr^\rR := \Psi(1_A) \colon \prescript{\alpha^{-1}}{B}A^{\beta^{-1}}_B \to \{\lambda,\pi\} \prescript{}{B}B_B
  \]
  is a homomorphism of $(B,B)$-bimodules satisfying:
  \begin{description}[style=multiline, labelwidth=0.7cm]
    \item[\namedlabel{R3}{R3}] if $\tr^\rR(aA)=0$ for some $a\in A$ then $a=0$,
    \item[\namedlabel{R4}{R4}] for every $\varphi \in \HOM_B^{\rR} (A_B^{\beta^{-1}},B_B )$, there exists an $a \in A$ such that $\varphi = \tr^\rR \circ \pl{a}$.
  \end{description}

  Conversely, if there is a homomorphism of $(B,B)$-bimodules
  \[
    \tr^\rR \colon \prescript{\alpha^{-1}}{B}A^{\beta^{-1}}_B \to \{\lambda,\pi\} \prescript{}{B}B_B
  \]
  satisfying \ref{R3} and \ref{R4}, then $\tr^\rR = \Psi(1_A)$ for some $(B,A)$-bimodule isomorphism
  \[
    \Psi \colon \prescript{}{B}A_A \to \Hom_B^{\rR} \left( \prescript{\alpha^{-1}}{A}A_B^{\beta^{-1}}, \{\lambda,\pi\} \prescript{}{B}B_B \right).
  \]
\end{prop}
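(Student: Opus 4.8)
The plan is to run the proof of Proposition~\ref{prop:left-trace-map} in a mirror: interchange ``left'' and ``right'' throughout and replace $\alpha,\beta$ by $\alpha^{-1},\beta^{-1}$. The dictionary between the two sides is supplied by the explicit bijection \eqref{eq:left-right-correspondence} between the isomorphisms in \ref{L2} and those in \ref{R2}, together with the bilinear-form identity \eqref{eq:Bil-Form}. One could alternatively obtain the statement by transporting Proposition~\ref{prop:left-trace-map} through \eqref{eq:left-right-correspondence}, but the direct mirror is shorter and is what I would write.

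For the forward implication, set $\tr^\rR := \Psi(1_A)$. Since $\Psi(1_A)$ is, after the identification \eqref{eq:Rhom-shift-identification}, an element of $\Hom_B^\rR(\prescript{\alpha^{-1}}{A}A_B^{\beta^{-1}}, \{\lambda,\pi\}\prescript{}{B}B_B)$, it is automatically a homomorphism of right $B$-modules. For left $B$-linearity I would use that $\Psi$ is simultaneously a left $B$-module map and a right $A$-module map: comparing $\Psi(b\cdot 1_A)$ with $\Psi(1_A\cdot b)$ for $b\in B$ yields exactly the identity $\tr^\rR(\alpha^{-1}(b)x) = b\cdot\tr^\rR(x)$, which is left $B$-linearity of $\tr^\rR\colon \prescript{\alpha^{-1}}{B}A^{\beta^{-1}}_B \to \{\lambda,\pi\}\prescript{}{B}B_B$ (alternatively one can route this computation through $\Phi(1_A)$ via \eqref{eq:Bil-Form}, as in the left case). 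For \ref{R3}, expand $\Psi(\alpha(a))$ in terms of $\Psi(1_A)$ using the right $A$-action on the $\HOM$-space, getting $\Psi(\alpha(a))(x) = \tr^\rR(ax)$, so $\tr^\rR(aA)=0$ gives $\Psi(\alpha(a))=0$, whence $a=0$ by injectivity of $\Psi$. For \ref{R4}, given $\varphi$, surjectivity of $\Psi$ produces $a'\in A$ with $\varphi=\Psi(a')= \tr^\rR\circ\pl{\alpha^{-1}(a')}$, so $\varphi = \tr^\rR\circ\pl{a}$ with $a=\alpha^{-1}(a')$.

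For the converse, given a $(B,B)$-bimodule homomorphism $\tr^\rR$ satisfying \ref{R3} and \ref{R4}, I would define $\Psi\colon \prescript{}{B}A_A\to\HOM_B^\rR(\prescript{\alpha^{-1}}{A}A_B^{\beta^{-1}},\{\lambda,\pi\}\prescript{}{B}B_B)$ by $\Psi(a)=\tr^\rR\circ\pl{\alpha^{-1}(a)}$, mirroring the formula $\Phi(a)=\tr^\rL\circ\pr{\alpha(a)}$ from the proof of Proposition~\ref{prop:left-trace-map}. Then $\Psi$ is a homomorphism of right $A$-modules because $\pl{\alpha^{-1}(\cdot)}$ is compatible with the $A$-action \eqref{eq:right-rhom-action}; it is a homomorphism of left $B$-modules because $\tr^\rR$ is; it is injective by \ref{R3}; it is surjective by \ref{R4}; and $\Psi(1_A)=\tr^\rR$ by construction.

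The step I expect to demand the most care is the bookkeeping of Koszul signs and of the twists $\alpha^{-1},\beta^{-1}$ --- in particular, tracking how the module actions \eqref{eq:right-rhom-action} and \eqref{eq:left-rhom-action} on $\HOM_B^\rR(\prescript{\alpha^{-1}}{A}A_B^{\beta^{-1}},\{\lambda,\pi\}\prescript{}{B}B_B)$ interact with the convention that $\pl{a}$ and $\pr{a}$ always denote the \emph{untwisted} multiplication operators, and checking that the shift $\{\lambda,\pi\}$ is transported correctly. Conceptually nothing new happens relative to Proposition~\ref{prop:left-trace-map}, so I would keep these verifications terse (or, in a pinch, simply assert that the argument is analogous to that of Proposition~\ref{prop:left-trace-map}).
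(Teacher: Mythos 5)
Your proposal is correct and is exactly the argument the paper intends: the paper omits the proof of this proposition entirely, stating only that it is ``similar to that of Proposition~\ref{prop:left-trace-map}'', and your mirrored version (with $\pr{}$ replaced by $\pl{}$, $\alpha,\beta$ by their inverses, and the actions \eqref{eq:right-rhom-action}--\eqref{eq:left-rhom-action} in place of \eqref{eq:left-hom-action}--\eqref{eq:right-hom-action}) supplies precisely the omitted details, including the correct formulas $\Psi(a)=\tr^\rR\circ\pl{\alpha^{-1}(a)}$ and $\Psi(1_A)=\tr^\rR$.
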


\begin{proof}
  The proof is similar to that of Proposition~\ref{prop:left-trace-map} and will therefore be omitted.
\end{proof}

\begin{cor}\label{cor:right-trace-map}
  Assuming that \ref{R1} holds, $A$ is an $(\alpha,\beta)$-Frobenius extension of $B$ of degree $(-\lambda,\pi)$ if and only if there exists a homomorphism of $(B,B)$-bimodules $\tr^\rR\colon \prescript{\alpha^{-1}}{B}A^{\beta^{-1}}_B \to \{\lambda,\pi\} \prescript{}{B}B_B$ satisfying \ref{R3} and \ref{R4}.
\end{cor}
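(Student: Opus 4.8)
The plan is to obtain this as a formal consequence of Proposition~\ref{prop:right-trace-map}, in exactly the same way that Corollary~\ref{cor:left-trace-map} follows from Proposition~\ref{prop:left-trace-map}. By Definition~\ref{def:twisted-Frob-ext}, combined with the equivalence of \ref{R1} and \ref{R2} with \ref{L1} and \ref{L2} established in Proposition~\ref{prop:Frob-LR-equivalence}, the assertion ``$A$ is an $(\alpha,\beta)$-Frobenius extension of $B$ of degree $(-\lambda,\pi)$'' is precisely the conjunction of \ref{R1} and \ref{R2}. Since \ref{R1} is assumed, the corollary reduces to the claim that, given \ref{R1}, condition \ref{R2} holds if and only if there is a $(B,B)$-bimodule homomorphism $\tr^\rR \colon \prescript{\alpha^{-1}}{B}A^{\beta^{-1}}_B \to \{\lambda,\pi\}\prescript{}{B}B_B$ satisfying \ref{R3} and \ref{R4}, and this is exactly what Proposition~\ref{prop:right-trace-map} provides.

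Concretely, for the forward direction I would start from an isomorphism of $(B,A)$-bimodules $\Psi \colon \prescript{}{B}A_A \to \HOM_B^{\rR}(\prescript{\alpha^{-1}}{A}A_B^{\beta^{-1}}, \{\lambda,\pi\}\prescript{}{B}B_B)$, which is what \ref{R2} asserts, and apply the first half of Proposition~\ref{prop:right-trace-map} to get $\tr^\rR := \Psi(1_A)$, a $(B,B)$-bimodule homomorphism satisfying \ref{R3} and \ref{R4}. Conversely, given such a $\tr^\rR$, the second half of Proposition~\ref{prop:right-trace-map} produces a $(B,A)$-bimodule isomorphism $\Psi$ of the required shape, i.e.\ establishes \ref{R2}. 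Together with the standing hypothesis \ref{R1} and Definition~\ref{def:twisted-Frob-ext}, this gives both implications.

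I do not expect a genuine obstacle: all of the work has already been absorbed into Proposition~\ref{prop:right-trace-map}, so the corollary is essentially a matter of bookkeeping. The one point worth stating carefully is that Proposition~\ref{prop:right-trace-map} is phrased without any reference to \ref{R1}; hence one must separately invoke the assumed condition \ref{R1} in order to pass from ``there exists an isomorphism $\Psi$ as in \ref{R2}'' to ``$A$ is an $(\alpha,\beta)$-Frobenius extension of $B$'', and one should note that the degree shift $\{\lambda,\pi\}$ appearing in \ref{R2} is precisely the degree $(-\lambda,\pi)$ recorded in Definition~\ref{def:twisted-Frob-ext}, as explained in the remark following that definition.
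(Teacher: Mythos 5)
Your proposal is correct and matches the paper's (implicit) argument exactly: the corollary is stated in the paper as an immediate consequence of Proposition~\ref{prop:right-trace-map} together with Definition~\ref{def:twisted-Frob-ext} via conditions \ref{R1} and \ref{R2}, which is precisely the bookkeeping you carry out. Your remark about needing \ref{R1} separately, since Proposition~\ref{prop:right-trace-map} only addresses \ref{R2}, is the right point to flag.
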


\begin{defin}[Trace map]
  A homomorphism of $(B,B$)-bimodules
  \[
    \prescript{\beta}{B}{A}^\alpha_B \to \{\lambda,\pi\} \prescript{}{B}B_B \quad \left( \text{resp.\ }
    \prescript{\alpha^{-1}}{B}A_B^{\beta^{-1}} \to \{\lambda,\pi\} \prescript{}{B}B_B \right)
  \]
  satisfying \ref{L3} and \ref{L4} (resp.\ \ref{R3} and \ref{R4}) will be referred to as a \emph{left} (resp.\ \emph{right}) \emph{$(\alpha,\beta)$-trace map of degree $(-\lambda,\pi)$}.  We will often simply use the terms \emph{left trace map} and \emph{right trace map} when $\alpha$, $\beta$, $\lambda$, and $\pi$ are clear from the context, or when we do not wish to specify them.  We use the term \emph{trace map} to mean left trace map.  In such cases, we will sometimes use the notation $\tr$ instead of $\tr^\rL$.
\end{defin}

\begin{lem}\label{lem:left-right-trace-map}
  Let $A$ be an $(\alpha,\beta)$-Frobenius extension of $B$ of degree $(-\lambda,\pi)$.  Then a map $\tr^\rR\colon \prescript{\alpha^{-1}}{B}A_B^{\beta^{-1}} \to \{\lambda,\pi\} \prescript{}{B}B_B$ is a right $(\alpha,\beta)$-trace map of degree $(-\lambda,\pi)$ if and only if $\tr^\rR = \beta \circ \tr^\rL \circ \alpha$ for some left $(\alpha,\beta)$-trace map $\tr^\rL$ of degree $(-\lambda,\pi)$.
\end{lem}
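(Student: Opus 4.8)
The plan is to route everything through the correspondences already established: Proposition~\ref{prop:left-trace-map} and Proposition~\ref{prop:right-trace-map} (which identify trace maps with bimodule isomorphisms) together with the bijection of Proposition~\ref{prop:Frob-LR-equivalence} (which identifies left and right such isomorphisms). First I would note that Proposition~\ref{prop:left-trace-map} says precisely that $\Phi \mapsto \Phi(1_A)$ is a bijection from the set of $(A,B)$-bimodule isomorphisms $\prescript{}{A}A_B \xrightarrow{\cong} \HOM_B^\rL(\prescript{\beta}{B}A_A^\alpha, \{\lambda,\pi\}\prescript{}{B}B_B)$ onto the set of left $(\alpha,\beta)$-trace maps of degree $(-\lambda,\pi)$: it is injective because any such $\Phi$ is a left $A$-module homomorphism, hence determined by its value at $1_A$, and it is surjective by the converse half of that proposition. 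In the same way, Proposition~\ref{prop:right-trace-map} gives that $\Psi \mapsto \Psi(1_A)$ is a bijection from the $(B,A)$-bimodule isomorphisms $\prescript{}{B}A_A \xrightarrow{\cong} \HOM_B^\rR(\prescript{\alpha^{-1}}{A}A_B^{\beta^{-1}}, \{\lambda,\pi\}\prescript{}{B}B_B)$ onto the set of right $(\alpha,\beta)$-trace maps of degree $(-\lambda,\pi)$.

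Next I would invoke Proposition~\ref{prop:Frob-LR-equivalence} together with the remark immediately following its proof: the formula~\eqref{eq:left-right-correspondence} gives a one-to-one correspondence $\Phi \leftrightarrow \Psi$ between the isomorphisms appearing in \ref{L2} and those in \ref{R2}. Composing the three bijections above produces a bijection between left and right $(\alpha,\beta)$-trace maps of degree $(-\lambda,\pi)$, and it remains only to identify it explicitly. For that I would specialize~\eqref{eq:Bil-Form} — which is exactly the relation~\eqref{eq:left-right-correspondence} written out on $\Phi(1_A)$ and $\Psi(1_A)$ — at $a_1 = 1_A$; since $\alpha^{-1}(1_A) = 1_A$ this collapses to $\beta^{-1}\Psi(1_A)(a_2) = \Phi(1_A)(\alpha(a_2))$ for all $a_2 \in A$, that is, $\tr^\rR = \beta \circ \tr^\rL \circ \alpha$ whenever $\tr^\rL = \Phi(1_A)$ and $\tr^\rR = \Psi(1_A)$ are the trace maps attached to a corresponding pair $\Phi,\Psi$. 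Both directions of the lemma then drop out: starting from a left trace map $\tr^\rL$, take the associated $\Phi$, pass to its partner $\Psi$ under~\eqref{eq:left-right-correspondence}, and observe $\Psi(1_A) = \beta \circ \tr^\rL \circ \alpha$, which is a right trace map by Proposition~\ref{prop:right-trace-map}; conversely, a right trace map is $\Psi(1_A)$ for some isomorphism $\Psi$ (Proposition~\ref{prop:right-trace-map}), whose partner $\Phi$ yields the left trace map $\tr^\rL := \Phi(1_A)$ with $\tr^\rR = \beta \circ \tr^\rL \circ \alpha$.

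I do not expect a real obstacle here. The one point to handle with care is to quote the remark after Proposition~\ref{prop:Frob-LR-equivalence}, so that~\eqref{eq:left-right-correspondence} is used as a genuine bijection between the isomorphisms of \ref{L2} and \ref{R2} rather than just as a recipe turning left isomorphisms into right ones; it is the surjectivity part of that statement that powers the ``only if'' direction. The remaining bookkeeping — that $\beta \circ \tr^\rL \circ \alpha$ genuinely has source $\prescript{\alpha^{-1}}{B}A_B^{\beta^{-1}}$ and target $\{\lambda,\pi\}\prescript{}{B}B_B$ and is a $(B,B)$-bimodule map of degree $(-\lambda,\pi)$ — is routine, since $\alpha$ and $\beta$ are degree-$(0,0)$ ring automorphisms, so pre- and post-composing with them merely turns the left $\beta$-twist and right $\alpha$-twist in the source of $\tr^\rL$ into the left $\alpha^{-1}$-twist and right $\beta^{-1}$-twist demanded by the statement; moreover this bookkeeping comes for free, since once we know $\beta\circ\tr^\rL\circ\alpha = \Psi(1_A)$, Proposition~\ref{prop:right-trace-map} already asserts it is a right trace map with exactly this source and target. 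An entirely self-contained alternative would be to skip~\eqref{eq:left-right-correspondence} and verify conditions \ref{R3} and \ref{R4} for $\beta \circ \tr^\rL \circ \alpha$ directly from \ref{L3} and \ref{L4}, using that conjugation by $\alpha$ sends $\pl{a}$ to $\pl{\alpha(a)}$ and $\pr{a}$ to $\pr{\alpha(a)}$; but going through the already-established equivalences is shorter.
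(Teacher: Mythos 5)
Your proposal is correct and follows essentially the same route as the paper: identify trace maps with bimodule isomorphisms via Propositions~\ref{prop:left-trace-map} and~\ref{prop:right-trace-map}, then use the correspondence~\eqref{eq:left-right-correspondence} and the resulting identity~\eqref{eq:Bil-Form} to read off $\Psi(1_A) = \beta \circ \Phi(1_A) \circ \alpha$ (the paper specializes~\eqref{eq:Bil-Form} at $a_2 = 1_A$ rather than $a_1 = 1_A$, but both collapse to the same relation). Your explicit attention to the surjectivity of the correspondence for the ``only if'' direction matches what the paper's omitted details do.
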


\begin{proof}
  Suppose $\tr^\rL$ is a left trace map, and define $\tr^\rR = \beta \circ \tr^\rL \circ \alpha$.  By Proposition \ref{prop:left-trace-map}, there is some isomorphism of $(A,B)$-bimodules  $\Phi\colon \prescript{}{A}A_B \to \Hom_B^{\rL} (\prescript{\beta}{B}A_A^\alpha, \{\lambda,\pi\} \prescript{}{B}B_B ) $ such that $\tr^\rL=\Phi(1_A)$.  Setting $a_2 = 1_A$ in equation~\eqref{eq:Bil-Form}, we see that $\tr^\rL= \beta^{-1} \circ \Psi(1_A) \circ \alpha^{-1}$ for some isomorphism of $(B,A)$-bimodules $ \Psi \colon \prescript{}{B}A_A \to \Hom_B^{\rR}(\prescript{\alpha^{-1}}{A}A_B^{\beta^{-1}},\prescript{}{B}B_B)$.  Then by Proposition~\ref{prop:right-trace-map}, $\Psi(1_A) = \beta \circ \tr^\rL \circ \alpha$ is a right trace map.  The proof of the reverse implication is analogous.
  \details{
    Suppose $\tr^\rR$ is a right trace map.  Then, by Proposition \ref{prop:right-trace-map}, there is an isomorphism of $(B,A)$-bimodules $ \Psi \colon \prescript{}{B}A_A \to \Hom_B^{\rR} (\prescript{\alpha^{-1}}{A}A_B^{\beta^{-1}}, \{\lambda,\pi\} \prescript{}{B}B_B)$ with $\tr^\rR=\Psi(1_A)$.  Setting $a_2 = 1_A$ in equation~\eqref{eq:Bil-Form} then gives $\tr^\rR=\beta \circ \Phi(1_A) \circ \alpha$, and noting that $\Phi(1_A)$ is a left $(\alpha,\beta)$-trace map by Proposition \ref{prop:left-trace-map} completes the proof.
  }
\end{proof}

We note that, thanks to Lemma~\ref{lem:left-right-trace-map}, any left trace map $\tr^\rL \colon \prescript{\beta}{B}A^\alpha_B \to \{\lambda,\pi\} \prescript{}{B}B_B$ will also satisfy property \ref{R3}.  Moreover, for every $\varphi \in \Hom_B^{\rR} (A_B^{\beta^{-1}}, \{\lambda,\pi\} B_B)$, there exists an $a \in A$ such that $\varphi = \beta \circ \tr^\rL \circ \pl{a}$.  Similarly,  any right trace map $\tr^\rR \colon \prescript{\alpha^{-1}}{B}A^{\beta^{-1}}_B \to \{\lambda,\pi\} \prescript{}{B}B_B $ will also satisfy property \ref{L3}.  Moreover, for every $\varphi \in \Hom_B^{\rL} (\prescript{\beta}{B}A, \{\lambda,\pi\} \prescript{}{B}B)$, there exists an $a \in A$ such that $\varphi =\beta^{-1} \circ \tr^\rR \circ \pr{a}$.

\begin{prop} \label{prop:trace-map-uniqueness}
  Let $A$ be an $(\alpha,\beta)$-Frobenius extension of $B$ of degree $(-\lambda,\pi)$ and let $\tr \colon \prescript{\beta}{B}A^\alpha_B \to \{\lambda,\pi\} \prescript{}{B}B_B$ be a trace map.  Then a second map $\tr' \colon \prescript{\beta}{B}A^\alpha_B \to \{\lambda,\pi\} \prescript{}{B}B_B$ is also a trace map if and only if there exists an invertible element $a \in C_A(\alpha(B))$ of degree $(0,0)$ such that $\tr' = \tr \circ \pr{a}$.
\end{prop}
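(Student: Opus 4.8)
The plan is to pass from the language of trace maps to that of bimodule isomorphisms using Proposition~\ref{prop:left-trace-map}, and then invoke the uniqueness statement Proposition~\ref{prop:Frob-data-uniqueness2}. Recall that Proposition~\ref{prop:left-trace-map} establishes a bijection between trace maps $\tr \colon \prescript{\beta}{B}A^\alpha_B \to \{\lambda,\pi\} \prescript{}{B}B_B$ and isomorphisms of $(A,B)$-bimodules $\Phi \colon \prescript{}{A}A_B \xrightarrow{\cong} \HOM_B^{\rL}(\prescript{\beta}{B}A_A^\alpha, \{\lambda,\pi\} \prescript{}{B}B_B)$, under which $\tr = \Phi(1_A)$ and, conversely, $\Phi$ is recovered as $\Phi(x) = \tr \circ \pr{\alpha(x)}$.

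For the forward implication, assume $\tr'$ is also a trace map, and let $\Phi$ and $\Phi'$ be the corresponding $(A,B)$-bimodule isomorphisms, so $\tr = \Phi(1_A)$ and $\tr' = \Phi'(1_A)$. By Proposition~\ref{prop:Frob-data-uniqueness2} there is an invertible element $\mu \in A_{0,0}$ that commutes with every element of $B$ and satisfies $\Phi^{-1}\Phi'(x) = x\mu$ for all $x \in A$; evaluating at $x = 1_A$ gives $\tr' = \Phi'(1_A) = \Phi(\mu) = \tr \circ \pr{\alpha(\mu)}$. I would then take $a := \alpha(\mu)$. Since $\alpha$ is grading-preserving, $a$ is an invertible element of $A_{0,0}$, and applying $\alpha$ to $\mu b = b\mu$ (valid for all $b \in B$) gives $a\,\alpha(b) = \alpha(b)\,a$ for all $b \in B$, i.e.\ $a \in C_A(\alpha(B))$ (the sign in the definition of $C_A$ is trivial since $\bar a = 0$). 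Thus $\tr' = \tr \circ \pr{a}$ with $a$ of the required form.

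For the converse, suppose $\tr' = \tr \circ \pr{a}$ with $a \in C_A(\alpha(B))$ invertible of degree $(0,0)$, and set $\mu := \alpha^{-1}(a)$, which is invertible of degree $(0,0)$. Applying $\alpha^{-1}$ to $a\,\alpha(b) = \alpha(b)\,a$ yields $\mu b = b\mu$ for all $b \in B$, so right multiplication $\rho_\mu \colon \prescript{}{A}A_B \to \prescript{}{A}A_B$, $x \mapsto x\mu$, is a degree-$(0,0)$ automorphism of $(A,B)$-bimodules: it is visibly left $A$-linear and invertible, and right $B$-linearity is exactly the condition that $\mu$ commutes with $B$. Letting $\Phi$ be the bimodule isomorphism attached to $\tr$, the composite $\Phi \circ \rho_\mu$ is an isomorphism of $(A,B)$-bimodules $\prescript{}{A}A_B \xrightarrow{\cong} \HOM_B^{\rL}(\prescript{\beta}{B}A_A^\alpha, \{\lambda,\pi\} \prescript{}{B}B_B)$, so by Proposition~\ref{prop:left-trace-map} the map $(\Phi\circ\rho_\mu)(1_A) = \Phi(\mu) = \tr \circ \pr{\alpha(\mu)} = \tr \circ \pr{a} = \tr'$ is a trace map, as desired.

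I expect the only genuine subtlety to be keeping track of where the twist $\alpha$ enters: the element $\mu$ delivered by Proposition~\ref{prop:Frob-data-uniqueness2} sits ``inside'' $\Phi$ and hence multiplies the argument of $\tr$ after $\alpha$ has been applied, so the element actually post-composed onto $\tr$ via $\pr{}$ is $\alpha(\mu)$ rather than $\mu$; this is precisely why the centralizer condition comes out as $C_A(\alpha(B))$ and not $C_A(B)$. The only computational point to check is that $\pr{a} \circ \pr{\alpha(x)} = \pr{\alpha(x) a}$, which holds with no sign because $\bar a = 0$, so that $\tr \circ \pr{a}$ corresponds under the bijection to the bimodule map $x \mapsto \Phi(x\mu)$.
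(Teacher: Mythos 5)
Your proof is correct and follows essentially the same route as the paper's: combine Proposition~\ref{prop:left-trace-map} (the bijection between trace maps and bimodule isomorphisms) with Proposition~\ref{prop:Frob-data-uniqueness2}, and observe that the element $\mu$ produced there yields $a=\alpha(\mu)\in C_A(\alpha(B))$. Your attention to why the twist $\alpha$ converts the centralizer condition from $C_A(B)$ to $C_A(\alpha(B))$ is exactly the point the paper's (omitted) details also make.
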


\begin{proof}
  This follows from Propositions~\ref{prop:Frob-data-uniqueness2} and~\ref{prop:left-trace-map}.
  \details{
    By Proposition~\ref{prop:left-trace-map}, there exist isomorphisms of $(A,B)$-bimodules $\Phi$, $\Phi'$ with $\tr = \Phi(1_A)$ and $\tr' = \Phi'(1_A)$.  By Proposition~\ref{prop:Frob-data-uniqueness2}, there exists some invertible $\mu \in A_{0,0}$ that commutes with every element of $B$ and such that $\Phi^{-1}\Phi'(a)=a\mu$ for all $a \in A$.  Then, if $x\in A$, we have
    \[
      \tr'(x) = \Phi'(1_A)(x) = \Phi(\mu)(x) = \Phi(\mu\cdot 1_A)(x) = (-1)^{\bar x \bar \mu} \Phi(1_A)(x\cdot \mu) = (-1)^{\bar x \bar \mu} \Phi(1_A)(x \alpha(\mu)) = (-1)^{\bar x \bar \mu} \tr(x\alpha(\mu)).
    \]
    Setting $a = \alpha(\mu)$, we obtain the forward implication.

    Conversely, defining a map $\tr':= \tr \circ \pr{\alpha(\mu)}$ for some $\mu \in C_A(B)$ will give a trace map of the form $\Phi'(1_A)$ where $\Phi^{-1}\Phi'(a)=a\mu$ as in Proposition~\ref{prop:Frob-data-uniqueness2}.
  }
\end{proof}

\begin{lem}
  There exists a trace map $\tr \colon \prescript{\beta}{B}A^\alpha_B \to \{\lambda,\pi\} \prescript{}{B}B_B $  if and only if there exists a $(B,B)$-bimodule homomorphism $\langle\ ,\ \rangle \colon \prescript{\beta}{B}A \times A^\alpha_B \to \{\lambda,\pi\} \prescript{}{B}B_B$ satisfying:
  \begin{enumerate}
    \item \label{lem-item:bilinear-form-invariance} $\langle xy,z\rangle = \langle x,yz\rangle$ for all $x,y,z \in A$,
    \item \label{lem-item:bilinear-form-nondegen} if $a\in A$ is such that $\langle x,a\rangle=0$ for all $x\in A$, then $a=0$,
    \item \label{lem-item:bilinear-form-surj} for every $\varphi \in \Hom_B^{\rL} (\prescript{\beta}{B}A,\prescript{}{B}B)$ there exists an $a \in A$ such that $\varphi(x) = \langle x,a\rangle $ for all $x \in A$.
  \end{enumerate}
  Consequently, if \ref{L1} is satisfied, then $A$ is an $(\alpha,\beta)$-Frobenius extension of $B$ of degree $(-\lambda,\pi)$ if and only if such a bilinear map exists.
\end{lem}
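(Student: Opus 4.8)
The plan is to set up a dictionary between left $(\alpha,\beta)$-trace maps $\tr \colon \prescript{\beta}{B}A^\alpha_B \to \{\lambda,\pi\} \prescript{}{B}B_B$ of degree $(-\lambda,\pi)$ and bilinear maps of the form in the statement, and then to invoke Corollary~\ref{cor:left-trace-map} for the final assertion.

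\emph{From a trace map to a bilinear map.} Suppose $\tr$ is a trace map and define $\langle x,y\rangle := \tr(xy)$ for $x,y \in A$. Since $\tr$ is a homomorphism of $(B,B)$-bimodules $\prescript{\beta}{B}A^\alpha_B \to \{\lambda,\pi\} \prescript{}{B}B_B$ and, for $b \in B$, one has $\beta(b)(xy) = (\beta(b)x)y$ and $(xy)\alpha(b) = x(y\alpha(b))$, the map $\langle\ ,\ \rangle$ is biadditive, left $B$-linear in the first variable for the twisted action $b \cdot x = \beta(b)x$, and right $B$-linear in the second variable for the twisted action $y \cdot b = y\alpha(b)$; in other words it is a $(B,B)$-bimodule homomorphism $\prescript{\beta}{B}A \times A^\alpha_B \to \{\lambda,\pi\} \prescript{}{B}B_B$. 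Condition~\eqref{lem-item:bilinear-form-invariance} is just associativity of multiplication in $A$: $\langle xy,z\rangle = \tr(xyz) = \langle x,yz\rangle$. For~\eqref{lem-item:bilinear-form-nondegen}, the hypothesis $\langle x,a\rangle = 0$ for all $x \in A$ reads $\tr(Aa) = 0$, whence $a = 0$ by~\ref{L3}. For~\eqref{lem-item:bilinear-form-surj}, given $\varphi$, apply~\ref{L4} (after using the shift identifications of Section~\ref{sec:graded-superrings} to regard $\varphi$ in the relevant $\HOM$-space) to obtain $a \in A$ with $\varphi = \tr \circ \pr{a}$; unwinding the sign convention~\eqref{eq:sign-right-action} then yields $\varphi(x) = \tr(xa) = \langle x,a\rangle$ for all $x$, after absorbing a Koszul sign into $a$ if necessary.

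\emph{From a bilinear map to a trace map.} Conversely, given a bilinear map $\langle\ ,\ \rangle$ of the stated type satisfying~\eqref{lem-item:bilinear-form-invariance}--\eqref{lem-item:bilinear-form-surj}, set $\tr(x) := \langle 1_A, x\rangle$. Taking $x = 1_A$ in~\eqref{lem-item:bilinear-form-invariance} gives $\langle y,z\rangle = \langle 1_A, yz\rangle = \tr(yz)$, so $\langle\ ,\ \rangle$ is recovered from $\tr$ exactly as above. In particular $\tr$ is a $(B,B)$-bimodule homomorphism $\prescript{\beta}{B}A^\alpha_B \to \{\lambda,\pi\} \prescript{}{B}B_B$ (using $b \cdot 1_A = \beta(b)$ in $\prescript{\beta}{B}A$ and $1_A \cdot b = \alpha(b)$ in $A^\alpha_B$ together with the $(B,B)$-bimodule-hom axioms for $\langle\ ,\ \rangle$), and the equivalences established in the previous step turn~\eqref{lem-item:bilinear-form-nondegen} into~\ref{L3} and~\eqref{lem-item:bilinear-form-surj} into~\ref{L4}. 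Thus a trace map exists if and only if a bilinear map of the stated type exists. The last sentence of the lemma then follows at once by combining this equivalence with Corollary~\ref{cor:left-trace-map}, which asserts that, assuming~\ref{L1}, $A$ is an $(\alpha,\beta)$-Frobenius extension of $B$ of degree $(-\lambda,\pi)$ precisely when a left trace map exists.

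I expect the only genuinely non-formal point to be the translation between~\eqref{lem-item:bilinear-form-surj} and~\ref{L4}: one must carefully track the Koszul sign in~\eqref{eq:sign-right-action}, the degree shift $\{\lambda,\pi\}$, and the identifications~\eqref{eq:HOM-isom} and~\eqref{eq:Lhom-shift-identification} relating the degree-$(0,0)$ phrasing of~\eqref{lem-item:bilinear-form-surj} to the all-degrees phrasing of~\ref{L4}. Everything else is a direct consequence of associativity in $A$ and of the $(B,B)$-bimodule-hom property of $\tr$.
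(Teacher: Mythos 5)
Your proposal is correct and follows essentially the same route as the paper: define $\langle x,y\rangle := \tr(xy)$ in one direction, recover $\tr$ by plugging in $1_A$ in the other (your choice of slot, $\tr(x)=\langle 1_A,x\rangle$, agrees with the paper's $\langle x,1_A\rangle$ by condition~(\ref{lem-item:bilinear-form-invariance})), match (\ref{lem-item:bilinear-form-nondegen}) with \ref{L3} and (\ref{lem-item:bilinear-form-surj}) with \ref{L4}, and conclude via Corollary~\ref{cor:left-trace-map}. You are also right that the only delicate point is the sign/degree bookkeeping in translating (\ref{lem-item:bilinear-form-surj}) into \ref{L4}, which the paper likewise leaves as a routine verification.
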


\begin{proof}
  Given a trace map $\tr$, one sets $\langle x,y\rangle =\tr(xy)$ for all $x,y \in A$.  Conversely, given such a bilinear map $\langle\ ,\ \rangle$, one sets $\tr(x)=\langle x,1_A\rangle$.  The proof that these maps have the desired properties is a straightforward verification.
  \details{
    Given a trace map $\tr$, clearly $\langle x,y\rangle =\tr(xy)$ satisfies \eqref{lem-item:bilinear-form-invariance}.  On the other hand, if $\langle\ ,\ \rangle$ satisfies \eqref{lem-item:bilinear-form-invariance}, then we have $\langle x,y\rangle =\langle xy,1_A\rangle$ so that the trace map defined by $\tr(x) = \langle x,1_A \rangle$ satisfies $\langle x,y\rangle=\tr(xy)$.  Then property \eqref{lem-item:bilinear-form-nondegen} is equivalent to \ref{L2} and property \eqref{lem-item:bilinear-form-surj} is equivalent to \ref{L3}.
  }
  The final statement follows immediately from Corollary~\ref{cor:left-trace-map}.
\end{proof}

\begin{prop} \label{prop:dual-bases}
  We have that $A$ is an $(\alpha,\beta)$-Frobenius extension of $B$ of degree $(-\lambda,\pi)$ if and only if there exists a homomorphism of $(B,B)$-bimodules $\tr \colon \prescript{\beta}{B}A^\alpha_B \to \{\lambda,\pi\} \prescript{}{B}B_B$ and finite subsets $\{x_1,\dotsc, x_n\}$, $\{y_1,\dotsc,y_n\}$ of $A$ such that $(|y_i|, \bar y_i) = (\lambda - |x_i|, \pi + \bar x_i)$ for $i=1,\dotsc,n$, and
  \begin{equation} \label{eq:dual-bases}
    a = (-1)^{\pi \bar a} \sum_{i=1}^n (-1)^{\pi \bar x_i} \beta \big( \tr(ay_i) \big) x_i = \sum_{i=1}^n \alpha^{-1}(y_i) \tr \big( x_i\alpha(a) \big) \quad \text{for all } a \in A,
  \end{equation}
  where we view $\tr$ as a degree $(-\lambda,\pi)$ map to $\prescript{}{B}B_B$ via the isomorphisms \eqref{eq:Lhom-shift-identification} and \eqref{eq:Rhom-shift-identification}.  We call the sets $\{x_1,\dotsc,x_n\}$ and $\{y_1,\dotsc,y_n\}$ \emph{dual sets of generators} of $A$ over $B$.
\end{prop}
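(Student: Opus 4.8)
The plan is to reduce both directions to the trace-map characterization already established. Recall that, by Corollary~\ref{cor:left-trace-map} together with Propositions~\ref{prop:left-trace-map} and~\ref{prop:right-trace-map}, being an $(\alpha,\beta)$-Frobenius extension of degree $(-\lambda,\pi)$ is the same as having property \ref{L1} and a $(B,B)$-bimodule map $\tr$ satisfying \ref{L3} and \ref{L4} (and there is then a matching right trace map $\tr^\rR = \beta \circ \tr \circ \alpha$, by Lemma~\ref{lem:left-right-trace-map}). The new ingredient is the use of a \emph{finite} projective basis, available by \ref{L1} and Lemma~\ref{lem:proj-basis}.

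For the forward implication, fix a left trace map $\tr$. By \ref{L1} the left $B$-module $\prescript{\beta}{B}A$ has a finite left projective basis $\{y_i\}$, $\{\psi_i\}$, so that $a = \sum_i (-1)^{\bar a \bar y_i}\psi_i(a)y_i$ for all $a \in A$. Since each $\psi_i$ lies in $\HOM^\rL_B(\prescript{\beta}{B}A,\{\lambda,\pi\}\prescript{}{B}B)$ (as graded abelian groups), property \ref{L4} lets us write $\psi_i = \tr \circ \pr{x_i}$ for homogeneous elements $x_i \in A$; comparing degrees forces $(|y_i|,\bar y_i) = (\lambda - |x_i|, \pi + \bar x_i)$. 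Plugging $\psi_i = \tr \circ \pr{x_i}$ into the projective-basis identity and reorganizing the Koszul signs produces one of the two equalities in \eqref{eq:dual-bases}. Running the symmetric argument with \ref{R1}, the right trace map $\tr^\rR = \beta\circ\tr\circ\alpha$, Proposition~\ref{prop:right-trace-map}, and a finite right projective basis produces the other equality; that the \emph{same} pair $\{x_i\}$, $\{y_i\}$ can be arranged to witness both is where the explicit correspondence \eqref{eq:left-right-correspondence} between $\Phi$ and $\Psi$ enters, or else one deduces the second equality from the first by pairing against $\tr$ and using its two-sided non-degeneracy (\ref{L3} and \ref{R3}).

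For the converse, suppose $\tr$, $\{x_1,\dots,x_n\}$, $\{y_1,\dots,y_n\}$ are as in the statement. The first equality of \eqref{eq:dual-bases} shows that $\{x_i\}$ generates $A$ as a left $B$-module, and — using that $\tr$ is a homomorphism of $(B,B)$-bimodules and the degree hypothesis — that the coefficient functionals appearing there (possibly after a sign adjustment) form a finite left projective basis of $\prescript{}{B}A$; hence \ref{L1} holds, by Lemma~\ref{lem:proj-basis}. Substituting $\alpha^{-1}(a)$ for $a$ in the second equality gives $\alpha^{-1}(a) = \sum_i \alpha^{-1}(y_i)\,\tr(x_i a)$, so $\tr(Aa) = 0$ forces $\tr(x_i a) = 0$ for all $i$ and hence $a = 0$: this is \ref{L3}. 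Finally, for any $\varphi \in \HOM^\rL_B(\prescript{\beta}{B}A,\{\lambda,\pi\}\prescript{}{B}B)$, apply $\varphi$ to the first equality; using the left $B$-linearity of $\varphi$ to extract $\tr(ay_i)$ and then the right $B$-linearity of $\tr$ to absorb $\varphi(x_i) \in B$, one rewrites $\varphi(a) = \tr(\pr{c}(a))$ for the ($a$-independent) element $c = \sum_i \pm\, y_i\,\alpha(\varphi(x_i))$, so $\varphi = \tr \circ \pr{c}$: this is \ref{L4}. With \ref{L1}, \ref{L3}, and \ref{L4} verified, Corollary~\ref{cor:left-trace-map} shows that $A$ is an $(\alpha,\beta)$-Frobenius extension of $B$ of degree $(-\lambda,\pi)$, completing the proof.

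The genuinely delicate point is the forward-direction claim that one pair of dual generating sets realizes both equalities of \eqref{eq:dual-bases}; I anticipate dealing with it either by threading the projective-basis construction through the $\Phi \leftrightarrow \Psi$ dictionary of \eqref{eq:left-right-correspondence}, or — more cleanly — by showing directly that, given a $(B,B)$-bimodule map $\tr$ satisfying the degree constraints, each of the two equalities of \eqref{eq:dual-bases} implies the other. Beyond that, the work is entirely bookkeeping of Koszul signs and of the signs introduced by the twisted actions \eqref{eq:left-twist}, \eqref{eq:right-twist} and the shift functors.
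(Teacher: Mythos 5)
Your proposal is correct and follows essentially the same route as the paper: a finite left projective basis combined with \ref{L4} yields the first equality of \eqref{eq:dual-bases}, and the converse verifies \ref{L1}, \ref{L3}, and \ref{L4} exactly as you describe. The delicate point you flag is resolved in the paper by the second of your two options: the second equality is deduced from the first by expanding $\Phi(a)(z)$ against the decomposition $z = (-1)^{\pi\bar z}\sum_i (-1)^{\pi\bar x_i}\beta\big(\tr(zy_i)\big)x_i$ and invoking the injectivity of $\Phi$, so no separate right projective basis is needed.
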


\begin{proof}
  Suppose $A$ is an $(\alpha,\beta)$-Frobenius extension of $B$ of degree $(-\lambda,\pi)$, and let $\tr$ denote a trace map.  By Lemma~\ref{lem:proj-basis}, we can choose a left projective basis
  \[
    \{x_i \in \prescript{\beta}{B}A_{\mu_i,\gamma_i}\}_{i \in I} \quad \text{and} \quad \{\varphi_i \in \HOM_B^{\rL} (\prescript{\beta}{B}A, \prescript{}{B}B)_{-\mu_i,\gamma_i}\}_{i \in I}
  \]
  of $\prescript{\beta}{B}A$ with $I$ finite.  For $i \in I$, we can view $\varphi$ as an element of $\HOM_B^{\rL} (\prescript{\beta}{B}A, \{\lambda,\pi\} \prescript{}{B}B)_{\lambda-\mu_i,\pi+\gamma}$ via the isomorphism \eqref{eq:HOM-isom}.  Thus, by property \ref{L4}, there exists an element $y_i \in A_{\lambda-\mu_i,\pi+\gamma_i}$ such that $(-1)^{\pi \gamma_i} \varphi_i = \tr \circ \pr{y_i}$.  Then, for $a \in A$,
  \begin{equation} \label{eq:proj-basis-decomp}
    a = \sum_{i \in I} (-1)^{\bar a \gamma_i} \varphi_i(a)\cdot x_i
    = (-1)^{\pi \bar a} \sum_{i \in I} (-1)^{\pi \bar x_i} \tr(ay_i) \cdot x_i \\
    = (-1)^{\pi \bar a} \sum_{i \in I} (-1)^{\pi \bar x_i} \beta \big( \tr(ay_i) \big) x_i.
  \end{equation}

  By Proposition~\ref{prop:left-trace-map}, there exists an $(A,B)$-bimodule isomorphism
  \[
    \Phi \colon \prescript{}{A}A_B \cong \HOM^\rL_B(\prescript{\beta}{B}{A}^\alpha_A, \{\lambda, \pi\} \prescript{}{B}B_B),
  \]
  which we view as a degree $(-\lambda,\pi)$ map to $\HOM_B^{\rL}(\prescript{\beta}{B}A^\alpha_A, \prescript{}{B}B_B )$ via \eqref{eq:Lhom-shift-identification} and \eqref{eq:Rhom-shift-identification}, such that $\tr = \Phi(1_A)$.  Thus, for all $z \in \prescript{\beta}{B}A^\alpha_A$,
  \begin{multline*}
    \tr(zy_i) = \Phi(1_A)(z y_i) = (-1)^{\bar z \bar y_i} \Phi(1_A) \circ \pr{y_i}(z) \\
    = (-1)^{\bar z \bar y_i} \big( \alpha^{-1}(y_i) \cdot \Phi(1_A) \big) (z) = (-1)^{\bar z \bar y_i}  \Phi(\alpha^{-1}(y_i))(z)
  \end{multline*}
  and so, for any $\varphi \in \HOM_B^{\rL}(\prescript{\beta}{B}A^\alpha_A, \prescript{}{B}B_B )$, we have
  \begin{multline*}
    \varphi(z)
    = (-1)^{\bar z \pi} \sum_{i=1}^n (-1)^{\pi \gamma_i} \varphi \big( \beta(\tr(zy_i))x_i \big)
    = \sum_{i=1}^n (-1)^{(\bar z + \gamma_i)\pi+\bar\varphi(\bar z +\gamma_i)} \tr(zy_i) \varphi(x_i) \\
    = \sum_{i=1}^n (-1)^{\pi \gamma_i + \bar z \gamma_i+\bar \varphi(\bar z+\gamma_i)} \big( \Phi(\alpha^{-1}(y_i))(z) \big) \varphi(x_i).
  \end{multline*}
  In particular, if we take $\varphi=\Phi(a)$, we get
  \begin{multline*}
    \Phi(a)(z) = \sum_{i=1}^n (-1)^{\pi \gamma_i + \bar z \gamma_i + (\bar a + \pi)(\bar z +\gamma_i)} \big( \Phi(\alpha^{-1}(y_i))(z) \big) \big( \Phi(a)(x_i) \big) \\
    = \sum_{i=1}^n (-1)^{\pi \gamma_i + \gamma_i}\ \pr{\big( \Phi(a)(x_i) \big)} \circ \Phi \big( \alpha^{-1}(y_i) \big) (z)
    = \sum_{i=1}^n (-1)^{\gamma_i \bar a}\ \Phi \Big( \alpha^{-1}(y_i) \big( \Phi(a)(x_i) \big) \Big) (z) \\
    = \sum_{i=1}^n \Phi \Big( \alpha^{-1}(y_i) \tr(x_i \alpha(a)) \Big) (z)
  \end{multline*}
  and we conclude that $a =\sum_{i=1}^n \alpha^{-1}(y_i) \tr(x_i\alpha(a))$.

  Conversely, suppose we have a homomorphism of $(B,B)$-bimodules $\tr \colon \prescript{\beta}{B}A^\alpha_B \to \{\lambda, \pi\} \prescript{}{B}B_B$ and finite subsets $\{x_1,\dotsc, x_n\}$, $\{y_1,\dotsc,y_n\}$ of $A$ satisfying~\eqref{eq:dual-bases}.  Then, by reversing the string of equalities~\eqref{eq:proj-basis-decomp}, we see that
  \[
    \{x_i\}_{i \in I} \quad \text{and} \quad \{\varphi_i = (-1)^{\pi \gamma_i} \tr \circ \pr{y_i}\}_{i \in I}
  \]
  form a projective basis of $\prescript{\beta}{B}A$, and so property \ref{L1} holds by Lemma~\ref{lem:proj-basis}.   If $a\in A$ is such that $\tr(xa)=0$ for every $x\in A$ then
  \[
    \alpha^{-1}(a)=\sum_{i=1}^n \alpha^{-1}(y_i) \tr(x_i a) = 0,
  \]
  and so $a=0$.  Hence $\tr$ satisfies \ref{L3}.  Now suppose $\varphi \in \HOM_B^{\rL}(\prescript{\beta}{B}A, \{\lambda,\pi\} \prescript{}{B}B)$.  Then, for all $a\in A$, we have
  \begin{multline*}
    \varphi(a) = \varphi \left( (-1)^{\bar a \pi} \sum_{i=1}^n (-1)^{\pi \gamma_i} \beta \big( \tr(ay_i) \big) x_i \right)
    = \sum_{i=1}^n \tr (a y_i) \varphi(x_i) \\
    = \sum_{i=1}^n \tr \big( a y_i \alpha( \varphi(x_i) ) \big)
    = \tr \circ \pr{\left( \sum_{i=1}^n y_i \alpha( \varphi(x_i) ) \right)}(a),
  \end{multline*}
  and so $\tr$ satisfies property \ref{L4}.  Thus $A$ is an $(\alpha,\beta)$-Frobenius extension of $B$ by Corollary~\ref{cor:left-trace-map}.
\end{proof}

We note that if $A$ is an $(\alpha,\beta)$-Frobenius extension of $B$ and $\{x_1,\dotsc,x_n\}$ and $\{y_1,\dotsc,y_n\}$ are as in Proposition~\ref{prop:dual-bases}, then the elements $x_i$, along with the maps $(-1)^{\bar x_i} \beta \circ \tr \circ \pr y_i$, $i=1,\dotsc,n$, form a projective basis of $A$ as a left $B$-module, while the elements $\alpha^{-1}(y_i)$, along with the maps $\tr\circ \pl x_i\circ \alpha$, $i=1,\dotsc,n$ form a projective basis of $A$ as a right $B$-module.
\details{The index sets are finite, so we only need to verify the spanning property of projective bases.  The first assertion follows from
\[
  a = \sum_{i=1}^n (-1)^{(\bar a + \bar x_i) \pi} \beta (\tr(ay_i))x_i =\sum_{i=1}^n (-1)^{(\bar a + \pi) \bar x_i} (\beta \circ \tr \circ \pr y_i)(a) x_i,
\]
while the second follows from
\[
  a= \sum_{i=1}^n \alpha^{-1}(y_i) \tr(x_i\alpha(a)) = \sum_{i=1}^n \alpha^{-1}(y_i) (\tr \circ \pl x_i \circ \alpha)(a).
\]}

%
\section{Nakayama isomorphisms} \label{sec:Nakayama}
%

Throughout this section, we fix $A$ to be an $(\alpha,\beta)$-Frobenius extension of $B$ of degree $(-\lambda,\pi)$.  Let $\tr \colon \prescript{\beta}{B}{A}^\alpha_B \to \{\lambda,\pi\} \prescript{}{B}B_B $ be a trace map for this extension.  For any homogeneous element $c \in C_A(B)$,
we get a homomorphism of left $B$-modules $\sigma_c\colon \prescript{\beta}{B}A \to \{\lambda + |c|,\pi + \bar c\} \prescript{}{B}B $ defined by $\sigma_c = \tr \circ \pl{c}$.
\details{
  For $x \in \prescript{\beta}{B}A$, $b \in B$, and $c \in C_A(B)$, we have
  \[
    \sigma_c(b \cdot x) = \sigma_c (\beta(b)x) = \tr(c\beta(b)x) = (-1)^{\bar c \bar b} \tr(\beta(b)cx) = (-1)^{(\bar c + \pi) \bar b} b\, \tr(cx) = (-1)^{(\bar c + \pi) \bar b} b \sigma_c(x).
  \]
}

By property \ref{R3} and the linearity of the trace map, we have a grading-preserving injective linear map
\begin{align*}
  \sigma \colon C_A(B) \hookrightarrow \HOM_B^{\rL} \left( \prescript{\beta}{B}A,\{\lambda,\pi\} \prescript{}{B}B \right),\quad c \mapsto \sigma_c.
\end{align*}
Similarly, for any homogeneous $a \in A$, one obtains a homomorphism of left $B$-modules $\tau_a \colon \prescript{\beta}{B}A \to \{\lambda + |a|, \pi + \bar a\} \prescript{}{B}B$ defined by $\tau_a = \tr \circ \pr{a}$.
\details{
  For $x \in \prescript{\beta}{B}A$, $b \in B$, and $a \in A$, we have
  \[
    \tau_a(b \cdot x)
    = \tau_a(\beta(b)x)
    = (-1)^{(\bar b + \bar x) \bar a} \tr(\beta(b)x a)
    = (-1)^{(\bar b + \bar x) \bar a + \pi \bar b} b\, \tr(x a)
    = (-1)^{\bar b (\pi + \bar a)} b \tau_a(x).
  \]
}
By properties \ref{L3} and \ref{L4}, the grading-preserving linear map
\[
  \tau \colon A \to \HOM_B^{\rL} \left( \prescript{\beta}{B}A, \{\lambda,\pi\} \prescript{}{B}B \right),\quad a \mapsto \tau_a,
\]
is bijective.  Then we have an injective grading-preserving map
\begin{equation}
  \psi := \tau^{-1} \circ \sigma \colon C_A(B) \hookrightarrow A.
\end{equation}
We note that $\psi$ can be characterized as follows: for any $c \in C_A(B)$, $\psi(c)$ is the unique element of $A$ such that
\begin{equation} \label{eq:Nakayama-defining-property}
  \tr(cx) = (-1)^{\bar x \bar c} \tr(x\psi(c)) \quad \text{for all } x \in A.
\end{equation}
\details{
  For $c \in C_A(B)$ and $x \in A$, we have
  \[
    \tr(cx) = \sigma(c)(x) = \tau(\psi(c))(x) = (-1)^{\bar x \bar c}\, \tr(x \psi(c)).
  \]
  Uniqueness follows from \ref{L3}.
}

\begin{lem} \label{lem:nakayama-image}
  The image of the map $\psi$ is $C_A(\alpha(B))$.
\end{lem}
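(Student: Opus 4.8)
The plan is to prove the two inclusions $\im\psi\subseteq C_A(\alpha(B))$ and $C_A(\alpha(B))\subseteq\im\psi$ separately. Throughout I will use the characterization~\eqref{eq:Nakayama-defining-property} of $\psi$, the fact that $\tr$ is a homomorphism of $(B,B)$-bimodules (so that $\tr(\beta(b)x)=(-1)^{\pi\bar b}b\,\tr(x)$ and $\tr(x\alpha(b))=\tr(x)b$ for all $b\in B$, $x\in A$), and properties~\ref{L3} and~\ref{R3}, the latter holding for any left trace map as noted after Lemma~\ref{lem:left-right-trace-map}.

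For $\im\psi\subseteq C_A(\alpha(B))$, fix $c\in C_A(B)$. Since $\psi$ is grading-preserving we have $\overline{\psi(c)}=\bar c$, so by~\ref{L3} it is enough to check $\tr\bigl(x\,(\psi(c)\alpha(b)-(-1)^{\bar c\bar b}\alpha(b)\psi(c))\bigr)=0$ for all $x\in A$ and $b\in B$. Pulling $\alpha(b)$ out on the right using the right $B$-linearity of $\tr$, and then applying~\eqref{eq:Nakayama-defining-property} once directly and once with $x$ replaced by $x\alpha(b)$, one finds that both $\tr(x\psi(c)\alpha(b))$ and $(-1)^{\bar c\bar b}\tr(x\alpha(b)\psi(c))$ are equal to $(-1)^{\bar x\bar c}\tr(cx)\,b$; hence $\psi(c)\in C_A(\alpha(B))$.

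For $C_A(\alpha(B))\subseteq\im\psi$, fix $a\in C_A(\alpha(B))$ and set $\tau_a=\tr\circ\pr{a}\in\HOM_B^\rL(\prescript{\beta}{B}A,\{\lambda,\pi\}\prescript{}{B}B)$. A short computation using $a\in C_A(\alpha(B))$ and the right $B$-linearity of $\tr$ shows that $\tau_a$ additionally lies in $\HOM_B^\rR(A^\alpha_B,\{\lambda,\pi\}\prescript{}{B}B_B)$, i.e.\ it is also a homomorphism of right $B$-modules. Next I will argue that the map $\rho\colon A\to\HOM_B^\rR(A^\alpha_B,\{\lambda,\pi\}\prescript{}{B}B_B)$, $c\mapsto\tr\circ\pl{c}$, is a bijection: it is injective by~\ref{R3}, and one has $\rho(c)=\beta^{-1}\circ\Psi(c)\circ\alpha^{-1}$, where $\Psi\colon\prescript{}{B}A_A\xrightarrow{\ \sim\ }\HOM_B^\rR(\prescript{\alpha^{-1}}{A}A_B^{\beta^{-1}},\{\lambda,\pi\}\prescript{}{B}B_B)$ is the isomorphism of Proposition~\ref{prop:right-trace-map} with $\Psi(1_A)=\beta\circ\tr\circ\alpha$ (a right trace map by Lemma~\ref{lem:left-right-trace-map}), for which $\Psi(c)(x)=\beta(\tr(c\alpha(x)))$; since $f\mapsto\beta^{-1}\circ f\circ\alpha^{-1}$ is easily checked to be a bijection from $\HOM_B^\rR(\prescript{\alpha^{-1}}{A}A_B^{\beta^{-1}},\{\lambda,\pi\}\prescript{}{B}B_B)$ onto $\HOM_B^\rR(A^\alpha_B,\{\lambda,\pi\}\prescript{}{B}B_B)$, bijectivity of $\rho$ follows. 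Granting this, $\tau_a=\rho(c)$ for a unique $c\in A$, so $\tr(cx)=\tau_a(x)$ for all $x\in A$; substituting this into the left $B$-linearity of $\tau_a$ and applying~\ref{R3} once more gives $c\beta(b)=(-1)^{\bar c\bar b}\beta(b)c$ for all $b\in B$, and since $\beta$ is an automorphism of $B$ this says precisely that $c\in C_A(B)$. Therefore $\sigma_c=\tr\circ\pl{c}=\tau_a$, whence $a=\tau^{-1}(\sigma_c)=\psi(c)\in\im\psi$.

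The one genuinely delicate step is the bijectivity of $\rho$: identifying $\HOM_B^\rR(A^\alpha_B,\{\lambda,\pi\}\prescript{}{B}B_B)$ with the $\HOM$-space appearing in the right-hand formulation~\ref{R2} of the Frobenius condition requires careful bookkeeping of the twists by $\alpha$ and $\beta$, which need not preserve $B$. Once that identification is in place, the remaining verifications are routine sign chases.
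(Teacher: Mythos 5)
Your proof is correct and takes essentially the same route as the paper's: the inclusion $\im\psi\subseteq C_A(\alpha(B))$ via the nondegeneracy computation (the paper's~\eqref{eq:centralizer}, which you rewrite in terms of $c$ rather than $y=\psi(c)$), and the reverse inclusion by inverting $\psi$ through the right-module version of the construction. Your bijection $\rho\colon c\mapsto\tr\circ\pl{c}$ is the paper's map $\tau'$ (equivalently $\alpha\circ\psi'\circ\alpha^{-1}$) in disguise, with surjectivity supplied by the isomorphism $\Psi$ and the conjugation $f\mapsto\beta^{-1}\circ f\circ\alpha^{-1}$ rather than by citing \ref{R4} for the right trace map $\tr^{\rR}=\beta\circ\tr\circ\alpha$ directly.
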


\begin{proof}
  Suppose $y \in \im \psi$ is homogeneous.  Then there is some $c\in C_A(B)$, with $\bar c = \bar y$, for which $\tr(cx) = (-1)^{\bar x \bar c} \tr(xy)$ for all $x \in A$.  But then, for any $b\in B$, we have
  \begin{equation} \label{eq:centralizer}
    \tr(x\alpha(b)y)
    = (-1)^{(\bar x + \bar b)\bar y} \tr(cx\alpha(b))
    = (-1)^{(\bar x + \bar b)\bar y} \tr(cx)b
    = (-1)^{\bar b \bar y} \tr(xy)b
    = (-1)^{\bar b \bar y} \tr(xy\alpha(b))
  \end{equation}
  for all $x\in A$, and it follows from \ref{L3} that $y\in C_A(\alpha(B))$.  Hence $\im \psi \subseteq C_A(\alpha(B))$.

  For the reverse inclusion, set $\tr^\rR=\beta\circ \tr\circ \alpha$, which is a right trace map by Lemma~\ref{lem:left-right-trace-map}.  Then, for any homogeneous $c\in C_A(B)$, the map $\sigma'_c := \tr^\rR \circ \pr{c} \colon A_B^{\beta^{-1}}\to \{\lambda + |c|, \pi + \bar c\} B_B$ is a homomorphism of right $B$-modules.  By \ref{L3}, we get a degree-preserving injection
  \[
    \sigma' \colon C_A(B) \hookrightarrow \HOM_{B}^\rR \left( A_B^{\beta^{-1}}, \{\lambda,\pi\} B_B \right), \quad c\mapsto \sigma'_c.
  \]
  Similarly, for any homogeneous $a\in A$, we have a homomorphism of right $B$-modules $\tau'_a := \tr^\rR \circ \pl{a} \colon A_B^{\beta^{-1}} \to \{\lambda + |a|, \pi + \bar a\} B_B$.  By \ref{R3} and \ref{R4}, the map
  \[
    \tau' \colon A \to \HOM_B^\rR \left( A_B^{\beta^{-1}}, \{\lambda,\pi\} B_B \right), \quad a\mapsto \tau'(a),
  \]
  is bijective.  The composition $\psi' := (\tau')^{-1} \circ \sigma' \colon C_A(B)\to A$ assigns to any $c \in C_A(B)$ the unique element $\psi'(c) \in A$ such that $\tr^\rR(xc) = (-1)^{\bar x \bar c} \tr^\rR(\psi'(c) x)$ for all $x \in A$.
  \details{
    For $c \in C_A(B)$ and $x \in A$, we have
    \[
      \tr^\rR(xc)
      = (-1)^{\bar x \bar c} \sigma'(c)(x)
      = (-1)^{\bar x \bar c} \tau'(\psi'(c))(x)
      = (-1)^{\bar x \bar c} \tr^\rR(\psi'(c) x).
    \]
  }
  Now let $y\in C_A(\alpha(B))=\alpha(C_A(B))$.  Then, since $\tr^\rR = \beta\circ \tr\circ \alpha$, we see that $\alpha \circ \psi' \circ \alpha^{-1}(y)$ is the unique element of $A$ such that $\tr(xy) = (-1)^{\bar x \bar y} \tr(\alpha \circ \psi' \circ \alpha^{-1}(y) x)$ for all $x \in A$.
  \details{
    We have
    \[
      \tr(xy)
      = \beta^{-1} \circ \tr^\rR (\alpha^{-1}(xy))
      = (-1)^{\bar x \bar y} \beta^{-1} \circ \tr^\rR( \psi'(\alpha^{-1}(y)) \alpha^{-1}(x)) \\
      = (-1)^{\bar x \bar y} \tr (\alpha \circ \psi' \circ \alpha^{-1}(y) x).
    \]
  }
  A computation similar to~\eqref{eq:centralizer} shows that $\alpha \circ \psi' \circ \alpha^{-1}(y) \in C_A(B)$, and hence $y=\psi \circ \alpha \circ \psi' \circ \alpha^{-1}(y)$.
  \details{
    Let $b\in B$, $x\in A$.  Set $a = \alpha \circ \psi' \circ \alpha^{-1}(y)$.  Note that this implies that $\bar y = \bar a$.  We have
    \[
      \tr(bax)
      = \beta^{-1}(b)\tr(ax)
      = (-1)^{\bar x \bar a} \beta^{-1}(b)\tr(xy)
      = (-1)^{\bar x \bar a} \tr(bxy)
      = (-1)^{\bar a \bar b} \tr(abx),
    \]
    hence $ba=(-1)^{\bar a \bar b} ab$ by \ref{R3}.
  }
  Thus $C_A(\alpha(B))\subseteq \im \psi$.
\end{proof}

\begin{prop}
  The map $\psi \colon C_A(B)\cong C_A(\alpha(B))$ is an isomorphism of rings.
\end{prop}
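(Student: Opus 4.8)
The plan is to leverage everything already established. By Lemma~\ref{lem:nakayama-image} the map $\psi$ restricts to a bijection $C_A(B) \to C_A(\alpha(B))$, and just before the statement it was noted that $\psi = \tau^{-1}\circ\sigma$ is grading-preserving and, being a composite of $\Z$-linear maps, additive. Hence the only remaining points are that $\psi(1_A) = 1_A$ and that $\psi$ is multiplicative; once these are in hand, $\psi$ is a bijective ring homomorphism, and its set-theoretic inverse is then automatically a ring homomorphism, so $\psi$ is an isomorphism of rings.

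For the unit: since $1_A \in C_A(B)$, the defining property~\eqref{eq:Nakayama-defining-property} gives $\tr(x) = \tr\big(x\,\psi(1_A)\big)$ for every $x \in A$, i.e.\ $\tr\big( x(1_A - \psi(1_A)) \big) = 0$ for all $x$, so $\psi(1_A) = 1_A$ by~\ref{L3}.

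For multiplicativity, fix homogeneous $c, c' \in C_A(B)$ and an arbitrary $y \in A$. Applying~\eqref{eq:Nakayama-defining-property} to $c$ with $x = c'y$, and then to $c'$ with $x = y\,\psi(c)$ --- using that $\overline{\psi(c)} = \bar c$ since $\psi$ preserves the grading --- one obtains
\begin{align*}
  \tr(cc'y)
  &= (-1)^{(\bar c' + \bar y)\bar c + (\bar y + \bar c)\bar c'}\, \tr\big( y\, \psi(c)\,\psi(c') \big) \\
  &= (-1)^{\bar y(\bar c + \bar c')}\, \tr\big( y\, \psi(c)\,\psi(c') \big),
\end{align*}
the sign exponent collapsing modulo $2$. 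Since $cc' \in C_A(B)$ with $\overline{cc'} = \bar c + \bar c'$, the defining property of $\psi(cc')$ reads $\tr(cc'y) = (-1)^{\bar y(\bar c + \bar c')}\,\tr\big( y\,\psi(cc') \big)$; comparing the two expressions gives $\tr\big( y\,(\psi(cc') - \psi(c)\psi(c')) \big) = 0$ for all $y \in A$, whence $\psi(cc') = \psi(c)\psi(c')$ by~\ref{L3}.

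There is no real obstacle here beyond bookkeeping: the one place to be careful is iterating~\eqref{eq:Nakayama-defining-property} and tracking the Koszul signs, which is why one records $\overline{\psi(c)} = \bar c$ explicitly before substituting. Everything else is formal, resting on Lemma~\ref{lem:nakayama-image} and property~\ref{L3}.
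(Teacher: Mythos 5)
Your proof is correct and follows essentially the same route as the paper's: reduce to showing $\psi$ is a unital, additive, grading-preserving, multiplicative map via Lemma~\ref{lem:nakayama-image}, then verify $\psi(1_A)=1_A$ and $\psi(cc')=\psi(c)\psi(c')$ by applying the defining property~\eqref{eq:Nakayama-defining-property} twice and invoking the nondegeneracy condition~\ref{L3}. The sign bookkeeping checks out and matches the paper's computation (which merely runs the same chain of equalities starting from $\tr(x\psi(c_1c_2))$ rather than from $\tr(cc'y)$).
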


\begin{proof}
  We use the notation of the proof of Lemma~\ref{lem:nakayama-image}.  Since $\psi$ is injective, it follows from Lemma~\ref{lem:nakayama-image} that we need only show that $\psi$ is a ring homomorphism.  First, we have $\tr(1_A x)=\tr(x1_A)$ for all $x \in A$ so that $\psi(1_A)=1_A$.  Next, note that $\psi = \tau^{-1} \circ \sigma$ is grading-preserving and linear since $\sigma$ and $\tau$ are.  Finally, for $c_1,c_2 \in C_A(B)$, we have
  \[
    \tr(x\psi(c_1c_2))
    = (-1)^{\bar x (\bar c_1 + \bar c_2)} \tr(c_1c_2x)
    = (-1)^{(\bar x + \bar c_1) c_2} \tr(c_2 x\psi(c_1))
    = \tr(x\psi(c_1)\psi(c_2)),
  \]
  from which it follows, by \ref{L3}, that $\psi(c_1c_2)=\psi(c_1)\psi(c_2)$.
\end{proof}

\begin{defin}[Nakayama isomorphism]
  We call the map $\psi \colon C_A(B) \to C_A(\alpha(B))$ characterized by~\eqref{eq:Nakayama-defining-property} the \emph{Nakayama isomorphism} associated to the trace map $\tr$.
\end{defin}

\begin{rem}
  In the case that $\alpha = \id_A$ and $\beta = \id_B$, the map $\psi$ defined above is the usual Nakayama automorphism of the Frobenius extension $A$ of $B$.
\end{rem}

\begin{prop}
  Suppose $\tr_1$ and $\tr_2$ are both trace maps for the twisted Frobenius extension $A$ of $B$, and let $\psi_1, \psi_2$ be the associated Nakayama isomorphisms.  Then there exists an invertible element $a \in C_A(\alpha(B))$ of degree $(0,0)$ such that $\psi_2(c) = a \psi_1(c) a^{-1}$ for all $c \in C_A(B)$.

  Conversely, let $\tr_1$ be a trace map for this twisted Frobenius extension, with Nakayama isomorphism $\psi_1$.  If $a$ is an invertible element of $C_A(\alpha(B))$ of degree $(0,0)$,  then the map $\psi_2 \colon C_A(B)\cong C_A(\alpha(B))$ defined by $\psi_2(c) = a \psi_1(c) a^{-1}$, $c \in C_A(B)$, is the Nakayama isomorphism associated to the trace map $\tr_2 = \tr_1 \circ \pr{a}$.
\end{prop}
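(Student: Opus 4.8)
The plan is to derive both directions from two ingredients already in hand: Proposition~\ref{prop:trace-map-uniqueness}, which says that any two trace maps for the extension differ by precomposition with $\pr{a}$ for a \emph{unique} invertible element $a \in C_A(\alpha(B))$ of degree $(0,0)$; and the characterization~\eqref{eq:Nakayama-defining-property} of the Nakayama isomorphism together with the uniqueness assertion built into it (which rests on~\ref{L3}). Since the $\Z_2$-degree of $a$ is $0$, the sign $(-1)^{\bar a \bar m}$ in $\pr{a}$ disappears, so that $\tr_1 \circ \pr{a}$ is simply the map $y \mapsto \tr_1(ya)$; this keeps the bookkeeping clean.

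For the forward direction, I would apply Proposition~\ref{prop:trace-map-uniqueness} to the two trace maps $\tr_1,\tr_2$ to obtain the invertible degree-$(0,0)$ element $a \in C_A(\alpha(B))$ with $\tr_2 = \tr_1 \circ \pr{a}$, i.e.\ $\tr_2(y) = \tr_1(ya)$ for all $y \in A$. I then check that $a\psi_1(c)a^{-1}$ satisfies the property characterizing $\psi_2(c)$ in~\eqref{eq:Nakayama-defining-property}, namely $\tr_2(cx) = (-1)^{\bar x \bar c}\,\tr_2\big(x\cdot a\psi_1(c)a^{-1}\big)$ for all $x \in A$. Unwinding both sides via $\tr_2(y) = \tr_1(ya)$ reduces this to $\tr_1(cxa) = (-1)^{\bar x \bar c}\,\tr_1\big(xa\,\psi_1(c)\big)$, which is exactly~\eqref{eq:Nakayama-defining-property} for $\psi_1$ applied to the element $xa$, whose $\Z_2$-degree equals $\bar x$ since $\bar a = 0$. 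By the uniqueness in~\eqref{eq:Nakayama-defining-property} (via~\ref{L3}), and noting that $a\psi_1(c)a^{-1} \in C_A(\alpha(B))$ because $C_A(\alpha(B))$ is a subring closed under inverses of its degree-$(0,0)$ units, we conclude $\psi_2(c) = a\psi_1(c)a^{-1}$.

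For the converse, given $\tr_1$, $\psi_1$, and an invertible degree-$(0,0)$ element $a \in C_A(\alpha(B))$, Proposition~\ref{prop:trace-map-uniqueness} already guarantees that $\tr_2 := \tr_1 \circ \pr{a}$ is a trace map; let $\psi_2$ be its Nakayama isomorphism. The computation of the previous paragraph applies verbatim and shows that $a\psi_1(c)a^{-1}$ satisfies the defining equation~\eqref{eq:Nakayama-defining-property} for $\psi_2$, whence $\psi_2(c) = a\psi_1(c)a^{-1}$ by uniqueness. (Alternatively, one invokes the forward direction: it produces an invertible degree-$(0,0)$ element $a'$ with $\tr_2 = \tr_1 \circ \pr{a'}$ and $\psi_2(c) = a'\psi_1(c)(a')^{-1}$, and $a' = a$ because the element in Proposition~\ref{prop:trace-map-uniqueness} is unique by~\ref{L3}.)

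There is essentially no serious obstacle here; the argument is a short sign-checking computation. The only points meriting a moment's care are that the degree-$(0,0)$ hypothesis on $a$ is what makes the signs in $\pr{a}$ vanish and the reindexing $y \mapsto xa$ degree-preserving in $\Z_2$, and that $a\psi_1(c)a^{-1}$ genuinely lands in $C_A(\alpha(B))$ — so that the identity $\psi_2(c) = a\psi_1(c)a^{-1}$ is a statement about the codomain of the Nakayama isomorphism, not merely about $A$.
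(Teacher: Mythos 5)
Your proposal is correct and follows essentially the same route as the paper: obtain $a$ from Proposition~\ref{prop:trace-map-uniqueness}, unwind $\tr_2(cx)=\tr_1(cxa)=(-1)^{\bar x\bar c}\tr_1(xa\psi_1(c))=(-1)^{\bar x\bar c}\tr_2(xa\psi_1(c)a^{-1})$, and conclude by the uniqueness in~\eqref{eq:Nakayama-defining-property}; the converse is handled the same way. Your extra remarks on why the signs vanish and why $a\psi_1(c)a^{-1}\in C_A(\alpha(B))$ are correct and only make explicit what the paper leaves implicit.
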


\begin{proof}
  Let $c\in C_A(B)$.  By Proposition~\ref{prop:trace-map-uniqueness}, there exists an invertible $a \in C_A(\alpha(B))$ of degree $(0,0)$ such that $\tr_2= \tr_1 \circ \pr{a}$.  This gives, for $x \in A$,
  \[
    \tr_2(cx)
    = \tr_1(cxa)
    = (-1)^{\bar x \bar c}\, \tr_1(xa\psi_1(c))
    = (-1)^{\bar x \bar c}\, \tr_2(x a \psi_1(c) a^{-1})
  \]
  and it follows that $\psi_2(c) = a \psi_1(c) a^{-1}$.  Conversely, defining $\psi_2$ as in the proposition will yield the Nakayama isomorphism of the map $\tr_2 := \tr_1 \circ \pr{a}$, which is guaranteed to be a trace map by Proposition~\ref{prop:trace-map-uniqueness}.
\end{proof}

\begin{prop}
  If we fix a trace map $\tr$ and choose dual sets of generators $\{x_1,\dots,x_n\}$ and $\{y_1,\dots,y_n\}$ as in Proposition~\ref{prop:dual-bases}, then the Nakayama isomorphism corresponding to $\tr$ is given explicitly by
  \[
    \psi(a) = \sum_{i=1}^n (-1)^{\bar a \bar x_i} y_i\alpha (\tr(ax_i))
  \]
  for all $a \in C_A(B)$, where we view $\tr$ as a degree $(-\lambda,\pi)$ map to $\prescript{}{B}B_B$ via the isomorphisms \eqref{eq:Lhom-shift-identification} and \eqref{eq:Rhom-shift-identification}.  Its inverse is given explicitly by
  \[
    \psi^{-1}(a)=\sum_{i=1}^n (-1)^{(\bar a + \pi) \bar x_i}\beta(\tr(y_i a))x_i
  \]
  for all $a \in C_A(\alpha(B))$.
\end{prop}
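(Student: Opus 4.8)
The plan is to check each of the two formulas directly against the defining property~\eqref{eq:Nakayama-defining-property} of the Nakayama isomorphism, using a pair of dual sets of generators together with the twisted $(B,B)$-bilinearity of $\tr$. Throughout I view $\tr$ as a degree $(-\lambda,\pi)$ map to $\prescript{}{B}B_B$, so that $\tr(\beta(b)z) = (-1)^{\pi\bar b}\,b\,\tr(z)$ and $\tr(z\alpha(b)) = \tr(z)\,b$ for all $b \in B$ and $z \in A$; recall also that $\psi(a)$ is characterized as the unique element of $A$ with $\tr(ax) = (-1)^{\bar x\bar a}\tr(x\psi(a))$ for all $x \in A$, the uniqueness coming from~\ref{L3}.

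For the first formula I would fix $a \in C_A(B)$, set $\psi'(a) := \sum_{i=1}^n(-1)^{\bar a\bar x_i}\,y_i\,\alpha(\tr(ax_i))$, and verify that $\tr(ax) = (-1)^{\bar x\bar a}\tr(x\psi'(a))$ for all $x \in A$; by the uniqueness above this forces $\psi(a) = \psi'(a)$. The right-hand side is immediate: since $\tr(ax_i) \in B$, the right-linearity of $\tr$ yields $\tr(x\psi'(a)) = \sum_i(-1)^{\bar a\bar x_i}\tr(xy_i)\,\tr(ax_i)$. For the left-hand side I would expand $x$ using the first equality in~\eqref{eq:dual-bases}, namely $x = (-1)^{\pi\bar x}\sum_j(-1)^{\pi\bar x_j}\beta(\tr(xy_j))\,x_j$, commute $a$ past the element $\beta(\tr(xy_j)) \in B$ using $a \in C_A(B)$, and then apply $\tr$ together with its left-linearity. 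Using the degree relation $\bar y_j = \pi + \bar x_j$ to evaluate $\overline{\tr(xy_j)} = \bar x + \bar x_j$, all the $\pi$-signs cancel in pairs and one is left with $\tr(ax) = (-1)^{\bar x\bar a}\sum_j(-1)^{\bar a\bar x_j}\tr(xy_j)\,\tr(ax_j)$, which is exactly the expression just computed for the right-hand side.

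For the second formula, since we already know that $\psi \colon C_A(B) \to C_A(\alpha(B))$ is a bijection, it suffices to identify $\psi^{-1}(a)$ for $a \in C_A(\alpha(B))$. I would set $c := \psi^{-1}(a) \in C_A(B)$, so that $\psi(c) = a$ and $\bar c = \bar a$; then~\eqref{eq:Nakayama-defining-property} gives $\tr(cx) = (-1)^{\bar x\bar a}\tr(xa)$ for all $x$, and in particular $\tr(cy_i) = (-1)^{\bar y_i\bar a}\tr(y_ia)$. Expanding $c$ itself by the first equality in~\eqref{eq:dual-bases}, substituting this expression for $\tr(cy_i)$, pulling the resulting scalar out of $\beta$ by linearity, and collapsing the $\pi$- and $\bar x_i$-signs using $\bar c = \bar a$ and $\bar y_i = \pi + \bar x_i$, one obtains $c = \sum_i(-1)^{(\bar a + \pi)\bar x_i}\beta(\tr(y_ia))\,x_i$, as desired. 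Note that this argument does not use the first formula, so the two halves of the proposition are logically independent.

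The only real difficulty I anticipate is the sign bookkeeping. Nothing here is conceptually subtle, but one must consistently invoke the correct twisted version of each linearity property of $\tr$ — left-linearity carrying a factor $(-1)^{\pi\bar b}$ and right-linearity carrying none, relative to the $\beta$-twist on the left and the $\alpha$-twist on the right — and carefully track the $\Z_2$-degrees of the $x_i$, the $y_i$, and the various values of $\tr$ at each step.
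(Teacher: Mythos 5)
Your argument is correct. For the second formula it coincides with the paper's proof: substitute $\psi^{-1}(a)$ into the first equality of~\eqref{eq:dual-bases}, rewrite $\tr(\psi^{-1}(a)y_i)$ via~\eqref{eq:Nakayama-defining-property}, and collapse the signs using $\bar y_i = \pi + \bar x_i$. For the first formula your route differs from the paper's: you take the claimed expression as a candidate and verify the defining property $\tr(ax) = (-1)^{\bar x\bar a}\tr(x\psi(a))$ for all $x$ by expanding $x$ with the left decomposition in~\eqref{eq:dual-bases} and commuting $a$ past $\beta(\tr(xy_j))$ using $a \in C_A(B)$, then invoke uniqueness from~\ref{L3}. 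The paper instead derives the formula in two lines by substituting $\alpha^{-1}(\psi(a))$ into the \emph{right} decomposition $a = \sum_i \alpha^{-1}(y_i)\tr(x_i\alpha(a))$, which immediately gives $\alpha^{-1}(\psi(a)) = \sum_i \alpha^{-1}(y_i)\tr(x_i\psi(a))$, and then converts $\tr(x_i\psi(a))$ to $(-1)^{\bar a\bar x_i}\tr(ax_i)$ by~\eqref{eq:Nakayama-defining-property} before applying $\alpha$. The paper's substitution trick is shorter and avoids the centrality computation and most of the sign bookkeeping; your verification is more computational but uses only the left half of~\eqref{eq:dual-bases}, and your sign accounting (in particular $\overline{\tr(xy_j)} = \bar x + \bar x_j$ and the twisted linearity $\tr(\beta(b)z) = (-1)^{\pi\bar b}b\,\tr(z)$, $\tr(z\alpha(b)) = \tr(z)b$) is accurate throughout.
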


\begin{proof}
  Suppose $a \in C_A(B)$.  Replacing $a$ by $\alpha^{-1}(\psi(a))$ in the last expression in~\eqref{eq:dual-bases} yields
  \begin{gather*}
    \alpha^{-1}(\psi(a))
    = \sum_{i=1}^n \alpha^{-1}(y_i) \tr(x_i\psi(a))
    = \sum_{i=1}^n (-1)^{\bar a \bar x_i} \alpha^{-1}(y_i) \tr(ax_i) \\
    \implies \psi(a) = \sum_{i=1}^n (-1)^{\bar a \bar x_i} y_i\alpha (\tr(ax_i)).
  \end{gather*}
  Now suppose $a \in C_A(\alpha(B))$.  Replacing $a$ by $\psi^{-1}(a)$ in the second expression in~\eqref{eq:dual-bases} yields
  \[
    \psi^{-1}(a)
    = (-1)^{\bar a \pi} \sum_{i=1}^n (-1)^{\pi \bar x_i} \beta(\tr(\psi^{-1}(a)y_i))x_i
    = \sum_{i=1}^n (-1)^{(\bar a + \pi) \bar x_i} \beta \big( \tr(y_i a) \big) x_i. \qedhere
  \]
\end{proof}

%
\section{Adjointness properties} \label{sec:adjoints}
%

We now examine the relationship between twisted Frobenius extensions and induction/restriction functors.  This is one of the main motivations for the concept of twisted Frobenius extensions. Throughout this section, we assume that $A$ and $B$ are rings, $\alpha$ is an automorphism of $A$, $\beta$ is an automorphism of $B$, $\lambda \in \Lambda$, and $\pi \in \Z_2$.

\begin{lem} \label{lem:casimir-map}
  Suppose that $A$ is an $(\alpha,\beta)$-Frobenius extension of $B$ of degree $(-\lambda,\pi)$ with dual sets of generators $\{x_i\}_{i=1}^n$, $\{y_i\}_{i=1}^n$ as in Proposition~\ref{prop:dual-bases}. Then the map
  \begin{equation}
    \eta \colon \prescript{}{A}{A}_A \to \prescript{}{A}A_B \otimes_B \{-\lambda,\pi\} \twBA,\quad a \mapsto \sum_{i=1}^n a\alpha^{-1}(y_i) \otimes \{-\lambda,\pi\} x_i,\quad a \in \prescript{}{A}A_A,
  \end{equation}
  is a homomorphism of $(A,A)$-bimodules.
\end{lem}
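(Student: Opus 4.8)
The plan is to check the three things that make $\eta$ an $(A,A)$-bimodule homomorphism in the sense of this paper: that it is degree-preserving, that it is left $A$-linear, and that it is right $A$-linear. The first is a quick degree count: the summand $a\alpha^{-1}(y_i)\otimes\{-\lambda,\pi\}x_i$ sits in degree $(|a|+|y_i|,\bar a+\bar y_i)$ on the $A$-factor and is shifted by $(-\lambda,\pi)$ on the $\twBA$-factor, so the relation $(|y_i|,\bar y_i)=(\lambda-|x_i|,\pi+\bar x_i)$ of Proposition~\ref{prop:dual-bases} makes everything collapse back to degree $(|a|,\bar a)$. Left $A$-linearity is immediate, since the left $A$-action on $\prescript{}{A}A_B\otimes_B\{-\lambda,\pi\}\twBA$ is left multiplication on the first tensor factor, whence $\eta(a'a)=\sum_i a'(a\alpha^{-1}(y_i))\otimes\{-\lambda,\pi\}x_i=a'\cdot\eta(a)$.

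The substance is right $A$-linearity. First I would unwind the relevant structures. The right $A$-action on the target is the right action on $\{-\lambda,\pi\}\twBA$, i.e.\ the $\alpha$-twisted right action on $A$ carried through the shift, so $(\{-\lambda,\pi\}x_i)\cdot a'=\{-\lambda,\pi\}(x_i\alpha(a'))$; and the defining relation of $\otimes_B$, combined with the sign in the left shift action on $\twBA$, reads $mb\otimes\{-\lambda,\pi\}x=(-1)^{\pi\bar b}\,m\otimes\{-\lambda,\pi\}(\beta(b)x)$ for $b\in B$. Thus $\eta(a)\cdot a'=\sum_i a\alpha^{-1}(y_i)\otimes\{-\lambda,\pi\}(x_i\alpha(a'))$, and I would rewrite $x_i\alpha(a')$ by means of the first equality in~\eqref{eq:dual-bases}, $x_i\alpha(a')=(-1)^{\pi(\bar x_i+\bar{a'})}\sum_j(-1)^{\pi\bar x_j}\beta\big(\tr(x_i\alpha(a')y_j)\big)x_j$. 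Substituting this in and then pulling each $B$-coefficient $\beta(\tr(x_i\alpha(a')y_j))$ to the left across the tensor via the relation above introduces a factor $(-1)^{\pi\overline{\tr(x_i\alpha(a')y_j)}}$; since $\overline{\tr(x_i\alpha(a')y_j)}=\bar x_i+\bar{a'}+\bar x_j$ (using $\bar y_j=\pi+\bar x_j$ and the degree $(-\lambda,\pi)$ of $\tr$), all the $(-1)^{\pi}$-contributions cancel in pairs and leave $\eta(a)\cdot a'=\sum_{i,j}a\alpha^{-1}(y_i)\,\tr(x_i\alpha(a')y_j)\otimes\{-\lambda,\pi\}x_j$. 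For fixed $j$ the inner sum over $i$ is, by the second equality in~\eqref{eq:dual-bases} applied to $c=a'\alpha^{-1}(y_j)$ (so that $\alpha(c)=\alpha(a')y_j$), exactly $a'\alpha^{-1}(y_j)$; hence $\eta(a)\cdot a'=\sum_j(aa')\alpha^{-1}(y_j)\otimes\{-\lambda,\pi\}x_j=\eta(aa')$, as required.

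The only real obstacle is the sign bookkeeping in the penultimate step, tracking the $(-1)^{\pi}$ factors coming from the shift functor against those built into the $\pi$-twisted dual-basis identities of~\eqref{eq:dual-bases}; these are engineered to cancel, so no genuine difficulty arises. Morally, $\sum_i\alpha^{-1}(y_i)\otimes\{-\lambda,\pi\}x_i$ is a Casimir element for the extension and $\eta$ is ``left multiplication by the Casimir''.
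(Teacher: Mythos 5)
Your proposal is correct and is essentially the paper's own argument: both proofs establish right $A$-linearity by expanding with one identity of~\eqref{eq:dual-bases}, passing the resulting $B$-coefficients across the tensor product (with the $(-1)^{\pi\bar b}$ sign from the shifted left action cancelling against the signs built into~\eqref{eq:dual-bases}), and recombining with the other identity. The only cosmetic difference is direction — the paper computes $a'\cdot\eta(1_A)=\eta(1_A)\cdot a'$ starting from the left action, while you start from $\eta(a)\cdot a'$ and work back to $\eta(aa')$ — and your sign bookkeeping checks out.
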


\begin{proof}
  It suffices to show that
  \[
    a' \cdot \left(\sum_{i=1}^n \alpha^{-1}(y_i) \otimes \{-\lambda,\pi\} x_i \right) = \left(\sum_{i=1}^n \alpha^{-1}(y_i) \otimes \{-\lambda,\pi\} x_i \right) \cdot a'\quad \text{for all } a' \in A.
  \]
  We have
  \begin{multline*}
    a' \cdot \left(\sum_{i=1}^n \alpha^{-1}(y_i) \otimes \{-\lambda,\pi\} x_i \right)
    = \sum_{i=1}^n a' \alpha^{-1}(y_i) \otimes \{-\lambda,\pi\} x_i \\
    = \sum_{i=1}^n \left( \sum_{j=1}^n \alpha^{-1}(y_j) \tr \big( x_j \alpha(a') y_i \big) \right) \otimes \{-\lambda,\pi\} x_i \\
    = \sum_{i,j=1}^n (-1)^{\pi (\bar a' + \bar x_i + \bar x_j)} \alpha^{-1}(y_j) \otimes \{-\lambda,\pi\} \Big( \beta \big( \tr(x_j \alpha(a') y_i) \big) x_i \Big) \\
    = \sum_{j=1}^n \alpha^{-1}(y_j) \otimes\{-\lambda,\pi\} \left( (-1)^{\pi (\bar x_j + \bar a')} \sum_{i=1}^n (-1)^{\pi \bar x_i} \beta \big( \tr(x_j \alpha(a') y_i) \big) x_i \right) \\
    = \sum_{j=1}^n \alpha^{-1}(y_j) \otimes \{-\lambda,\pi\} x_j \alpha(a')
    = \left( \sum_{j=1}^n \alpha^{-1}(y_j) \otimes \{-\lambda,\pi\} x_j \right) \cdot a',
  \end{multline*}
  where, in the second equality, we used the last expression in~\eqref{eq:dual-bases} with $a = a' \alpha^{-1}(y_i)$ and, in the fifth equality, we used the second expression in~\eqref{eq:dual-bases} with $a=x_j \alpha(a')$.
  \details{
    It now follows that $\eta$ is a homomorphism of $(A,A)$-bimodules.  The fact that it commutes with the left $A$-action follows immediately from the definition.  What we have shown above is that $a' \cdot \eta(1_A) = \eta(1_A) \cdot a'$ for all $a' \in A$.  Now suppose $a \in \prescript{}{A}A_A$ and $a' \in A$.  Then
    \begin{gather*}
      \eta(a) \cdot a' = (a \cdot \eta(1_A)) \cdot a' = (\eta(1_A) \cdot a) \cdot a' = \eta(1_A) \cdot (aa') = (aa') \cdot \eta(1_A) = \eta(aa'),
    \end{gather*}
    and so $\eta$ also commutes with the right $A$-action.
  }
\end{proof}

Define
\begin{equation}
  \varepsilon \colon \{-\lambda,\pi\} \twBA \otimes_A \prescript{}{A}{A}_B \to \prescript{}{B}{B}_B,\quad \{-\lambda,\pi\} a_1 \otimes a_2 \mapsto \tr(a_1 \alpha(a_2)),
\end{equation}
where we view $\tr$ as a degree $(-\lambda,\pi)$ map to $\prescript{}{B}B_B$ via \eqref{eq:Lhom-shift-identification} and \eqref{eq:Rhom-shift-identification}.  Then it is straightforward to verify that $\varepsilon$ is a homomorphism of $(B,B)$-bimodules.
\details{
  For $a_1 \in \twBA$, $a_2 \in \prescript{}{A}A_B$ and $b \in B$, we have
  \begin{multline*}
    \varepsilon(b \cdot (\{-\lambda,\pi\}a_1 \otimes a_2)) = \varepsilon \left( (-1)^{\pi \bar b} \{-\lambda,\pi\} (\beta(b)a_1) \otimes a_2 \right) = (-1)^{\pi \bar b} \tr \big( \beta(b)a_1 \alpha(a_2) \big) \\
    = b \tr(a_1\alpha(a_2))
    = b \varepsilon(\{-\lambda,\pi\}a_1 \otimes a_2)
  \end{multline*}
  and
  \[
    \varepsilon ((\{-\lambda,\pi\}a_1 \otimes a_2) \cdot b)
    = \varepsilon (\{-\lambda,\pi\} a_1 \otimes a_2b)
    = \tr(a_1 \alpha(a_2) \alpha(b))
    = \tr(a_1 \alpha(a_2)) b
    = \varepsilon(\{-\lambda,\pi\} a_1 \otimes a_2) b.
  \]
}

Recall that, for a ring $R$, we let $R\md$ denote the category of left $R$-modules.  If $R$ and $S$ are rings and $M$ is an $(R,S)$-bimodule, then we have a functor
\begin{gather*}
  M \otimes_S - \colon S\md \to R\md.
\end{gather*}
In particular, we have the functors
\begin{gather*}
  \prescript{}{A}A_B \otimes_B - \colon B\md \to A\md\quad \text{and} \\
  \prescript{}{B}A_A \otimes_A - \colon A\md \to B\md
\end{gather*}
of \emph{induction} and \emph{restriction}, respectively.  Recall that induction is left adjoint to restriction.  The following theorem can be interpreted as saying that $A$ is a twisted Frobenius extension of $B$ if and only if induction is also \emph{twisted shifted} right adjoint to restriction. In the case that all gradings are trivial, this adjunction was studied by Morita (see \cite[Th.~4.1]{Mor65}).

\begin{theo} \label{theo:twisted-adjointness}
 The functor $\prescript{}{A}{A}_B \otimes_B -$ is right adjoint to the functor $\{-\lambda,\pi\} \twBA \otimes_A -$ if and only if $A$ is an $(\alpha,\beta)$-Frobenius extension of $B$ of degree $(-\lambda,\pi)$.
\end{theo}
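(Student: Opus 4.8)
The plan is to establish the adjunction through its unit and counit. Write $\Ind := \prescript{}{A}A_B \otimes_B -$ and $F := \{-\lambda,\pi\} \twBA \otimes_A -$. Then $\Ind$ is right adjoint to $F$ exactly when there are natural transformations $\eta \colon \id_{A\md} \to \Ind \circ F$ and $\varepsilon \colon F \circ \Ind \to \id_{B\md}$ satisfying the triangle identities $(\varepsilon F) \circ (F \eta) = \id_F$ and $(\Ind\, \varepsilon) \circ (\eta\, \Ind) = \id_{\Ind}$. Now $\Ind \circ F$ and $F \circ \Ind$ are the tensor-product functors $\bigl(\prescript{}{A}A_B \otimes_B \{-\lambda,\pi\}\twBA\bigr) \otimes_A -$ and $\bigl(\{-\lambda,\pi\}\twBA \otimes_A \prescript{}{A}A_B\bigr) \otimes_B -$, so natural transformations to or from the identity functor correspond, by a standard Eilenberg--Watts-type argument carried out in the graded super setting, to homogeneous degree $(0,0)$ bimodule maps
\[
  \tilde\eta \colon \prescript{}{A}{A}_A \to \prescript{}{A}A_B \otimes_B \{-\lambda,\pi\}\twBA, \qquad \tilde\varepsilon \colon \{-\lambda,\pi\}\twBA \otimes_A \prescript{}{A}A_B \to \prescript{}{B}B_B,
\]
and the triangle identities become the statements that the two induced bimodule endomorphisms, of $\{-\lambda,\pi\}\twBA$ and of $\prescript{}{A}A_B$ respectively, are the identity. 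The proof thus reduces to matching such pairs $(\tilde\eta, \tilde\varepsilon)$ with the data appearing in Proposition~\ref{prop:dual-bases}.

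For the direction "$(\alpha,\beta)$-Frobenius extension $\Rightarrow$ adjunction", I would fix a trace map $\tr$ together with dual sets of generators $\{x_i\}_{i=1}^n$, $\{y_i\}_{i=1}^n$ as in Proposition~\ref{prop:dual-bases}, and take $\tilde\eta$ to be the map $\eta$ of Lemma~\ref{lem:casimir-map} and $\tilde\varepsilon$ to be the map $\varepsilon$ displayed just before the theorem. These are already known to be bimodule homomorphisms of exactly the required form, so they give the sought-after natural transformations, and only the two triangle identities remain to be checked. Evaluating $(\varepsilon F) \circ (F \eta)$ on $\prescript{}{A}A_A$ and unwinding the tensor products, I expect it to equal the identity precisely when $a = (-1)^{\pi \bar a} \sum_i (-1)^{\pi \bar x_i} \beta(\tr(a y_i)) x_i$ for all $a \in A$, and $(\Ind\, \varepsilon) \circ (\eta\, \Ind)$ evaluated on $\prescript{}{B}B_B$ to equal the identity precisely when $a = \sum_i \alpha^{-1}(y_i) \tr(x_i \alpha(a))$ for all $a \in A$; these are exactly the two equalities of~\eqref{eq:dual-bases}, which hold by assumption. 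Hence $\Ind$ is right adjoint to $F$.

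For the converse, I would begin with an arbitrary adjunction making $\Ind$ right adjoint to $F$, pass to the associated bimodule maps $\tilde\eta$ and $\tilde\varepsilon$, write $\tilde\eta(1_A) = \sum_{i=1}^n \alpha^{-1}(y_i) \otimes \{-\lambda,\pi\} x_i$ with the $x_i, y_i$ homogeneous, and define $\tr \colon A \to B$ by $\tr(a) := \tilde\varepsilon(\{-\lambda,\pi\} a \otimes 1_A)$. Since $\tilde\varepsilon$ is a $(B,B)$-bimodule map of the stated twisted, degree-shifted type, $\tr$ is a $(B,B)$-bimodule homomorphism $\prescript{\beta}{B}A^\alpha_B \to \{\lambda,\pi\}\prescript{}{B}B_B$, and since $\tilde\eta$ is homogeneous of degree $(0,0)$ the generators satisfy $(|y_i|, \bar y_i) = (\lambda - |x_i|, \pi + \bar x_i)$. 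Finally, the two triangle identities, unwound exactly as in the forward direction, become the two equalities of~\eqref{eq:dual-bases}, so that $\tr$, $\{x_i\}$, $\{y_i\}$ form dual sets of generators in the sense of Proposition~\ref{prop:dual-bases}; that proposition then yields that $A$ is an $(\alpha,\beta)$-Frobenius extension of $B$ of degree $(-\lambda,\pi)$.

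I expect the main obstacle to be the passage between the categorical and the bimodule pictures, together with the accompanying sign bookkeeping: one must verify, in the sign-sensitive graded super setting, that natural transformations of these tensor functors are represented by bimodule maps of precisely the twisted, degree-shifted form appearing in Lemma~\ref{lem:casimir-map} and in the definition of $\varepsilon$, and then check that the triangle identities unwind, sign for sign and shift for shift, into the two defining relations of~\eqref{eq:dual-bases}. The forward implication is essentially this sign-tracking computation (using that $\eta$ and $\varepsilon$ are already known to be bimodule maps), while the converse additionally requires the Eilenberg--Watts-type representability statement for natural transformations.
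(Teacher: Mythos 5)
Your proposal is correct and follows essentially the same route as the paper: exhibit $\eta$ and $\varepsilon$ as unit and counit, reduce the two triangle identities to the two equalities of~\eqref{eq:dual-bases}, and in the converse direction extract $\tr$ and the dual generators from $\varepsilon$ and $\eta(1_A)$ and invoke Proposition~\ref{prop:dual-bases}. The paper simply asserts the correspondence between the adjunction data and bimodule maps of the stated form (the step you flag as Eilenberg--Watts-type), so your treatment is, if anything, slightly more explicit about where that input is needed.
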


\begin{proof}
  Let $A$ be an $(\alpha,\beta)$-Frobenius extension of $B$ of degree $(-\lambda,\pi)$, and let $\eta$ and $\varepsilon$ be the maps defined above.  We claim that $\eta$ and $\varepsilon$ are the unit and counit of an adjunction, making $\prescript{}{A}{A}_B \otimes_B -$ right adjoint to $\{-\lambda,\pi\} \twBA \otimes_A -$.  First, we need to show that the composition
  \begin{multline*}
    \{-\lambda,\pi\} \twBA \cong \{-\lambda,\pi\} \twBA \otimes_A \prescript{}{A}{A}_A
    \xrightarrow{\id \otimes \eta} \{-\lambda,\pi\} \twBA \otimes_A \prescript{}{A}{A}_B \otimes_B \{-\lambda,\pi\} \twBA \\
    \xrightarrow{\varepsilon \otimes \id} \prescript{}{B}{B}_B \otimes_B \{-\lambda,\pi\} \twBA \cong \{-\lambda,\pi\} \twBA
  \end{multline*}
  is the identity map, where the inverse of the first isomorphism is given by the right $A$-action and the last isomorphism is given by the left $B$-action.  For $a \in \twBA$, we have
  \begin{multline*}
    \{-\lambda,\pi\} a \mapsto \{-\lambda,\pi\} a \otimes 1_A
    \mapsto \sum_{i=1}^n \{-\lambda,\pi\} a \otimes \alpha^{-1}(y_i) \otimes \{-\lambda,\pi\} x_i \\
    \mapsto \sum_{i=1}^n \tr(a y_i) \otimes \{-\lambda,\pi\} x_i
    \mapsto \{-\lambda,\pi\} (-1)^{\pi \bar a} \sum_{i=1}^n (-1)^{\pi \bar x_i}  \beta( \tr (ay_i)) x_i
    = \{-\lambda,\pi\} a,
  \end{multline*}
  as desired, where the last equality follows from \eqref{eq:dual-bases}.

  Finally, we need to show that the composition
  \[
    \prescript{}{A}{A}_B \cong \prescript{}{A}{A}_A \otimes_A \prescript{}{A}{A}_B
    \xrightarrow{\eta \otimes \id} \prescript{}{A}{A}_B \otimes_B \{-\lambda,\pi\} \twBA \otimes_A \prescript{}{A}{A}_B \xrightarrow{\id \otimes \varepsilon} \prescript{}{A}{A}_B \otimes_B \prescript{}{B}{B}_B \cong \prescript{}{A}{A}_B
  \]
  is also the identity map, where the first and last isomorphisms again come from the corresponding actions.  We have
  \begin{multline*}
    a \mapsto 1_A \otimes a
    \mapsto \sum_{i=1}^n \alpha^{-1}(y_i) \otimes \{-\lambda,\pi\} x_i \otimes a \\
    \mapsto \sum_{i=1}^n \alpha^{-1}(y_i) \otimes \tr(x_i\alpha(a))
    \mapsto \sum_{i=1}^n \alpha^{-1}(y_i) \tr(x_i \alpha(a))
    = a,
  \end{multline*}
  where the last equality again follows from \eqref{eq:dual-bases}.

  Now suppose that the functor $\prescript{}{A}{A}_B \otimes_B -$ is right adjoint to the functor $\{-\lambda,\pi\} \twBA \otimes_A -$.  Then there exist bimodule homomorphisms
  \[
    \eta \colon \prescript{}{A}{A}_A \to \prescript{}{A}A_B \otimes_B \{-\lambda,\pi\} \twBA
  \]
  and
  \[
    \varepsilon \colon \{-\lambda,\pi\} \twBA \otimes_A \prescript{}{A}{A}_B \to \prescript{}{B}{B}_B
  \]
  satisfying the counit-unit equations.  Define a map
  \[
    \tr \colon \prescript{\beta}{B} A_B^{\alpha} \to \{\lambda,\pi\} \prescript{}{B}B_B, \quad a \mapsto \{\lambda,\pi\}\varepsilon(\{-\lambda,\pi\} a \otimes 1_A).
  \]
  Then $\tr$ is a homomorphism of bimodules since $\varepsilon$ is.  Write $\eta(1_A)=\sum_{i=1}^n y_i' \otimes \{-\lambda,\pi\} x_i$ for $y_i' \in \prescript{}{A}A_B$, $x_i \in \twBA$, $i=1,\dotsc,n$.  We claim that the sets
  \[
    \{x_i \ |\ i=1,\dotsc,n\} \quad \text{and} \quad \{y_i := \alpha(y_i')\ |\ i=1,\dotsc,n\}
  \]
  form dual sets of generators as in Proposition~\ref{prop:dual-bases}.  Indeed, first note that, since $\eta$ is a degree-preserving map, we have $\bar x_i = \bar x_i' = \bar y_i + \pi = \bar y_i' + \pi$.  Then consider the composition
  \begin{multline*}
    \{-\lambda,\pi\} \twBA \cong \{-\lambda,\pi\} \twBA \otimes_A \prescript{}{A}{A}_A
    \xrightarrow{\id \otimes \eta} \{-\lambda,\pi\} \twBA \otimes_A \prescript{}{A}{A}_B \otimes_B \{-\lambda,\pi\} \twBA \\
    \xrightarrow{\varepsilon \otimes \id} \prescript{}{B}{B}_B \otimes_B \{-\lambda,\pi\} \twBA \cong \{-\lambda,\pi\} \twBA,
  \end{multline*}
  which is the identity map.  For $a \in A$, we have
  \begin{multline*}
    \{-\lambda,\pi\} a \mapsto \{-\lambda,\pi\} a \otimes 1_A
    \mapsto \sum_{i=1}^n \{-\lambda,\pi\} a \otimes y_i' \otimes \{-\lambda,\pi\} x_i \\
    = \sum_{i=1}^n \{-\lambda,\pi\} a y_i \otimes 1_A\otimes \{-\lambda,\pi\} x_i
    \mapsto \sum_{i=1}^n \tr(a y_i) \otimes \{-\lambda,\pi\} x_i \\
    \mapsto \sum_{i=1}^n (-1)^{\pi(\bar a + \bar x_i)} \{-\lambda,\pi\} \beta( \tr (a y_i)) x_i,
  \end{multline*}
  and hence $a = (-1)^{\pi \bar a} \sum_{i=1}^n (-1)^{\pi \bar x_i} \beta( \tr (a y_i)) x_i$.

  On the other hand, consider the composition
  \[
    \prescript{}{A}{A}_B \cong \prescript{}{A}{A}_A \otimes_A \prescript{}{A}{A}_B
    \xrightarrow{\eta \otimes \id} \prescript{}{A}{A}_B \otimes_B \{-\lambda,\pi\} \twBA \otimes_A \prescript{}{A}{A}_B \xrightarrow{\id \otimes \varepsilon} \prescript{}{A}{A}_B \otimes_B \prescript{}{B}{B}_B \cong \prescript{}{A}{A}_B,
  \]
  which is also the identity map.  We have
  \begin{multline*}
    a \mapsto 1_A \otimes a
    \mapsto \sum_{i=1}^n y_i' \otimes \{-\lambda,\pi\} x_i \otimes a
    =\sum_{i=1}^n y_i' \otimes \{-\lambda,\pi\} x_i \alpha(a) \otimes 1_A \\
    \mapsto \sum_{i=1}^n y_i' \otimes \tr(x_i \alpha(a))
    \mapsto \sum_{i=1}^n y_i' \tr(x_i \alpha(a))
    = \sum_{i=1}^n \alpha^{-1}(y_i) \tr(x_i \alpha(a)),
  \end{multline*}
  and so $a = \sum_{i=1}^n \alpha^{-1}(y_i) \tr(x_i \alpha(a))$.  The theorem then follows from Proposition~\ref{prop:dual-bases}.
\end{proof}

%
\section{Examples} \label{sec:examples}
%

Suppose that $\F$ is a field.  If $A_1$ and $A_2$ are graded superalgebras over $\F$, and $M$ is an $(A_1,A_2)$-bimodule, then the dual
\[
  M^\vee := \HOM_\F(M,\F)
\]
is an $(A_2,A_1)$-bimodule with action given, for $a_1 \in A_1$, $a_2 \in A_2$, $f \in M^\vee$, by
\begin{equation} \label{eq:BA-hom-bimod}
  a_2 \cdot f \cdot a_1 = (-1)^{\bar a_2 \bar f} f \circ \pr{a_2} \circ \pl{a_1}=(-1)^{\bar a_2 \bar f + \bar a_2 \bar a_1}f \circ \pl{a_1} \circ \pr{a_2} .
\end{equation}

Recall that a finite-dimensional graded superalgebra $A$ is a Frobenius graded superalgebra of degree $(-\lambda_A,\pi_A)$ if there exists a grading-preserving linear map $\tr_A \colon A \to \{\lambda_A,\pi_A\}\F$, called the trace map of $A$, such that the kernel of $\tr_A$ contains no nonzero left ideals of $A$.  Throughout this section, we will view such a trace map as a degree $(-\lambda_A, \pi_A)$ map to $\F$ via \eqref{eq:Lhom-shift-identification} and \eqref{eq:Rhom-shift-identification}.  The corresponding Nakayama automorphism is the algebra automorphism $\psi_A$ of $A$ satisfying $\tr_A(a_1 a_2) = (-1)^{\bar a_1 \bar a_2} \tr_A(a_2 \psi_A(a_1))$ for all $a_1,a_2$.  In the language of the current paper, a Frobenius graded superalgebra $A$ is an $(\id_A,\id_\F)$-Frobenius extension of $\F$.

Suppose that $A$ is a Frobenius graded superalgebra of degree $(-\lambda_A,\pi_A)$ with trace map $\tr_A$ and corresponding Nakayama automorphism $\psi_A$.  Furthermore, suppose that $B$ is a graded subalgebra of $A$ that is itself a Frobenius graded superalgebra of degree $(-\lambda_B,\pi_B)$, with trace map $\tr_B$ and corresponding Nakayama automorphism $\psi_B$.

Fix an $\F$-basis $\cB$ of $B$ and let $\cB^\vee = \{b^\vee\ |\ b \in \cB\}$ be the right dual basis defined by
\[
  \langle b_1,b_2^\vee \rangle := \tr_B(b_1b_2^\vee) = \delta_{b_1,b_2},\quad b_1,b_2 \in \cB.
\]
It follows that
\begin{equation} \label{eq:basis-decomp}
  \sum_{b \in \cB} \langle b', b^\vee \rangle b = b' = \sum_{b \in \cB} \langle b, b' \rangle b^\vee \quad \text{for all } b' \in B.
\end{equation}
\details{
  Let $b' \in B$.  Then, for all $b_1 \in \cB$, we have
  \[
    \left\langle \sum_{b \in \cB} \langle b', b^\vee \rangle b, b_1^\vee \right\rangle = \langle b', b_1^\vee \rangle\quad \text{and} \quad \left\langle b_1, \sum_{b \in \cB} \langle b,b' \rangle b^\vee \right\rangle = \langle b_1,b' \rangle,
  \]
  and the result follows from the nondegeneracy of $\langle\ ,\ \rangle$.
}
\details{
  Let $\cB^* = \{b^*\ |\ b \in \cB\}$ be the left dual basis defined by
  \[
    \langle b_1^*,b_2 \rangle := \tr_B(b_1^* b_2) = \delta_{b_1,b_2},\quad b_1,b_2 \in \cB.
  \]
  It follows that $b^* = (-1)^{\bar b(\bar b + \sigma)} \psi_B^{-1}(b^\vee)$ for $b \in \cB$.
  For $b_1, b_2 \in \cB$, we have
  \[
    (-1)^{(\bar b_1 + \sigma) \bar b_2} \tr_B(\psi_B^{-1}(b_1^\vee) b_2) = \tr_B(b_2 b_1^\vee)  = \delta_{b_1, b_2}.
  \]
}

\begin{lem}
  For any $(B,A)$-bimodule $\prescript{}{B}{M}_A$, the maps
  \begin{gather}
    \HOM^\rL_B \left( \prescript{\psi_B}{B}{M}_A, \prescript{}{B}{B}_B \right) \to \{\lambda_B, \pi_B\} (\prescript{}{B}{M}_A)^\vee,\quad f \mapsto \tr_B \circ f, \label{eq:dual-isom} \\
    \{\lambda_B, \pi_B\} (\prescript{}{B}{M}_A)^\vee \to \HOM^\rL_B \left( \prescript{\psi_B}{B}{M}_A, \prescript{}{B}{B}_B \right),\quad \theta \mapsto \left( \ m \mapsto \sum_{b \in \cB} \theta(b \cdot m) b^\vee \ \right), \label{eq:dual-isom-inverse}
  \end{gather}
  are mutually inverse isomorphisms of $(A,B)$-bimodules.
\end{lem}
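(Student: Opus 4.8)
The plan is a direct verification in three steps: that both maps are well defined (i.e.\ land in the asserted graded bimodules), that each intertwines the $(A,B)$-bimodule structures, and that the two composites are the respective identity maps. The essential ingredients are the two expansions in~\eqref{eq:basis-decomp} together with the defining relation $\tr_B(b_1b_2)=(-1)^{\bar b_1\bar b_2}\tr_B(b_2\psi_B(b_1))$ of the Nakayama automorphism of $B$. For well-definedness: since $\tr_B$ is a degree $(-\lambda_B,\pi_B)$ map to $\F$, post-composition with $\tr_B$ shifts $\Lambda\times\Z_2$-degree by $(-\lambda_B,\pi_B)$, which is exactly compensated by the shift in $\{\lambda_B,\pi_B\}(\prescript{}{B}{M}_A)^\vee$, and $\tr_B\circ f$ is automatically $\F$-linear, so~\eqref{eq:dual-isom} has the claimed target. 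For~\eqref{eq:dual-isom-inverse}, one first notes that $\tr_B(b_1b_2^\vee)=\delta_{b_1,b_2}$ forces $(|b^\vee|,\overline{b^\vee})=(\lambda_B-|b|,\pi_B+\bar b)$, and then a degree count shows $m\mapsto\sum_b\theta(b\cdot m)b^\vee$ has the correct degree; that it is a homomorphism of left $B$-modules out of $\prescript{\psi_B}{B}{M}$ comes from expanding $bb'$ in the basis $\cB$ via the first identity of~\eqref{eq:basis-decomp} and resumming over $b$ (this is what forces the left twist by $\psi_B$ on the source).

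For the bimodule intertwining: for~\eqref{eq:dual-isom}, left $A$-equivariance is formal once one notices that the sign $(-1)^{\pi_B\bar a}$ produced by the left shift $\{\lambda_B,\pi_B\}$ cancels the sign coming from the parity of $\tr_B\circ f$ viewed inside the shifted module, so that $\tr_B\circ(a\cdot f)$ and $a\cdot(\tr_B\circ f)$ both reduce to $(-1)^{\bar a\bar f}(\tr_B\circ f)\circ\pr a$; right $B$-equivariance is the substantive point and is precisely where the Nakayama relation is used, via the identity $\tr_B(\psi_B^{-1}(b)f(m))=(-1)^{\bar b\,\overline{f(m)}}\tr_B(f(m)b)$ (apply the Nakayama relation with $b_1=\psi_B^{-1}(b)$). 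The analogous checks for~\eqref{eq:dual-isom-inverse} are similar; alternatively they follow from the next step, since a two-sided inverse of a bimodule homomorphism is again one.

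For mutual inverseness: composing~\eqref{eq:dual-isom} then~\eqref{eq:dual-isom-inverse} sends $f$ to $m\mapsto\sum_b\tr_B\!\big(f(b\cdot m)\big)b^\vee=\sum_b\tr_B\!\big(b\,f(m)\big)b^\vee=f(m)$, using first that $f$ is a homomorphism out of $\prescript{\psi_B}{B}{M}$ and then the second identity of~\eqref{eq:basis-decomp} with $b'=f(m)$; composing~\eqref{eq:dual-isom-inverse} then~\eqref{eq:dual-isom} sends $\theta$ to $m\mapsto\tr_B\!\big(\sum_b\theta(b\cdot m)b^\vee\big)=\sum_b\theta(b\cdot m)\tr_B(b^\vee)=\theta(m)$, using that $\sum_b\tr_B(b^\vee)b=1_B$ (the first identity of~\eqref{eq:basis-decomp} at $b'=1_B$), that $\psi_B$ is an algebra automorphism so $\sum_b\tr_B(b^\vee)\psi_B(b)=\psi_B(1_B)=1_B$, and the $\F$-linearity of $\theta$.

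The main obstacle is bookkeeping rather than anything conceptual: one must juggle three separate sources of signs — the Koszul signs in the $(A,B)$-bimodule structure on $M^\vee$ from~\eqref{eq:BA-hom-bimod}, the signs produced by the left shift $\{\lambda_B,\pi_B\}$, and the signs $(-1)^{\gamma\bar b}$ attached via~\eqref{bighom} to a homomorphism of parity $\gamma$ out of the twisted module $\prescript{\psi_B}{B}{M}$ — and verify that they cancel in each homogeneous component. It is cleanest to fix homogeneous representatives throughout and to use the degree constraints forced on the surviving summands (for instance, in the sums above only the basis elements $b$ of a single bidegree, determined by the bidegrees of $\theta$ and $m$, contribute) to collapse the signs rather than carry them symbolically.
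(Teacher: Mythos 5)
Your proposal is correct and follows essentially the same route as the paper: direct verification that \eqref{eq:dual-isom} is an $(A,B)$-bimodule homomorphism (with the Nakayama relation supplying right $B$-equivariance), followed by the two composite computations using the dual-basis identities \eqref{eq:basis-decomp}. Your extra remarks on well-definedness, on the unit identity $\sum_b\tr_B(b^\vee)b=1_B$ combined with $\psi_B(1_B)=1_B$, and on deducing the bimodule property of \eqref{eq:dual-isom-inverse} from invertibility are details the paper leaves implicit, but they do not change the argument.
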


\begin{proof}
  The map~\eqref{eq:dual-isom} is clearly $\F$-linear.  For $a \in A$, $b \in B$, $y \in  \prescript{\psi_B}{B}{M}_A$, and $f \in \HOM^\rL_B (\prescript{\psi_B}{B}{M}_A, \prescript{}{B}{B}_B)$, we have
  \[
    \tr_B \circ (a \cdot f) = (-1)^{\bar a \bar f}\, \tr_B \circ f \circ \pr{a} = (-1)^{\bar a \pi_B} a \cdot (\tr_B \circ f), \quad \text{and}
  \]
  \begin{multline*}
    \tr_B \circ (f \cdot b) (y)
    = (-1)^{\bar b \bar f} \tr_B \circ \pr{b} \circ f (y)
    = (-1)^{\bar b \bar y} \tr_B (f(y)b)
    = (-1)^{\bar b \bar f} \tr_B (\psi_B^{-1}(b)f(y)) \\
    = \tr_B (f(by))
    = \tr_B \circ f \circ \pl{b}(y)
    = ((\tr_B \circ f) \cdot b)(y).
  \end{multline*}
  Thus~\eqref{eq:dual-isom} is a homomorphism of $(A,B)$-bimodules.

  It remains to show that the given maps are mutually inverse.    The map~\eqref{eq:dual-isom} followed by the map~\eqref{eq:dual-isom-inverse} sends $f \in \HOM^\rL_B(\prescript{\psi_B}{B}{M}_A, \prescript{}{B}{B}_B)$ to the map
  \[
    m \mapsto \sum_{b \in \cB} \tr_B (f(b \cdot m)) b^\vee = \sum_{b \in \cB} \tr_B (bf(m)) b^\vee = \sum_{b \in \cB} \langle b, f(m) \rangle b^\vee = f(m).
  \]
  So the map~\eqref{eq:dual-isom} is a right inverse to~\eqref{eq:dual-isom-inverse}.

  On the other hand, the map~\eqref{eq:dual-isom-inverse} followed by~\eqref{eq:dual-isom} sends $\theta \in (\prescript{}{B}{M}_A)^\vee$ to the map
  \[
    m \mapsto \sum_{b \in \cB} \tr_B(\theta(b \cdot m)b^\vee)
    = \sum_{b \in \cB} \theta(b \cdot m) \tr_B(b^\vee) \\
    = \sum_{b \in \cB} \theta(\tr_B(b^\vee) b \cdot m)
    = \theta(m),
  \]
  where the last equality follows from \eqref{eq:basis-decomp}.  Thus \eqref{eq:dual-isom} is also left inverse to \eqref{eq:dual-isom-inverse}.
\end{proof}

Let
\[
  \kappa \colon \Hom^\rL_B \left( \prescript{\psi_B}{B}{A}_A^{\psi_A}, \prescript{}{B}{B}_B \right) \to \{\lambda_B, \pi_B\} \left(\prescript{}{B}{A}_A^{\psi_A} \right)^\vee,\quad \alpha \mapsto \tr_B \circ \alpha,
\]
be the special case of the isomorphism~\eqref{eq:dual-isom} of $(A,B)$-bimodules where $M = \prescript{}{B}{A}_A^{\psi_A}$.

\begin{lem}
  The map
  \[
    \varphi_A \colon \prescript{}{A}{A}_B \to \{\lambda_A, \pi_A\} \left( \prescript{}{B}{A}_A^{\psi_A} \right)^\vee ,\quad \varphi_A(a) = \tr_A \circ \pr{\psi_A(a)},
  \]
  is an isomorphism of $(A,B)$-bimodules.
\end{lem}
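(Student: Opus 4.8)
The plan is to verify in turn that $\varphi_A$ is grading-preserving, that it is a homomorphism of $(A,B)$-bimodules, and that it is bijective. I expect all the real work to lie in the second of these, where one must keep careful track of the signs produced by the dual bimodule structure~\eqref{eq:BA-hom-bimod}, by the grading shift $\{\lambda_A,\pi_A\}$, and by the twist by $\psi_A$ on $\prescript{}{B}{A}_A^{\psi_A}$; the other two steps are short.

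First I would record what $\varphi_A$ actually is. Since $\psi_A$ is an automorphism it is grading-preserving, so $\pr{\psi_A(a)}$ has degree $(|a|,\bar a)$ and, viewing $\tr_A$ as a degree $(-\lambda_A,\pi_A)$ map to $\F$ via~\eqref{eq:Lhom-shift-identification} and~\eqref{eq:Rhom-shift-identification}, the composite $\varphi_A(a)=\tr_A\circ\pr{\psi_A(a)}$ is an $\F$-linear map $A\to\F$ of degree $(|a|-\lambda_A,\bar a+\pi_A)$, i.e.\ an element of $\{\lambda_A,\pi_A\}\left(\prescript{}{B}{A}_A^{\psi_A}\right)^\vee$ of degree $(|a|,\bar a)$. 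Hence $\varphi_A$ is $\F$-linear and grading-preserving, and on homogeneous elements $\varphi_A(a)(m)=(-1)^{\bar a\bar m}\tr_A(m\psi_A(a))$, which is the form I would use below.

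For the bimodule-homomorphism check, on the right I would compute $\varphi_A(ab)(m)=(-1)^{(\bar a+\bar b)\bar m}\tr_A\big(m\,\psi_A(a)\psi_A(b)\big)$ directly, and then expand $(\varphi_A(a)\cdot b)(m)$ using~\eqref{eq:BA-hom-bimod} (the left $B$-action on $\prescript{}{B}{A}_A^{\psi_A}$ being the untwisted one, and the shift contributing no sign for right actions) to get $(-1)^{\bar a(\bar b+\bar m)}\tr_A\big(b\,m\,\psi_A(a)\big)$; the defining relation $\tr_A(a_1a_2)=(-1)^{\bar a_1\bar a_2}\tr_A(a_2\psi_A(a_1))$ of the Nakayama automorphism, applied with $a_1=b$ and $a_2=m\psi_A(a)$, turns the second expression into the first. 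On the left, $\varphi_A(a'a)(m)=(-1)^{(\bar a'+\bar a)\bar m}\tr_A\big(m\,\psi_A(a')\psi_A(a)\big)$, while expanding $a'\cdot\varphi_A(a)$ via~\eqref{eq:BA-hom-bimod} using the right-action operator of the twisted module $\prescript{}{B}{A}_A^{\psi_A}$, together with the sign $(-1)^{\pi_A\bar a'}$ coming from the shift $\{\lambda_A,\pi_A\}$, produces the same thing once all the signs cancel. This is the step I expect to be the main obstacle, but it is purely a sign computation: the only algebraic fact needed is the Nakayama relation.

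Finally, for bijectivity: if $\varphi_A(a)=0$ then, since $\varphi_A$ preserves the grading we may take $a$ homogeneous, and $\tr_A(m\psi_A(a))=0$ for all $m\in A$, so $A\psi_A(a)$ is a left ideal of $A$ lying in $\ker\tr_A$. As $A$ is a Frobenius graded superalgebra, $\ker\tr_A$ contains no nonzero left ideal, whence $A\psi_A(a)=0$, so $\psi_A(a)=0$ and $a=0$; thus $\varphi_A$ is injective. Since $\varphi_A$ is an injective $\F$-linear map and $\dim_\F\prescript{}{A}{A}_B=\dim_\F A=\dim_\F\HOM_\F(A,\F)=\dim_\F\{\lambda_A,\pi_A\}\left(\prescript{}{B}{A}_A^{\psi_A}\right)^\vee<\infty$, it is also surjective, so it is an isomorphism of $(A,B)$-bimodules. (Alternatively, the inverse could be written down explicitly using a pair of $\tr_A$-dual $\F$-bases of $A$, as in the construction of $\cB^\vee$ above.)
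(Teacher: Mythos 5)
Your proposal is correct and follows essentially the same route as the paper: verify $\F$-linearity and the $(A,B)$-bimodule property via the Nakayama relation $\tr_A(a_1a_2)=(-1)^{\bar a_1\bar a_2}\tr_A(a_2\psi_A(a_1))$, then get injectivity from the absence of nonzero left ideals in $\ker\tr_A$ and surjectivity from equality of finite $\F$-dimensions. The only difference is cosmetic: the paper checks the left and right actions in one combined computation $\varphi_A(a\cdot a'\cdot b)(x)=(a\cdot\varphi_A(a')\cdot b)(x)$, while you split them.
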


\begin{proof}
  The map $\varphi_A$ is clearly $\F$-linear.  Now, for $a\in A$, $b \in B$, $a' \in \prescript{}{A}A_B$, and $x \in \prescript{}{B}A_A^{\psi_A}$, we have
  \begin{multline*}
    \varphi_A (a \cdot a' \cdot b) (x)
    = \tr_A \circ \pr{\psi_A(a a' b)} (x)
    = (-1)^{\bar x( \bar a + \bar a' + \bar b)} \tr_A (x\psi_A(aa'b)) \\
    = (-1)^{\bar x (\bar a + \bar a') + \bar b (\bar a + \bar a')} \tr_A (bx \psi_A(a) \psi_A(a'))
    = (-1)^{\bar a (\bar b + \bar a')} \tr_A \circ \pr{\psi_A(a')} \circ \pl{b} \circ \pr{\psi_A(a)} (x) \\
    = (-1)^{\bar a (\bar b + \bar a')} \varphi_A(a') \circ \pl{b} \circ \pr{\psi_A(a)} (x)
    = (a \cdot \varphi_A(a') \cdot b) (x).
  \end{multline*}
  and so $\varphi_A$ is a homomorphism of $(A,B)$-bimodules.  It is injective since $A$ is a Frobenius algebra and hence $\tr_A$ has no nonzero left ideals.  Since the $\F$-dimension of the domain and codomain of $\varphi_A$ are both $\dim_\F A$, it is also surjective.
\end{proof}

\begin{prop} \label{prop:Frobenius-subrings-L2}
  The map
  \[
    \Phi \colon \prescript{}{A}A_B \xrightarrow{\cong}\Hom^\rL_B \left( \prescript{\psi_B}{B}{A}^{\psi_A}_A, \{\lambda_A-\lambda_B, \pi_A + \pi_B\} \prescript{}{B}B_B \right),\quad \Phi = \kappa^{-1} \circ \varphi_A,
  \]
  is an isomorphism of $(A,B)$-bimodules.  The corresponding map
  \[
    \tr \colon \prescript{\psi_B}{B}A^{\psi_A}_B \to \{\lambda_A-\lambda_B, \pi_A + \pi_B\} \prescript{}{B}B_B,\quad \tr(a) = \sum_{b \in \cB} \tr_A(\psi_B(b)a)b^\vee,
  \]
  is a left trace map, i.e., it is a homomorphism of $(B,B)$-bimodules satisfying conditions \ref{L3} and \ref{L4}.
\end{prop}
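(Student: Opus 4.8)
The plan is to exhibit $\Phi$ as a composite of two $(A,B)$-bimodule isomorphisms that are already at hand, to read off the trace-map axioms for $\Phi(1_A)$ directly from Proposition~\ref{prop:left-trace-map}, and then to identify $\Phi(1_A)$ with the displayed formula for $\tr$ by a short computation.

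First I would record that $\kappa$ is an isomorphism of $(A,B)$-bimodules, being the instance $M = \prescript{}{B}{A}_A^{\psi_A}$ of~\eqref{eq:dual-isom}, and that $\varphi_A$ is an isomorphism of $(A,B)$-bimodules by the preceding lemma. To compose their (inverses appropriately), I would absorb a degree shift of $\{\lambda_A-\lambda_B,\pi_A+\pi_B\}$ into $\kappa$: since shifting the second argument of a $\Hom$ agrees with an overall shift (cf.~\eqref{eq:HOM-isom}) and since $2\pi_B = 0$ in $\Z_2$ forces $\{\lambda_A-\lambda_B,\pi_A+\pi_B\}\{\lambda_B,\pi_B\} = \{\lambda_A,\pi_A\}$, this turns $\kappa$ into an $(A,B)$-bimodule isomorphism
\[
  \Hom^\rL_B\left(\prescript{\psi_B}{B}{A}^{\psi_A}_A,\ \{\lambda_A-\lambda_B,\pi_A+\pi_B\}\prescript{}{B}{B}_B\right) \xrightarrow{\ \cong\ } \{\lambda_A,\pi_A\}\left(\prescript{}{B}{A}_A^{\psi_A}\right)^\vee,
\]
whose source is the target of $\Phi$ and whose target is the target of $\varphi_A$. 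Hence $\Phi = \kappa^{-1}\circ\varphi_A$ is an isomorphism of $(A,B)$-bimodules of exactly the asserted type. I expect the only genuinely fiddly (but entirely routine) point of the whole argument to be this degree-shift bookkeeping.

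Next I would invoke Proposition~\ref{prop:left-trace-map}, applied to the ring $A$, the subring $B$, the automorphisms $\psi_A,\psi_B$ in the roles of $\alpha,\beta$, and the degree data $\lambda = \lambda_A-\lambda_B$, $\pi = \pi_A+\pi_B$. Since $\Phi$ is an $(A,B)$-bimodule isomorphism of precisely the form appearing there, that proposition shows immediately that $\Phi(1_A)\colon \prescript{\psi_B}{B}{A}^{\psi_A}_B \to \{\lambda_A-\lambda_B,\pi_A+\pi_B\}\prescript{}{B}{B}_B$ is a homomorphism of $(B,B)$-bimodules satisfying \ref{L3} and \ref{L4}; that is, it is a left trace map. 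So nothing further is required for the trace-map properties once $\Phi$ is in place.

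Finally I would check that $\Phi(1_A)$ is the explicit map $\tr$ in the statement. Since $\pr{1_A} = \id$, we have $\varphi_A(1_A) = \tr_A\circ\pr{\psi_A(1_A)} = \tr_A$. Substituting $\theta = \tr_A$ into the explicit inverse~\eqref{eq:dual-isom-inverse} of $\kappa$ (for $M = \prescript{}{B}{A}_A^{\psi_A}$), and using that the left $B$-action on $\prescript{\psi_B}{B}{A}_A^{\psi_A}$ is $b\cdot a = \psi_B(b)a$, yields
\[
  \Phi(1_A)(a) = \kappa^{-1}(\tr_A)(a) = \sum_{b\in\cB}\tr_A(b\cdot a)\,b^\vee = \sum_{b\in\cB}\tr_A(\psi_B(b)a)\,b^\vee = \tr(a)
\]
for all $a\in A$. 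Thus $\tr = \Phi(1_A)$ is a left trace map, which completes the proof. In short, the proposition is pure assembly: all the content sits in the two preceding lemmas and in Proposition~\ref{prop:left-trace-map}, and only the shift bookkeeping above needs care.
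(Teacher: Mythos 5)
Your proposal is correct and follows essentially the same route as the paper: both deduce the first assertion from the fact that $\kappa$ and $\varphi_A$ are $(A,B)$-bimodule isomorphisms, obtain the trace-map properties from Proposition~\ref{prop:left-trace-map}, and identify $\tr = \Phi(1_A) = \kappa^{-1}(\tr_A)$ via $\varphi_A(1_A)=\tr_A$ and the explicit formula~\eqref{eq:dual-isom-inverse}. Your explicit degree-shift bookkeeping (checking $\{\lambda_A-\lambda_B,\pi_A+\pi_B\}\{\lambda_B,\pi_B\}=\{\lambda_A,\pi_A\}$) is a detail the paper leaves implicit, and it is correct.
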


\begin{proof}
  The first assertion follows immediately from the fact that $\kappa$ and $\varphi_A$ are isomorphisms of $(A,B)$-bimodules.  For the second assertion, note that, for $a \in \prescript{\psi_B}{B}A^{\psi_A}_B$, we have
  \[
    \tr(a) = \Phi(1_A)(a) = (\kappa^{-1} \circ \varphi_A)(1_A)(a) = \kappa^{-1}(\tr_A)(a) = \sum_{b \in \cB} \tr_A(b \cdot a)b^\vee = \sum_{b \in \cB} \tr_A(\psi_B(b)a)b^\vee.
  \]
  That $\tr$ is a homomorphism of $(B,B)$-bimodules satisfying conditions \ref{L3} and \ref{L4} then follows from Proposition~\ref{prop:left-trace-map}.
\end{proof}

\begin{cor} \label{cor:Frobalg-implies-Frobext}
  If $A$ and $B$ are as stated at the beginning of this section, and $A$ is projective as a left $B$-module, then $A$ is a $(\psi_A,\psi_B)$-Frobenius extension of $B$ of degree $(\lambda_B-\lambda_A,\pi_B + \pi_A)$.  In particular, the induction functor $\prescript{}{A}{A}_B \otimes_B -$ is right adjoint to the twisted restriction functor $\{\lambda_B-\lambda_A,\pi_B + \pi_A\} \prescript{\psi_B}{B}{A}^{\psi_A}_A \otimes_A -$.
\end{cor}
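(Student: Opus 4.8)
The plan is to verify conditions \ref{L1} and \ref{L2} of Definition~\ref{def:twisted-Frob-ext} (equivalently, to produce a left trace map, via Corollary~\ref{cor:left-trace-map}) with the data $\alpha = \psi_A$, $\beta = \psi_B$, and $(\lambda,\pi) = (\lambda_A-\lambda_B,\pi_A+\pi_B)$, so that the degree $(-\lambda,\pi)$ of the resulting extension is $(\lambda_B-\lambda_A,\pi_B+\pi_A)$, and then to deduce the adjunction from Theorem~\ref{theo:twisted-adjointness}. Almost all of the substance has already been established in Proposition~\ref{prop:Frobenius-subrings-L2}, so this is essentially an assembly step.

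First I would dispatch \ref{L1}. A Frobenius graded superalgebra is by definition finite-dimensional over $\F$, and $\F\cdot 1_A \subseteq B \subseteq A$, so $A$ is finitely generated already as an $\F$-module and hence as a left $B$-module; projectivity of $\prescript{}{B}A$ is exactly the standing hypothesis of the corollary. Thus \ref{L1} holds. Next, Proposition~\ref{prop:Frobenius-subrings-L2} provides a homomorphism of $(B,B)$-bimodules
\[
  \tr \colon \prescript{\psi_B}{B}A^{\psi_A}_B \to \{\lambda_A-\lambda_B,\pi_A+\pi_B\}\,\prescript{}{B}B_B
\]
satisfying \ref{L3} and \ref{L4}, i.e.\ a left $(\psi_A,\psi_B)$-trace map of degree $(\lambda_B-\lambda_A,\pi_B+\pi_A)$. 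Since \ref{L1} also holds, Corollary~\ref{cor:left-trace-map} immediately yields that $A$ is a $(\psi_A,\psi_B)$-Frobenius extension of $B$ of degree $(\lambda_B-\lambda_A,\pi_B+\pi_A)$. (Alternatively, one can skip the trace reformulation: the $(A,B)$-bimodule isomorphism $\Phi$ produced in Proposition~\ref{prop:Frobenius-subrings-L2} is precisely condition \ref{L2} for these data, which together with \ref{L1} is Definition~\ref{def:twisted-Frob-ext}.)

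For the ``in particular'' clause, I would apply Theorem~\ref{theo:twisted-adjointness} with $\alpha=\psi_A$, $\beta=\psi_B$, $\lambda=\lambda_A-\lambda_B$, $\pi=\pi_A+\pi_B$: having just shown that $A$ is an $(\psi_A,\psi_B)$-Frobenius extension of $B$ of degree $(-\lambda,\pi)$, the theorem gives that $\prescript{}{A}A_B\otimes_B-$ is right adjoint to $\{-\lambda,\pi\}\,\prescript{\psi_B}{B}A^{\psi_A}_A\otimes_A- = \{\lambda_B-\lambda_A,\pi_B+\pi_A\}\,\prescript{\psi_B}{B}A^{\psi_A}_A\otimes_A-$, which is the asserted twisted restriction functor.

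I do not expect a real obstacle here, since the content lives in the chain of lemmas culminating in Proposition~\ref{prop:Frobenius-subrings-L2} and in Theorem~\ref{theo:twisted-adjointness}. The only points that warrant a word of care are the finite-generation half of \ref{L1} (justified by finite-dimensionality of $A$ over $\F$) and bookkeeping of the degree conventions — in particular the sign flip between ``degree $(-\lambda,\pi)$'' in Definition~\ref{def:twisted-Frob-ext} and the degree $(\lambda_B-\lambda_A,\pi_B+\pi_A)$ recorded in the statement, together with the fact that $\pi_A+\pi_B=\pi_B+\pi_A$ in $\Z_2$.
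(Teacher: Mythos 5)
Your proposal is correct and follows essentially the same route as the paper: verify \ref{L1} from finite-dimensionality plus the projectivity hypothesis, obtain \ref{L2} (equivalently the trace map) from Proposition~\ref{prop:Frobenius-subrings-L2}, and invoke Theorem~\ref{theo:twisted-adjointness} for the adjunction. Your parenthetical alternative of using the isomorphism $\Phi$ directly for \ref{L2} is exactly what the paper does, and your degree bookkeeping matches.
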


\begin{proof}
  Since $A$ is finite dimensional, it is finitely generated as a left $B$-module.  Thus, the assumption that $A$ is also projective as a left $B$-module implies that condition \ref{L1} is satisfied.  Then Proposition~\ref{prop:Frobenius-subrings-L2} implies that condition \ref{L2} is also satisfied.  The final assertion follows immediately from Theorem~\ref{theo:twisted-adjointness}.
\end{proof}

\begin{rem} \label{rem:non-proj-eg}
  Note that a Frobenius algebra is not always projective over a Frobenius subalgebra.  For instance, if $A = \C[x]/x^3\C[x]$ and $B$ is the unital subalgebra generated by $x + x^3 \C[x]$, then it is easily seen that $A$ and $B$ are both Frobenius algebras, but that $A$ is not projective as a $B$-module.
\end{rem}

We conclude with an example of a twisted Frobenius extension that is not a Frobenius extension, nor even a Frobenius extension of the 2nd kind in the sense of \cite{NT60}.

\begin{eg}[Nilcoxeter algebras]\label{eg:Nilcoxeter}
  Fix a nonnegative integer $n$.  The \emph{nilcoxeter algebra} $N_n$ is the unital $\F$-algebra generated by $u_1,\dotsc,u_{n-1}$, subject to the relations
  \begin{gather*}
    u_i^2=0 \text{ for } i=1,2,\dotsc,n-1, \\
    u_i u_j=u_j u_i \text{ for } i,j=1,\dots,n-1 \text{ such that } |i-j|>1, \\
    u_i u_{i+1} u_i=u_{i+1} u_i u_{i+1} \text{ for } i=1,2,\dotsc,n-2.
  \end{gather*}
  By convention, we set $N_0=N_1=\F$ and consider each generator $u_i$ as being of degree $(1,1)$.

  Note that any element in the ideal generated by $u_1,\dotsc, u_{n-1}$ is a zero divisor, and hence any homogeneous invertible element in $N_n$ must be a multiple of the unit element.  For any $n$, the nilcoxeter algebra $N_n$ is a Frobenius graded superalgebra over $\F$ of degree $\left( -{n \choose 2}, {n \choose 2} \right)$, with Nakayama automorphism given on the generators by $\psi_n(u_i)=u_{n-i}$ (see, for example, \cite[Lemma 8.2]{RS15}).  In \cite[Prop.~4]{Kho01} it is shown that $N_{n}$ is free as a left $N_{n-1}$-module, and it follows by induction that $N_n$ is free as a left $N_m$-module for any $m<n$.  Thus, by Corollary \ref{cor:Frobalg-implies-Frobext}, if $m<n$, then the canonical injection $N_m \hookrightarrow N_n$ gives a $(\psi_n,\psi_m)$-Frobenius extension of degree $\left( {m \choose 2} - {n \choose 2} , {m \choose 2} + {n \choose 2} \right)$.  We claim that this extension is not an $(\id_A,\beta)$-Frobenius for any automorphism $\beta$ of $N_m$.  Indeed, suppose such a $\beta$ existed.  Then by Proposition \ref{prop:Frob-data-uniqueness}, for every $b\in B$, we have
  \[
    \psi_n(b)=\beta^{-1} \circ \psi_m(b)
  \]
  since conjugation by homogeneous invertible elements in $N_n$ is trivial.  If we take $b=u_1$, this gives
  \[
    u_{n-1}=\beta^{-1}(u_{m-1}),
  \]
  which is a contradiction since $u_{n-1}\notin N_m$.
\end{eg}



\newcommand{\arxiv}[1]{\href{http://arxiv.org/abs/#1}{\tt
  arXiv:\nolinkurl{#1}}}

\end{document}